\newcommand{\ep}{\varepsilon}
\newcommand{\E}{\mathbb{E}}
\newcommand{\R}{\mathbb{R}}
\newcommand{\supp}{\textnormal{supp}}
\newcommand{\bitem}{\begin{itemize}}
\newcommand{\eitem}{\end{itemize}}
\newcommand{\benum}{\begin{enumerate}}
\newcommand{\eenum}{\end{enumerate}}
\newcommand{\beq}{\begin{equation}}
\newcommand{\eeq}{\end{equation}}
\newcommand{\beqs}{\begin{equation*}}
\newcommand{\eeqs}{\end{equation*}}
\newtheorem{theorem}{Theorem}[section]
\newtheorem{lem}[theorem]{Lemma}
\newtheorem{thm}{Theorem}[section]
\newtheorem{prop}[thm]{Proposition}
\theoremstyle{definition}
\begin{document}
\title{Optimal prediction in the linearly transformed spiked model}
\author{Edgar Dobriban, William Leeb, and Amit Singer}
\maketitle
\abstract{
We consider the \emph{linearly transformed spiked model}, where observations $Y_i$ are noisy linear transforms of unobserved signals of interest $X_i$: 
\begin{align*}
    Y_i = A_i X_i + \varepsilon_i,
\end{align*}
for $i=1,\ldots,n$. The transform matrices $A_i$ are also observed.  We model $X_i$ as random vectors lying on an unknown low-dimensional space.  How should we predict the unobserved signals (regression coefficients) $X_i$?

The naive approach of performing regression for each observation separately is inaccurate due to the large noise. Instead, we develop optimal linear empirical Bayes methods for predicting $X_i$ by ``borrowing strength'' across the different samples. Our methods are applicable to large datasets and rely on weak moment assumptions.  The analysis is based on random matrix theory.

We discuss applications to signal processing, deconvolution, cryo-electron microscopy, and missing data in the high-noise regime. For missing data, we show in simulations that our methods are faster, more robust to noise and to unequal sampling than well-known matrix completion methods.

}

\section{Introduction}

In this paper we study the \emph{linearly transformed spiked model}, where the observed data vectors $Y_i$ are noisy linear transforms of unobserved signals of interest $X_i$:
$$
    Y_i = A_i X_i + \varepsilon_i, \,\,\, i=1,\ldots,n.
$$
We also observe the transform matrices $A_i$. A transform matrix reduces the dimension of the signal $X_i \in \R^p$ to a possibly observation-dependent dimension $q_i \le p$, thus $A_i \in \mathbb{R}^{q_i \times p}$. Moreover, the signals are assumed to be random vectors lying on an unknown low-dimensional space, an assumption sometimes known as a spiked model \citep{johnstone2001distribution}.

Our main goal is to recover (estimate or predict) the unobserved signals $X_i$. The problem arises in many applications, some of which are discussed in the next section. Recovery is challenging due to the two different sources of information loss: First, the transform matrices $A_i$ reduce the dimension, since they are generally not invertible. It is crucial that the transform matrices differ between observations, as this allows us to reconstruct this lost information from different ``snapshots'' of $X_i$. Second, the observations are contaminated with additive noise $\ep_i$. We study the regime where the size of the noise is much larger than the size of the signal. This necessitates methods that are not only numerically stable, but also reduce the noise significantly.

This setup can be viewed as a different linear regression problem for each sample $i=1,\ldots,n$, with outcome vector $Y_i$ and covariate matrix $A_i$. The goal is then to estimate the regression coefficients $X_i$. Since $X_i$ are random, this is also a random effects model. Our specific setting, with low-rank $X_i$, is more commonly considered in spiked models, and we will call $X_i$ the \emph{signals}.

This paper assumes that the matrices $A_i^\top A_i \in \mathbb{R}^p$ are diagonal. Equivalently, we assume that the matrices $A_i^\top A_i$ all commute (and so can be jointly diagonalized). We will refer to this as the \emph{commutative} model. This is mainly a technical assumption and we will see that it holds in many applications.

With large noise, predicting one $X_i$ using one $Y_i$ alone has low accuracy. Instead, our methods predict $X_i$ by ``borrowing strength'' across the different samples. For this we model $X_i$ as random vectors lying on an unknown low-dimensional space, which is reasonable in many applications. Thus our methods are a type of empirical Bayes methods \citep{efron2012large}.

Our methods are fast and applicable to big data, rely on weak distributional assumptions (only using moments), are robust to high levels of noise, and have certain statistical optimality results. Our analysis is based on recent insights from random matrix theory, a rapidly developing area of mathematics with many applications to statistics \citep[e.g.,][]{bai2009spectral, paul2014random, yao2015large}.

\subsection{Motivation}
\label{sec-motivation}

We study the linearly transformed model motivated by its wide applicability to several important data analysis scenarios.

\subsubsection{PCA and spiked model}

In the well-known \emph{spiked model} one observes data $Y_i$ of the form $Y_i  = X_i + \ep_i$, where $X_i \in \R^p $ are unobserved signals lying on an unknown low dimensional space, and $\ep_i  \in \R^p $ is noise. With $A_i=I_p$ for all $i$, this is a special case of the commutative linearly transformed spiked model. 

The spiked model is fundamental for understanding principal component analysis (PCA), and has been thoroughly studied under high-dimensional asymptotics. Its understanding will serve as a baseline in our study. Among the many references, see for instance \cite{johnstone2001distribution, baik2005phase, baik2006eigenvalues, paul2007asymptotics, nadakuditi2008sample, nadler2008finite, bai2012estimation, bai2012sample, benaych2012singular, onatski2012asymptotics, onatski2013asymptotic, donoho2013optimal, onatski2014signal, nadakuditi2014optshrink, gavish2014optimal, johnstone2015testing, hachem2015survey}.

\subsubsection{Noisy deconvolution in signal processing}

The transformed spiked model is broadly relevant in signal acquisition and imaging. Measurement and imaging devices nearly never measure the ``true'' values of a signal. Rather, they measure a weighted average of the signal over a small window in time and/or space. Often, this local averaging can be modeled as the application of a convolution filter. For example, any time-invariant recording device in signal processing is modeled by a convolution \citep{mallat2008wavelet}. Similarly, the blur induced by an imaging device can be modeled as convolution with a function, such as a Gaussian \citep{blackledge2006digital,campisi2016blind}. In general, this filter will not be numerically invertible.

As is well-known, any convolution filter $A_i$ is linear and diagonal in the Fourier basis; for example, see \cite{stein2011fourier}. Consequently, $A_i^\top A_i$ is also diagonalized by the Fourier basis. Convolutions thus provide a rich source of examples of the linearly transformed spiked model.

\subsubsection{Cryo-electron microscopy (cryo-EM)}

Cryo - electron microscopy (cryo-EM) is an experimental method for mapping the structure of molecules. It allows imaging of heterogeneous samples, with mixtures  or multiple conformations of molecules. This method has received a great deal of recent interest, and has recently led to the successful mapping of important molecules \citep[e.g.,][]{bai2015cryo, callaway2015revolution}. 

Cryo-EM works by rapidly freezing a collection of molecules in a layer of thin ice, and firing an electron beam through the ice to produce two-dimensional images. The resulting observations can be modeled as $Y_i = A_i X_i + \ep_i$, where $X_i$ represents an unknown 3D molecule; $A_i$ randomly rotates the molecule, projects it onto the xy-plane, and applies blur to the resulting image; and $\ep_i$ is noise \citep{katsevich2015covariance}. Since a low electron dose is used to avoid destroying the molecule, the images are typically very noisy.

When all the molecules in the batch are identical, i.e.\ $X_i = X$ for all $i$, the task of \emph{ab-initio 3D reconstruction} is to recover the 3D molecule $X$ from the noisy and blurred projections $Y_i$ \citep{Kam1980}. Even more challenging is the problem of \textit{heterogeneity}, in which several different molecules, or one molecule in different conformations, are observed together, without labels. The unseen molecules can usually be assumed to lie on some unknown low-dimensional space  \citep{katsevich2015covariance, anden2015covariance}. Cryo-EM observations thus fit the linearly transformed spiked model.

The noisy deconvolution problem mentioned above is also encountered in cryo-EM. The operators $A_i$ induce blur by convolution with a point-spread function (PSF), thus denoising leads to improved 3D reconstruction \citep{bhamre2016denoising}. The Fourier transform of the point-spread function is called the \emph{contrast transfer function (CTF)}, and the problem of removing its effects from an image is known as \emph{CTF correction}.

\subsubsection{Missing data}

Missing data can be modeled by \emph{coordinate selection operators} $A_i$, such that $A_i(k,l) = 1$ if the $k$-th coordinate selected by $A_i$ is $l$, and $A_i(k,l) = 0$ otherwise. Thus $A_i^\top A_i$ are diagonal with 0/1 entries indicating missing/observed coordinates.  In the low-noise regime, missing data in matrices has recently been studied under the name of \emph{matrix completion} \citep[e.g.,][]{candes2009exact, candes2010power, keshavan2009matrix, keshavan2010matrix, koltchinskii2011nuclear, negahban2011estimation, recht2011simpler, rohde2011estimation, jain2013}. As we discuss later, our methods perform well in the high-noise setting of this problem.

\subsection{Our contributions}

Our main contribution is to develop general methods predicting $X_i$ in linearly transformed spiked models $Y_i = A_i X_i + \ep_i$. We develop methods that are fast and applicable to big data, rely on weak moment assumptions, are robust to high levels of noise, and have certain optimality properties.

Our general approach is as follows:  We model $X_i$ as random vectors lying on an unknown low-dimensional space, $X_i = \sum_{k=1}^r \ell_k^{1/2} z_{ik} u_k$ for fixed unit vectors $u_k$ and mean-zero scalar random variables $z_{ik}$, as usual in spiked models. In this model, the Best Linear Predictor (BLP), also known as the Best Linear Unbiased Predictor (BLUP), of $X_i$ given $Y_i$ is well known \citep{searle2009variance}. (The more well known Best Linear Unbiased Estimator (BLUE) is defined for fixed-effects models where $X_i$ are non-random parameters.) The BLP depends on the unknown population principal components $u_k$. In addition, it has a complicated form involving matrix inversion.

Our contributions are then: 

\benum
\item We show that the BLP reduces to a simpler form in a certain natural high-dimensional model where $n,p\to\infty$ such that $p/n\to\gamma>0$ (Sec.\ \ref{unif_mod}). In this simpler form, we can estimate the population principal components using the principal components (PCs) of the \emph{backprojected data} $A_i^\top Y_i$ to obtain an Empirical BLP (EBLP) predictor (a type of moment-based empirical Bayes method), known up to some scaling coefficients. By an exchangeability argument, we show that the optimal scaling coefficients are the same as optimal singular value shrinkage coefficients for a certain novel random matrix model (Sec.\ \ref{red_sv}). 

\item We derive the asymptotically optimal singular value shrinkage coefficients (Sec.\ \ref{deriv_opt}), by characterizing the spectrum of the backprojected data matrix (Sec.\ \ref{spec_bp}). This is our main technical contribution.

 
\item We derive a suitable ``normalization'' method to make our method fully implementable in practice (Sec.\ \ref{Normalization}). This allows us to estimate the optimal shrinkage coefficients consistently, and to use well-known optimal shrinkage methods \citep{nadakuditi2014optshrink, gavish2014optimal}. We also discuss how to estimate the rank (Sec. \ref{sec-rk}).

\item We also solve the out-of-sample prediction problem, where new $Y_0,A_0$ are observed, and $X_0$ is predicted using the existing data (Sec.\ \ref{oos_pred_alg}). 

\item We compare our methods to existing approaches for the special case of missing data problems via simulations (Sec.\ \ref{mx_comp}). These are reproducible with code provided on Github at \url{https://github.com/wleeb/opt-pred}.
\eenum


%
%
%
%

\section{Empirical linear prediction}
\label{emp_lin}

\subsection{The method}
\label{in_sample_alg}

Our method is simple to state using elementary linear algebra. We give the steps here for convenience. In subsequent sections, we will explain each step, and prove the optimality of this procedure over a certain class of predictors. Our method has the following steps:

\begin{enumerate}

\item \emph{Input}: Noisy linearly transformed observations $Y_i$, and transform matrices $A_i$, for $i=1,\ldots,n$. Preliminary rank estimate $r$ (see Sec. \ref{sec-rk} for discussion).

\item Form backprojected data matrix $B = [A_1^\top Y_1,\dots,A_n^\top Y_n]^\top$ and diagonal normalization matrix $\hat{M} = n^{-1/2} \sum_{i=1}^n A_i^\top A_i$. Form the normalized, backprojected data matrix $\tilde{B} = B \hat{M}^{-1}$.

\item \label{step-whiten}

 \emph{(Optional)}
Multiply $\tilde{B}$ by a diagonal whitening matrix $W$, \mbox{$\tilde{B} \leftarrow \tilde{B}W$}. The definition of $W$ is given in Sec.\ \ref{sec-est-W}.

\item Compute the singular values  $\sigma_k$ and the top $r$ singular vectors $ \hat u_k, \hat v_k$ of the matrix $\tilde B$.

\item Compute $\hat X = (\hat X_1,\ldots,\hat X_n)^\top = \sum_{k=1}^r \hat \lambda_k  \hat u_k \hat{v}_k^\top$.

Here $\hat\lambda_k$ are computed according to Sec.\ \ref{deriv_opt}: $\hat\lambda_k = \hat \ell_k^{1/2} \hat c_k \hat{\tilde{c}}_k$, where $\hat \ell_k, \hat c_k, \hat{\tilde{c}}_k$ are estimated based on the formulas given in Theorem \ref{spec} by plug-in. Specifically, $\hat \ell_k = 1/\hat D(\sigma_k^2)$,  $\hat c_k^2 = \hat m(\sigma_k^2) / [\hat D'(\sigma_k^2) \hat \ell_k]$, $ \hat{\tilde{c}}_k^2 = \hat{ \underline {m}}(\sigma_k^2) / [\hat D'(\sigma_k^2) \hat \ell_k]$, where $\hat m, \hat{ \underline {m}}, \hat D, \hat D'$ are the plug-in estimators of the Stieltjes-transform-like functionals of the spectral distribution, using the bottom $\min(n,p)-r$ eigenvalues of the sample covariance matrix of the backprojected data. For instance, $\hat m$ is given in equation \eqref{st_plugin} (assuming $p\le n$): 
$$
\hat{m}(x) = \frac{1}{p-r}\sum_{k=r+1}^p \frac{1}{\sigma_k^2 -x}.
$$

\item If whitening was performed (Step \ref{step-whiten}), unwhiten the data, \mbox{$\hat{X} \leftarrow \hat{X}W^{-1}$}.

\item \emph{Output}: Predictions $\hat X_i$ for $X_i$,  for $i=1,\ldots,n$. 

\end{enumerate}

The complexity of the method is dominated by computing the singular value spectrum of the backprojected matrix, which takes $O(\min(n,p)^2 \cdot \max(n,p))$ floating point operations.
As we will show in Sec.\ \ref{W_norm}, by choosing a certain whitening matrix $W$, the algorithm will only require computing the top $r$ singular vectors and values of the backprojected data matrix, and so can typically be performed at an even lower cost using, for example, the Lanczos algorithm \citep{golub2012matrix}, especially when there is a low cost of applying the matrix $\tilde{B}$ to a vector.

\subsection{Motivation I: from BLP to EBLP}

We now explain the steps of our method. We will use the mean-squared error $\mathbb{E}\|\hat{X}_i - X_i\|^2$ to assess the quality of a predictor $\hat{X}_i$. Recall that we modeled the signals as $X_i = \sum_{k=1}^r \ell_k^{1/2} z_{ik} u_k$. It is well known in random effects models \citep[e.g.,][]{searle2009variance} that the best linear predictor, or BLP, of one signal $X_i$ using $Y_i$, is:
\beq
\label{blp}
    \hat{X}_i^{BLP} = \Sigma_X A_i^\top 
        (A_i \Sigma_X A_i^\top + \Sigma_{\varepsilon})^{-1} Y_i.
\eeq
Here, $\Sigma_X= \sum_{k=1}^r \ell_k u_k u_k^\top $ denotes the covariance matrix of one $X_i$, and $\Sigma_\varepsilon$ is the covariance matrix of the noise $\varepsilon_i$. These are unknown parameters, so we need to estimate them in order to get a \emph{bona fide} predictor. Moreover, though $A_i$ are fixed parameters here, we will take them to be random later.

We are interested in the ``high-dimensional'' asymptotic regime, where the dimension $p$ grows proportionally to the number of samples $n$; that is, $p = p(n)$ and $\lim_{n \to \infty} p(n) / n = \gamma > 0$. 
In this setting it is in general not possible to estimate the population covariance $\Sigma_X$ consistently. Therefore, we focus our attention on alternate methods derived from the BLP.

The BLP involves the inverse of a matrix, which makes it hard to analyze.
However,  for certain \emph{uniform models} (see Sec.\ \ref{unif_mod} for a precise definition), we can show that the BLP is asymptotically equivalent to a simpler linear predictor not involving a matrix inverse: 

$$
    \hat{X}_i^{0} = \sum_{k=1}^r \eta_k^{0} \langle A_i^\top Y_i,u_k\rangle u_k.
$$
Here $\eta_k^0$ are certain constants given in Sec.\ \ref{unif_mod}. This simple form of the BLP       
%
%
%
guides our choice of predictor when the true PCs are not known. Let $\hat{u}_1,\dots,\hat{u}_r$ be the empirical PCs; that is, the top eigenvectors of the sample covariance $\sum_{i=1}^n (A_i^\top Y_i)(A_i^\top Y_i)^\top / n$, or equivalently, the top left singular vectors of the matrix $[A_1^\top Y_1,\dots,A_n^\top Y_n]^\top$. For coefficients $\eta = (\eta_1,\dots,\eta_r)$, substituting $\hat{u}_k$ for $u_k$ leads us to the following \emph{empirical linear predictor}:
$$
    \hat{X}_i^\eta = \sum_{k=1}^r \eta_k \langle A_i^\top Y_i,\hat{u}_k\rangle \hat{u}_k.
$$

Note that, since the empirical PCs $\hat{u}_k$ are used in place of the population PCs $u_k$, the coefficients $\eta_k$ defining the BLP are no longer optimal, and must be adjusted downwards to account for the non-zero angle between $u_k$ and $\hat{u}_k$. This phenomenon was studied in the context of the ordinary spiked model in \cite{singer2013two}.

\subsection{Motivation II: Singular value shrinkage}
\label{red_sv}

Starting with BLP and replacing the unknown population PCs $u_k$ with their empirical counterparts $\hat{u}_k$, we were lead to a predictor of the form $ \hat{X}_i^\eta = \sum_{k=1}^r  \eta_k \langle B_i , \hat{u}_k \rangle \hat{u}_k$, where $ B_i  = A_i^\top Y_i$ are the backprojected data. Now, the matrix $\hat{X}^\eta = [\hat{X}_1^\eta,\dots,\hat{X}_n^\eta]^\top$ has the form
\begin{align}
\label{X_eta}
    \hat{X}^\eta = \sum_{k=1}^r \eta_k \cdot B\hat{u}_k \hat{u}_k^\top 
                 = \sum_{k=1}^r \eta_k\sigma_k(B)  \cdot \hat{v}_k\hat u_k ^\top.
\end{align}
This has the same singular vectors as the matrix $B = [B_1,\dots,B_n]^\top$ of backprojected data. 

From now on, we will consider the $A_i$ as random variables, which corresponds to an average-case analysis over their variability. Then observe that the predictors $\hat X_i^\eta$ are exchangeable random variables with respect to the randomness in $A_i,\ep_i$, because they depend symmetrically on the data matrix $B$. Therefore, the prediction error for a sample equals the average prediction error over all $X_i$, which is the normalized Frobenius norm for predicting the matrix $X = (X_1,\ldots,X_n)^\top$:
$$
    \mathbb{E}\| \hat{X}_i^{\eta} - X_i \|^2 
    = \frac{1}{n} \mathbb{E} \| \hat{X}^\eta - X\|_F^2.
$$

Therefore, the empirical linear predictors are equivalent to performing singular value shrinkage of the matrix $B$ to estimate $X$.  That is, singular value shrinkage predictors are in one-to-one correspondence with the in-sample empirical linear predictors. 
%
%
Because singular value shrinkage is minimax optimal for matrix denoising problems with Gaussian white noise \citep{donoho2014minimax}, it is a natural choice of predictor in the more general setting we consider in this paper, where an optimal denoiser is not known.


%
%
%

\subsection{The class of predictors: shrinkers of normalized, backprojected data}
\label{Normalization}

Motivated by the previous two sections, we are led to singular value shrinkage predictors of the matrix $X$. However, it turns out that rather than shrink the singular values of the matrix $B$ of backprojected data $A_i^\top Y_i$, it is more natural to work instead with the matrix $\tilde{B}$ with rows $\tilde{B}_i = M^{-1} A_i^\top Y_i$, where $M = \E A_i^\top A_i$ is a diagonal normalization matrix. We will show later that we can use a sample estimate of $M$.

The heuristic to explain this is that we can write $A_i^\top A_i = M + E_i$, where $E_i$ is a mean zero diagonal matrix. We will show in the proof of Thm.\ \ref{spec} that because the matrices $A_i^\top A_i$ commute, the matrix with rows $E_i X_i / \sqrt{n}$ has operator norm that vanishes in the high-dimensional limit $p/n \to \gamma$. Consequently, we can write:
\begin{align*}
    B_i = A_i^\top Y_i = M X_i + A_i^\top \varepsilon_i + E_i X_i 
                 \sim \underbrace{M X_i}_{signal} 
                        +  \underbrace{A_i^\top \varepsilon_i}_{noise}
\end{align*}
Since $X_i$ lies in an $r$-dimensional subspace, spanned by $u_1,\dots,u_r$, $M X_i$ also lies in the $r$-dimensional subspace spanned by $M u_1,\dots,M u_r$. Furthermore, $A_i^\top \varepsilon_i$ is mean-zero and independent of $M X_i$. Consequently, $A_i^\top Y_i$ looks like a spiked model, with signal $M X_i$ and noise $A_i^\top \ep_i$.

Shrinkage of this matrix will produce a predictor of $M X_i$, not $X_i$ itself. However, multiplying the data by $M^{-1}$ fixes this problem: we obtain the approximation:
\begin{align*}
    \tilde{B}_i = M^{-1} A_i^\top Y_i
                 \sim X_i +  \underbrace{M^{-1} A_i^\top \varepsilon_i}_{noise}.
\end{align*}
After this normalization, the target signal of any shrinker becomes the true signal $X_i$ itself.

Motivated by these considerations, we can finally state the class of problems we study. We consider predictors of the form:
\begin{align*}
    \hat X_i^\eta = \sum_{k=1}^r \eta_k \langle \tilde B_i , \hat u_k \rangle \hat u_k 
\end{align*}
where $\tilde{B}_i = M^{-1} A_i^\top Y_i$, and we seek the AMSE-optimal coefficients $\eta_k^*$ in the high-dimensional limit $p/n \to \gamma$; that is, our goal is to find the optimal coefficients $\eta_k$, minimizing the AMSE:
$$
    \eta^* = \operatorname*{arg\,min}_\eta 
             \lim_{p,n\to\infty}\mathbb{E} \|\hat{X}_i^\eta - X_i\|^2.
$$
We will show that the limit exists. The corresponding estimator $\hat{X}_i^{\eta^*}$ will be called the \emph{empirical best linear predictor (EBLP)}. We will: (1) show that it is well-defined; (2) derive the optimal choice of $\eta_k$; (3) derive consistent estimators of the optimal $\eta_k$; and (4) derive consistently estimable formulas for the AMSE. As before, finding the optimal $\eta_k$ is equivalent to performing optimal singular value shrinkage on the matrix $\tilde{B} = [\tilde{B}_1,\dots,\tilde{B}_n]^\top$.

\section{Derivation of the optimal coefficients}
\label{deriv_opt}

As described in Sec.\ \ref{emp_lin}, we wish to find the AMSE-optimal coefficients $\eta_k$ for predictors of the form $ \hat{X}_i^\eta = \sum_{k=1}^r  \eta_k \langle \tilde{B}_i , \hat{u}_k \rangle \hat{u}_k$, where $\tilde{B}_i = M^{-1} A_i^\top Y_i$ is the normalized, backprojected data. Equivalently, we find the optimal singular values of the matrix with the same singular vectors as $\tilde{B} = [\tilde{B}_1,\dots,\tilde{B}_n]^\top$.

Singular value shrinkage has been the subject of a lot of recent research. It is now well known that optimal singular value shrinkage depends on the asymptotic spectrum of the data matrix $\tilde{B}$ \cite[e.g.,][]{nadakuditi2014optshrink,gavish-donoho-2017}. We now fully characterize the spectrum, and use it to derive the optimal singular values. We then show that by estimating the optimal singular values by plug-in, we get the method described in Sec.\ \ref{in_sample_alg}.

\subsection{The asymptotic spectral theory of the back-projected data}
\label{spec_bp}
The main theorem characterizes the asymptotic spectral theory of the normalized backprojected data matrix $\tilde B = B M^{-1}$, and of the unnormalized version $B = [A_1^\top Y_1,\dots,A_n^\top Y_n]^\top$. Our data are iid samples of the form $Y_i = A_i X_i + \varepsilon_i$.

We assume that the signals have the form $X_i= \sum_{k=1}^r \ell_k^{1/2} z_{ik} u_k$. Here $u_k$ are deterministic signal directions with $\|u_k\|=1$. We will assume that $u_k$ are delocalized, so that $|u_{k}|_{\infty} \le C_p$ for some constants $C_p\to 0$ that we will specify later.  The scalars $z_{ik}$ are standardized independent random variables, specifying the variation in signal strength from sample to sample. For simplicity we assume that the deterministic spike strengths are different and sorted: $\ell_1>\ell_2>\ldots>\ell_r>0$. 

For a distribution $H$, let $F_{\gamma,H}$ denote the generalized Marchenko-Pastur distribution induced by $H$ with aspect ratio $\gamma$ \citep{marchenko1967distribution}. Closely related to $F_{\gamma,H}$ is the so-called \emph{companion distribution} $\underline F_{\gamma,H}(x)  = \gamma F_{\gamma,H}(x)  + (1-\gamma)\delta_0$. We will also need the Stieltjes transform $m_{\gamma,H}$ of $F_{\gamma,H}$, $m_{\gamma,H}(z)  = \int (x-z)^{-1} d F_{\gamma,H}(x)$, and the Stieltjes transform $\underline m_{\gamma,H}$ of $\underline F_{\gamma,H}$.  Based on these, one can define the D-transform of $F_{\gamma,H}$ by 
\[D_{\gamma,H}(x) = x\cdot m_{\gamma,H}(x) \cdot \underline m_{\gamma,H}(x).\] 
Up to the change of variables $x = y^2$, this agrees with the D-transform defined in \cite{benaych2012singular}.  Let $b^2:=b_H^2$ be the supremum of the support of $F_{\gamma,H}$, and $D_{\gamma,H}(b_H^2) = \lim_{t\downarrow b} D_{\gamma,H}(t^2)$. It is easy to see that this limit is well defined, and is either finite or $+\infty$.

We will assume the following conditions:
\begin{enumerate}

\item {\bf Commutativity condition}. \label{aaa111}
The matrices $A_i^\top A_i$ commute with each other. Equivalently, they are jointly diagonal in some known basis. For simplicity of notation, we will assume without loss of generality that the $A_i^\top A_i$ are diagonal.


\item {\bf Backprojected noise}. \label{aaa222}
The vectors $\ep_i^*= A_i^\top \varepsilon_i$ have independent entries of mean zero. If $H_p$ is the distribution function of the variances of the entries of $M^{-1} \ep_i^*$, then $H_p$ is bounded away from zero; and $H_p \Rightarrow H$ almost surely, where $H$ is a compactly supported distribution.

\item {\bf Maximal noise variance}.
The supremum of the support of $H_p$ converges almost surely to the upper edge of the support of $H$.

\item {\bf Noise moments}.
$\mathbb{E}|\ep^*_{ij}|^{6+\phi}<C$, $\mathbb{E} |E_{ij}|^{6+\phi}<C$ (recall that we defined $E_i = A_i^\top A_i - M$).

\item {\bf Signal.}\label{aaa0}
 One of the following two assumptions holds for the signal directions $u_k$ and signal coefficients $z_{ij}$: 

\begin{itemize}
\item {\bf Polynomial moments and delocalization}. Suppose $\E|z_{ij}|^m \le C < \infty$ for some $m>4$ and for all $k$
$$\| u_k\|_{\infty} \cdot p^{(2+c)/m} \to_{a.s} 0$$
for some $c>0$.
\item {\bf Exponential moments and logarithmic delocalization}. Suppose the $z_{ij}$ are sub-gaussian in the sense that $\E \exp(t |z_{ij}|^2) \le C$ for some $t>0$ and  $C < \infty$, and that  for all $k$
$$\| u_k\|_{\infty} \cdot \sqrt{\log p} \to_{a.s} 0. $$
\end{itemize}

\item {\bf Generic signal.}
\label{aaa}
Let $P$ be the diagonal matrix with $P_{jj} = \text{Var}[M_j^{-1} \ep^*_{ij}]$, where $M_j$ are the diagonal entries of the diagonal matrix  $M = \E A_i^\top A_i$. Then $u_j$ are \emph{generic} with respect to $P$, in the sense that there are some constants $\tau_k>0$ such that:
$$
    u_j^\top (P - zI_p)^{-1} u_k \to I(j = k)\cdot \tau_k\cdot m_H(z)
$$
for all $z \in \mathbb{C}^+$.

\end{enumerate}

Before stating the main results, we make a few remarks on these assumptions. Assumption \ref{aaa111} holds for many applications, as discussed in Sec.\ \ref{sec-motivation}. However, our analysis will go through if a weaker condition is placed on matrices $A_i^\top A_i$, namely that they are \emph{diagonally dominant} in a known basis, in the sense that the off-diagonal elements are asymptotically negligible to the operator norm. Because it does not change anything essential in the analysis, for ease of exposition we will analyze the exact commutativity condition.

\begin{figure}[h]
\centering
  \includegraphics[scale=0.33]{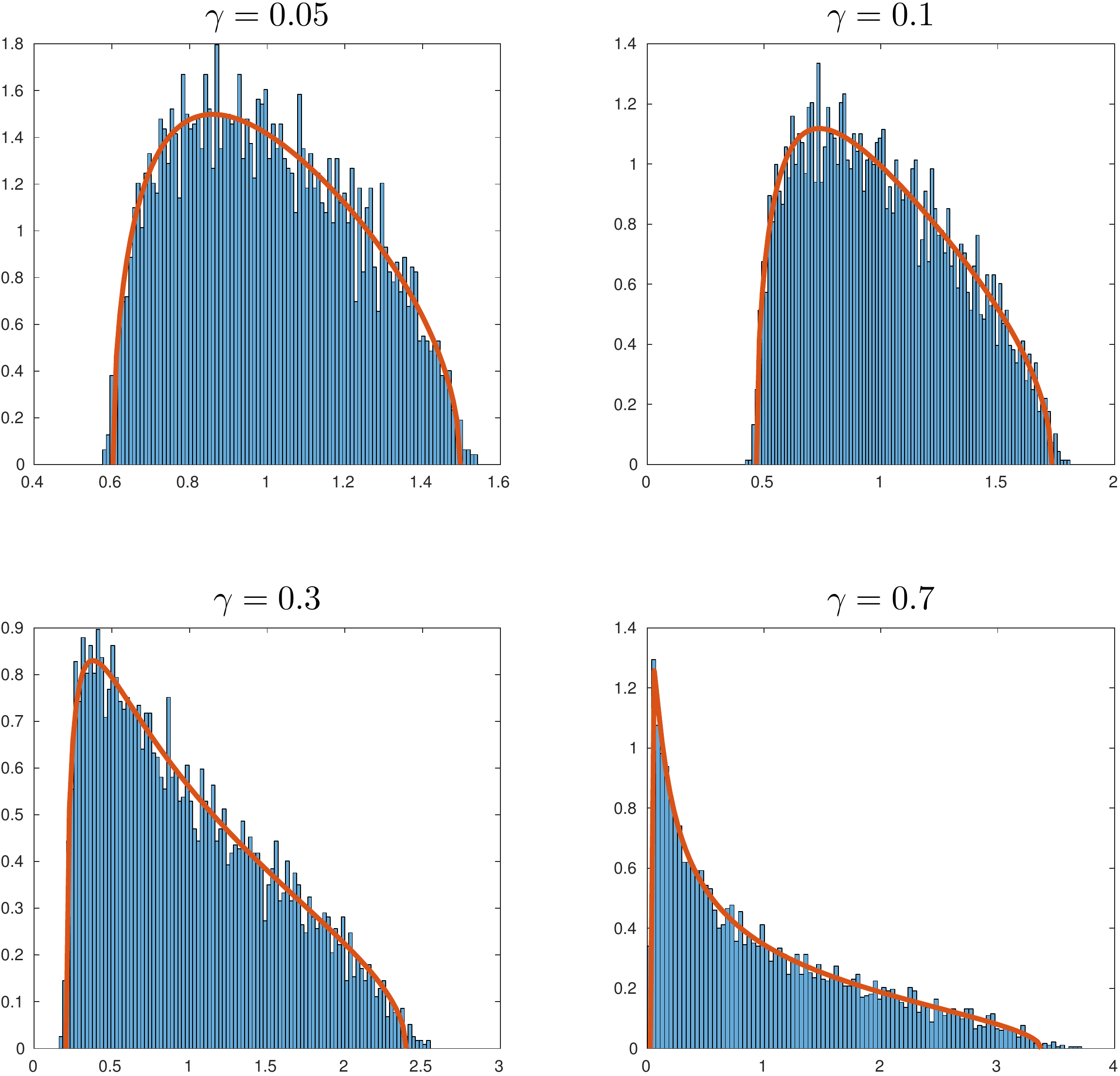}
\caption{
Histograms of empirical eigenvalues of whitened, backprojected noise using 30 CTFs, plotted against the Marchenko-Pastur density for different aspect ratios $\gamma$.
}
\label{fig-mp-cryo}
\end{figure}

The part of Assumption \ref{aaa222} that the entries of $\ep_i^*= A_i^\top \varepsilon_i$ are independent is easily checked for certain problems, such as missing data with independently selected coordinates. However, it may not always hold. For example, in the problem of CTF correction in cryo-EM (see Sec.\ \ref{sec-motivation}), each $A_i$ may be one of a discrete number of different CTFs; in this case, the assumption will not hold exactly. However, we have found in practice that the Marchenko-Pastur law holds even in this regime. To illustrate this, in Fig.\ \ref{fig-mp-cryo} we plot histograms of the sample covariance eigenvalues of simulated backprojected isotropic Gaussian noise using 30 different synthetic CTFs, generated using the ASPIRE software package \citep{aspire}, for 30 defocus values between 0.5 and 3. We plot the coefficients of the backprojected noise in the first frequency block of a steerable basis with radial part the Bessel functions, as described in \cite{bhamre2016denoising} and \cite{zhao2016fast}. Because this frequency block only contains 49 coefficients, the histogram we plot is for 100 draws of the noise. We whiten the backprojected noise, so the population covariance is the identity. As is evident from the figure, there is a very tight agreement between the empirical distribution of eigenvalues and the Marchenko-Pastur laws.

Assumption \ref{aaa0} about the signals presents a tradeoff between the delocalization of the spike eigenvectors and the moments of the signal coefficients. If a weak polynomial moment assumption or order $m$ holds for the signal coefficients $z_{ij}$, then it requires a delocalization at a polynomial rate $p^{-(2+c)/m}$ for the spike eigenvectors. In particular, this implies that at least a polynomial number of coefficients of $u_k$ must be nonzero, so that $u_k$ must be quite non-sparse. In contrast, if we assume a stronger sub-Gaussian moment condition for the noise, then only a logarithmic delocalization is required, which allows $u_k$ to be quite sparse. 

This assumption is similar to the incoherence condition from early works on matrix completion \citep[e.g.,][etc.]{candes2009exact}. Later works have shown that some form of recovery is possible even if we do not have incoherence \citep[e.g.,][]{koltchinskii2011nuclear}. However, in our case, complete sparsity of order one (i.e., only a fixed number of nonzero coordinates) seems impossible to recover. Indeed, suppose the rank is one and $u = (1,0,\ldots,0)$. Then, all information about $u$ and $z$ is in the first coordinate. In our sampling model, we observe a fixed fraction $q$ of the coordinates, and we can have $q<1$. Thus, for the unobserved coordinates, there is no information about the $z_i$. Therefore,  with the current random sampling mechanism, we think that accurate estimation is not possible for fixed sparsity. 

Assumption \ref{aaa} generalizes the existing conditions for spiked models. In particular, it is easy to see that it holds when the vectors $u_k$ are random with independent coordinates.  Specifically, let $x$ be a random vector with iid zero-mean entries with variance $1/p$. Then $\mathbb{E} x^\top (P - zI_p)^{-1} x = p^{-1}\text{tr} (P - zI_p)^{-1}$. Assumption \ref{aaa} requires that this converges to  $m_H(z)$, which follows from $H_p \Rightarrow H$. However, Assumption \ref{aaa} is more general, as it does not require any kind of randomness in $u_k$.

Our main result in this section is the following.

\begin{theorem}[Spectrum of transformed spiked models]
\label{spec}

Under the above conditions, the eigenvalue distribution of $\tilde B^\top \tilde B / n$ converges to the general Marchenko-Pastur law $F_{\gamma,H}$ a.s. In addition, for $k \le r$, the $k$-th largest eigenvalue of $\tilde B^\top \tilde B/ n$ converges, $\lambda_k(\tilde B^\top \tilde B)/ n \to t^2_k$ a.s., where
\begin{equation}
\label{sv_eq}
    t_k^2=
    \left\{
	\begin{array}{ll}
    D_{\gamma,H}^{-1}(\frac{1}{\ell_k}) 
        & \mbox{\, if \, }  \ell_k>1/D_{\gamma,H}(b_H^2), \\
    b_H^2 
        & \mbox{\, otherwise.}
	\end{array}
    \right.
\end{equation}

Moreover,  let $\hat u_k$ be the right singular vector of $\tilde B$ corresponding to $\lambda_k(\tilde B^\top \tilde B)$. Then $(u_j^\top \hat u_k)^2 \to c_{jk}^2$ a.s., where
\begin{equation}
\label{cos_inn22}
    c_{jk}^2=
    \left\{
    \begin{array}{ll}
    \frac{m_{\gamma,H}(t_k^2)}{D_{\gamma,H}'(t_k^2) \ell_k} 
        & \mbox{\, if \, } j=k 
        \mbox{\, and \, } \ell_k>1/D_{\gamma,H}(b_H^2), \\
    0 
        &  \mbox{\, otherwise.}
    \end{array}
    \right.
\end{equation}

Finally, let $Z_j = n^{-1/2}(z_{1j},\ldots,z_{nj})^\top$, and let $\hat{Z}_k$ be the $k$-th left singular vector of $\tilde B$. Then $(Z_j^\top \hat{Z}_k)^2 \to \tilde c_{jk}^2$ a.s., where
\begin{equation}
\label{cos_out44}
    \tilde{c}_{jk}^2=
    \left\{
    \begin{array}{ll}
    \frac{\underline{m}_{\gamma,H}(t_k^2)}{D_{\gamma,H}'(t_k^2)\ell_k} 
        & \mbox{\, if \, } j=k 
        \mbox{\, and \, }  \ell_k>1/D_{\gamma,H}(b_H^2), \\
    0 
        &  \mbox{\, otherwise.}
    \end{array}
    \right.
\end{equation}
\end{theorem}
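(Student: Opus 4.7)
The strategy is to reduce the claim to the BBP framework of \citet{benaych2012singular} for low-rank perturbations of a random matrix, after isolating and controlling a ``commutator'' error term coming from the fluctuations of $A_i^\top A_i$ around its mean. Writing $A_i^\top A_i = M + E_i$ with $E_i$ diagonal by commutativity, one obtains
\[
\tilde B_i = X_i + M^{-1} E_i X_i + M^{-1} A_i^\top \ep_i,
\]
so that $\tilde B = X + \Delta + \tilde N$, where $X = \sum_{k=1}^r \ell_k^{1/2} Z_k u_k^\top$ is the rank-$r$ spiked signal, $\Delta$ has rows $M^{-1} E_i X_i$, and $\tilde N$ has rows $M^{-1} A_i^\top \ep_i$. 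The argument then splits into (a) showing $\Delta$ is negligible, (b) identifying the bulk of $\tilde N$, and (c) analyzing the additive rank-$r$ perturbation by $X$.

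For (a), observe that $\Delta$ is the Hadamard product $G \odot X$ with $G_{ij} = (E_i)_{jj}/M_j$. The entries of $G$ are mean-zero, independent across $i$, and have bounded $(6+\phi)$-th moments by assumption \ref{aaa222}, while the delocalization of $u_k$ in assumption \ref{aaa0} forces each entry of $X$ to be small. A truncation-plus-matrix-Bernstein (or non-commutative Khintchine) argument then gives $\|\Delta\|_{\op}/\sqrt n \to 0$ almost surely, so $\Delta$ affects neither the asymptotic bulk nor any of the top $r$ singular vectors.

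For (b) and (c), the rows of $\tilde N$ have independent, mean-zero entries with variance profile $H_p \Rightarrow H$. By the generalized Marchenko-Pastur theorem \citep{marchenko1967distribution}, the ESD of $\tilde N^\top \tilde N/n$ converges a.s.\ to $F_{\gamma, H}$ and its largest eigenvalue converges to $b_H^2$. Since $\tilde B$ is a rank-$r$ additive perturbation of $\tilde N$ (modulo the negligible $\Delta$), the bulk of $\tilde B^\top \tilde B/n$ is also $F_{\gamma, H}$ by Weyl interlacing. For the outliers and vectors, an adaptation of \citet{benaych2012singular} to noise with a variance profile reduces the analysis to evaluating the limit of the resolvent quadratic forms $u_j^\top (\tilde N^\top \tilde N/n - zI)^{-1} u_k$, together with the analogous form on the left. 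The generic signal condition (assumption \ref{aaa}) guarantees that these limits are $\delta_{jk}\, \tau_k\, m_{\gamma, H}(z)$ and the dual version involving $\underline m_{\gamma, H}$, which feeds into the BBN secular equation to yield $\ell_k D_{\gamma, H}(t_k^2) = 1$ whenever $\ell_k > 1/D_{\gamma, H}(b_H^2)$ and $t_k^2 = b_H^2$ otherwise. Differentiating the secular equation and using $D_{\gamma, H}(x) = x\, m_{\gamma, H}(x)\, \underline m_{\gamma, H}(x)$ then produces the overlap formulas \eqref{cos_inn22} and \eqref{cos_out44}.

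The main obstacle is that the classical \citet{benaych2012singular} theorem requires bi-orthogonally invariant noise, which fails here due to the diagonal variance profile $P$. Assumption \ref{aaa} is exactly the surrogate needed, but one must still propagate it through the resolvent identities: concretely, one needs an a.s.\ deterministic-equivalent limit for $u_j^\top (\tilde N^\top \tilde N/n - zI)^{-1} u_k$ along the fixed deterministic directions $u_j, u_k$, established by leave-one-out or Lindeberg-style concentration for resolvent quadratic forms. Once this resolvent convergence is in hand (the left-side version following by the natural symmetry between $m_{\gamma, H}$ and $\underline m_{\gamma, H}$), the remaining contour-integration/residue computation is standard and delivers \eqref{sv_eq}, \eqref{cos_inn22}, and \eqref{cos_out44}.
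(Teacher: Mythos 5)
Your proposal follows essentially the same route as the paper: the same three-way decomposition $\tilde B = X + \Delta + \tilde N$, the same reduction to a rank-$r$ additive perturbation of a noise matrix with a variance profile, the same appeal to the Benaych-Georges--Nadakuditi master-equation framework, and the same identification of the generic-signal assumption as the surrogate for bi-orthogonal invariance, implemented via deterministic-equivalent (leave-one-out/Lindeberg) limits for the resolvent quadratic forms $u_j^\top(\tilde N^\top\tilde N/n - zI)^{-1}u_k$. Two points are worth flagging.

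First, for the negligibility of $\Delta = G \odot X$ you propose truncation plus matrix Bernstein or non-commutative Khintchine. The paper instead uses a lighter Hadamard bound: write $\|\Delta\| = \sup_{\|a\|=\|c\|=1}(a\odot z)^\top E(c\odot u) \le \|n^{-1/2}E\|\,\|z\|_\infty\|u\|_\infty$, control $\|n^{-1/2}E\|$ by a modified Bai--Silverstein operator-norm bound under $6+\phi$ moments, and kill $\|z\|_\infty\|u\|_\infty$ by exactly the delocalization assumption. Your route is workable, but under only $6+\phi$ moments the truncation bookkeeping needed for matrix Bernstein is heavier than the Hadamard bound, and it is the Hadamard structure that cleanly explains why delocalization of $u_k$ is the right hypothesis. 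Second, you are somewhat terse about the eigenvector overlaps. The master-equation approach delivers the singular-value limit from the first-order forms, but the overlaps \eqref{cos_inn22} and \eqref{cos_out44} additionally require almost-sure limits of the \emph{second-order} forms $\nu^\top(t^2I - N^\top N)^{-2}\nu$ and $Z^\top(t^2 I - NN^\top)^{-2}Z$ evaluated at the real spike location $t^2 > b_H^2$. This is nontrivial because $\nu$ is deterministic, so one cannot use the usual concentration-of-quadratic-forms lemma, and the Bai--Miao--Pan deterministic-equivalent limit is for complex $z$ away from the real axis. The paper handles both issues: it tracks the $v$-dependence in the Bai--Miao--Pan bounds to let $\Im z = v \downarrow 0$ at a polynomial rate, and then obtains the $(\cdot)^{-2}$ limits from the $(\cdot)^{-1}$ limits by a normal-families argument (a.s.\ uniform boundedness of the analytic functions $f_n(r) = -\nu^\top(rI - N^\top N)^{-1}\nu$ on a real neighborhood of $t^2$ plus Vitali/Lemma~2.14 of Bai--Silverstein to pass to $f_n'$). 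Your ``differentiating the secular equation'' and ``contour-integration/residue'' phrasing gestures at this but does not identify either of these two technical obstructions; they are precisely where the paper does real work beyond the standard BGN computation, so you should make them explicit.
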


The proof is in Sec.\ \ref{spec_pf}. While the conclusion of this theorem is very similar to the results of \cite{benaych2012singular}, our observation model $Y_i = A_iX_i+\ep_i$ is entirely different from the one in that paper; we are addressing a different problem.  Moreover, our technical assumptions are also more general and more realistic, and only require finite moments up to the sixth moment, unlike the more stringent conditions in previous work. In addition, we also have the result below, which differs from existing work. 

For the un-normalized backprojected matrix $B$, a version of Thm.\ \ref{spec} applies \emph{mutatis mutandis}. Specifically, we let $H_p$ be the distribution of the variances of $A_i^\top \ep_i$. We replace $I_p$ with $M$ in the assumptions when needed, so we let $\tau_k = \lim_{n\to\infty}\|M u_k\|^2$, and $\nu_j=M u_j/\|M u_j\|$. Then the above result holds for $B$, with $\ell_k$ replaced by $\tau_k\ell_k$, and $u_j$ replaced by $\nu_j$. The proof is identical, and is also presented in Sec.\ \ref{spec_pf}.

\subsection{Optimal singular value shrinkage}
\label{sec:svshrink}

Theorem \ref{spec} describes precisely the limiting spectral theory of the matrix $\tilde{B} / \sqrt{n}$. Specifically, we derived formulas for the limiting cosines $c_k$ and $\tilde{c}_k$ of the angles between the top $r$ singular vectors of $\tilde{B} / \sqrt{n}$ and $X / \sqrt{n}$, and the relationship between the top singular values of these matrices. 

It turns out, following the work of \cite{gavish-donoho-2017} and \cite{nadakuditi2014optshrink}, that this information is sufficient to derive the optimal singular value shrinkage predictor of $X$. It is shown in \cite{gavish-donoho-2017} that $\lambda_i^* = \ell_k^{1/2} c_k \tilde{c}_k$, under the convention $c_k,\tilde{c}_k > 0$. Furthermore, the AMSE of this predictor is given by $\sum_{k=1}^r \ell_k(1 - c_k^2 \tilde{c}_k^2)$. We outline the derivation of these formulas in Sec.\ \ref{sv-derived}, though the reader may wish to refer to \cite{gavish-donoho-2017} for a more detailed description of the method, as well as extensions to other loss functions.

We next show how to derive consistent estimators of the angles and the limiting singular values of the observed matrix. Plugging these into the expression $\lambda_i^* = \ell_i^{1/2} c_i \tilde{c}_i$, we immediately obtain estimators of the optimal singular values $\lambda_i^*$. This will complete the proof that the algorithm given in Sec.\ \ref{in_sample_alg} solves the problem posed in Sec.\ \ref{Normalization} and defines the EBLP.

\subsubsection{Estimating $\ell_k$, $c_k$ and $\tilde{c}_k$}
To evaluate the optimal $\lambda_i^*$, we estimate the values of $\ell_k$, $c_k$, and $\tilde{c}_k$ using Thm.\ \ref{spec} whenever $\ell_k \ge b_H^2$ (that is, if the signal is strong enough). From \eqref{sv_eq} we have the formula $\ell_k = 1 / D_{\gamma,H}(t_k^2)$ where $t_k$ is the limiting singular value of the observed matrix $\tilde{B} / \sqrt n$. We also have the formulas \eqref{cos_inn22} and \eqref{cos_out44} for $c_k$ and the $\tilde{c}_k$.

We will estimate the Stieltjes transform $m_{\gamma,H}(z)$ by the \textit{sample Stieltjes transform}, defined as:
\begin{equation*}
\label{st_plugin}
    \hat{m}_{\gamma,H}(z) = \frac{1}{p-r}\sum_{k=r+1}^p \frac{1}{\lambda_k - z},
\end{equation*}
where the sum is over the bottom $p-r$ eigenvalues $\lambda_k$ of $\tilde B^\top \tilde B / n$. It is shown by \cite{nadakuditi2014optshrink} that $\hat{m}_{\gamma,H}$ is a consistent estimator of $m_{\gamma,H}$, and that using the corresponding plug-in estimators of $\underline{m}_{\gamma,H}$, $D_{\gamma,H}$ and $D_{\gamma,H}^\prime$, we also obtain consistent estimators of $\ell_k, c_k$, and $\tilde{c}_k$.

\subsubsection{Using $\hat{M}$ in place of $M$}
\label{estim_M}

To make the procedure fully implementable, we must be able to estimate the mean matrix $M = \mathbb{E}A_i^\top A_i$. If $M$ is estimated from the $n$ iid matrices $A_i^\top A_i$ by the sample mean $\hat{M} = n^{-1} \sum_{i=1}^n A_i^\top A_i$, we show that multiplying by $\hat{M}^{-1}$ has asymptotically the same effect as multiplying by the true $M^{-1}$, assuming that the diagonal entries of $M$ are bounded below. This justifies our use of $\hat M$.

\begin{lem}
\label{hat_M}
Suppose that the entries $M_i$ of $M$ are bounded away from 0: $M_i \ge \delta$ for some $\delta>0$, for all $i$. Let $\hat{M} = n^{-1}\sum_{i=1}^n A_i^\top A_i$. Then 
$$
    \lim_{p,n\to\infty} n^{-1/2} \| B M^{-1} -  B\hat{M}^{-1} \|_{op} = 0.
$$
\end{lem}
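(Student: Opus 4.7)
The plan is to split the difference by writing
\[
    BM^{-1} - B\hat{M}^{-1} = B\bigl(M^{-1}-\hat{M}^{-1}\bigr),
\]
so that $\|BM^{-1}-B\hat{M}^{-1}\|_{op}\le \|B\|_{op}\cdot\|M^{-1}-\hat{M}^{-1}\|_{op}$, and then control the two factors separately. For the first factor, the remark following Theorem \ref{spec} (the \emph{mutatis mutandis} version applied to the unnormalized matrix $B$) guarantees that $n^{-1/2}\|B\|_{op}$ is bounded almost surely in the high-dimensional limit. Alternatively one can use $\|B\|_{op}\le \|BM^{-1}\|_{op}\|M\|_{op}$ together with Theorem \ref{spec} applied to $\tilde B = BM^{-1}$ and the fact that $\|M\|_{op}$ is bounded (since $A_i^\top A_i$ have uniformly bounded diagonal entries by the moment assumption on $E_i$). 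Either way, $n^{-1/2}\|B\|_{op}=O(1)$ a.s., so it suffices to prove that $\|M^{-1}-\hat{M}^{-1}\|_{op}\to 0$ a.s.

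Both $M$ and $\hat{M}$ are diagonal under the commutativity assumption, so their operator norms are just the maximal absolute values of the diagonals. Writing $\hat{M}_j - M_j = n^{-1}\sum_{i=1}^n E_{ijj}$ with the $E_{ijj}$ iid, mean zero, and having uniformly bounded $(6+\phi)$-th moment by Assumption~\ref{aaa222}, I would apply the Marcinkiewicz--Zygmund / Rosenthal inequality to get
\[
    \mathbb{E}\bigl|\hat{M}_j - M_j\bigr|^{6+\phi} \le C\, n^{-(3+\phi/2)}.
\]
Combined with a union bound over $j=1,\dots,p$ and a Markov estimate, this yields
\[
    \mathbb{P}\!\left(\max_{j}|\hat{M}_j-M_j|>t\right)\le \frac{C\,p}{t^{6+\phi}\,n^{3+\phi/2}},
\]
which, with $p/n\to\gamma$, is summable in $n$ for any fixed $t>0$. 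A Borel--Cantelli argument then gives $\max_j|\hat{M}_j-M_j|\to 0$ a.s.

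With the lower bound $M_j\ge \delta$, eventually $\hat{M}_j \ge \delta/2$ for all $j$ simultaneously, and then
\[
    \|M^{-1}-\hat{M}^{-1}\|_{op} = \max_j \frac{|M_j-\hat{M}_j|}{M_j\hat{M}_j} \le \frac{2}{\delta^2}\max_j|M_j-\hat{M}_j| \to 0 \quad \text{a.s.}
\]
Combining this with $n^{-1/2}\|B\|_{op}=O(1)$ a.s.\ finishes the proof. The main technical obstacle is the \emph{uniform} concentration of the diagonal of $\hat{M}$ across $p\sim \gamma n$ coordinates; the sixth-moment assumption on $E_{ij}$ is precisely what makes the union bound go through, since it produces a summable tail bound after accounting for the $p$ coordinates. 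Everything else is elementary linear algebra of diagonal matrices.
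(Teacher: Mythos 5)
Your proof is correct and follows essentially the same route as the paper's: factor the difference as $B(M^{-1}-\hat{M}^{-1})$, reduce to $\sup_j|\hat{M}_j-M_j|\to 0$ via the lower bound $M_j\ge\delta$, apply a Rosenthal/Marcinkiewicz--Zygmund moment bound to each diagonal entry, take a union bound over the $p\sim\gamma n$ coordinates, and conclude by Borel--Cantelli. The only cosmetic difference is that the paper uses the exponent $4+\phi$ in the moment inequality (yielding $n^{-(1+\phi/2)}$ after the union bound) while you use the full $6+\phi$ moments (yielding $n^{-(2+\phi/2)}$); both tails are summable, so the argument goes through either way.
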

See Sec.\ \ref{hat_M_pf} for the proof. Note that the condition of this lemma are violated only when the entries of $M$ can be arbitrarily small; but in this case, the information content in the data on the corresponding coordinates vanishes, so the problem itself is ill-conditioned. The condition is therefore reasonable in practice.


%
%
%
%
%
%
%
%
%
%
%
%
%
%
\subsection{Prediction for weighted loss functions: whitening and big data}
\label{W_norm}
In certain applications there may be some directions that are more important than others, whose accurate prediction is more heavily prized. We can capture this by considering weighted Frobenius loss functions $\|\hat{X}_i - X_i\|_W^2 = \|W(\hat{X}_i - X_i)\|^2$, where $W$ is a positive-definite matrix. Can we derive optimal shrinkers with respect to these weighted loss functions?

The weighted error can be written as $\|\hat{X}_i - X_i\|_W^2 = \|W(\hat{X}_i - X_i)\|^2 = \|\widehat{W X_i} - W X_i \|^2$. In other words, the problem of predicting $X_i$ in the $W$-norm is identical to predicting $WX_i$ in the usual Frobenius norm. Because the vectors $W X_i$ lie in an $r$-dimensional subspace (spanned by $W u_1,\dots, W u_r$), the same EBLP method we have derived for $X_i$ can be applied to prediction of $W X_i$, assuming that the technical conditions we imposed for the original model hold for this transformed model. That is, we perform singular value shrinkage on the matrix of transformed observations $W \tilde{B}_i$.

To explore this further, recall that after applying the matrix $M^{-1}$ to each vector $A_i^\top Y_i$, the data matrix behaves asymptotically like the matrix with columns $X_i + \tilde{\ep}_i$, for some noise vectors $\tilde{\ep}_i$ that are independent of the signal $X_i$. The observations $W M^{-1} A_i^\top Y_i$ are asymptotically equivalent to $W X_i + W\tilde{\ep}_i$. If we choose $W$ to be the square root of the inverse covariance of $\tilde{\ep}_i$, then the effective noise term $W \tilde{\ep}_i$ has a identity covariance; we call this transformation ``whitening the effective noise''.

One advantage of whitening is that there are closed formulas for the asymptotic spikes and cosines. This is because the Stieltjes transform of white noise has an explicit closed formula; see \cite{bai2009spectral}. To make sense of the formulas, we will assume that the low-rank model $W X_i$ satisfies the assumptions we initially imposed on $X_i$; that is, we will assume:
\begin{align}
\label{eq-wx}
    W X_i = \sum_{k=1}^r \tilde{\ell}_k^{1/2} \tilde{z}_{ik} \tilde{u}_k
\end{align}
where the $z_{ik}$ are iid and the $\tilde{u}_k$ are orthonormal. With this notation, the empirical eigenvalues of $W\tilde{B}^\top \tilde{B} W / n$ converge to
\begin{align*}
    \lambda_k = 
    \begin{cases}
    (\tilde{\ell}_k + 1)\left( 1 + \frac{\gamma}{\tilde{\ell}_k}\right)
        & \text{ if } \tilde{\ell}_k > \sqrt{\gamma}, \\
    (1 + \sqrt{\gamma})^{2} 
        & \text{ otherwise}
    \end{cases}
\end{align*}
while the limit of the cosine of the angle between the $k^{th}$ empirical PC $\hat{u}_k$ and the $k^{th}$ population PC $u_k$ is
\begin{align}
\label{cos_inner}
    c_k^2 = 
    \begin{cases}
    \frac{1 - \gamma/\tilde{\ell}_k^2}{1 + \gamma/\tilde{\ell}_k}
        & \text{ if } \tilde{\ell}_k > \sqrt{\gamma}, \\
    0 
        & \text{ otherwise}
    \end{cases}.
\end{align}
and the limit of the cosine of the angle between the $k^{th}$ empirical left singular vector $\hat{v}_k$ and the $k^{th}$ left population singular vector $v_k$ is
\begin{align}
\label{cos_outer}
    \tilde{c}_k^2 = 
    \begin{cases}
    \frac{1 - \gamma/\tilde{\ell}_k^2}{1 + 1/\tilde{\ell}_k}
        & \text{ if } \tilde{\ell}_k > \sqrt{\gamma}, \\
    0 
        & \text{ otherwise}
    \end{cases}.
\end{align}

These formulas are derived in \cite{benaych2012singular}; also see \cite{paul2007asymptotics}.

Following Sec.\ \ref{sec:svshrink},  the $W$-AMSE of the EBLP is $\sum_{k=1}^r \tilde{\ell}_k (1-c_k^2 \tilde{c}_k^2)$. Since the parameters $\tilde{\ell}_k$, $c_k$ and $\tilde{c}_k$ are estimable from the observations, the $W$-AMSE can be explicitly estimated.

Using these formulas makes evaluation of the optimal shrinkers faster, as we avoid estimating the Stieltjes transform from the bottom $p-r$ singular values of $\tilde B$. Using whitening, the entire method only requires computation of the top $r$ singular vectors and values. Whitening thus enables us to scale our methods to extremely large datasets.


%
%
%

\subsubsection{Estimating the whitening matrix $W$}
\label{sec-est-W}
In the observation model $Y_i = A_i^\top X_i + \ep_i$, if the original noise term $\ep_i$ has identity covariance, that is $\Sigma_\ep = I_p$, then it is straightforward to estimate the covariance of the ``effective'' noise vector $\tilde{\ep}_i = M^{-1} A_i^\top \ep_i$, and consequuently to estimate the whitening matrix $W = \Sigma_{\tilde{\ep}}^{-1/2}$.

It is easy to see that $A_i^\top \ep_i$ has covariance $M = \mathbb{E}[A_i^\top A_i]$, which is diagonal. Then the covariance of $\tilde{\ep}_i$ is $M^{-1} M M^{-1}  = M^{-1}$, and $W = M^{1/2}$. As in the proof of Lemma \ref{hat_M}, $W$ can be consistently estimated from the data by the sample mean $\sum_{i=1}^n (A_i^\top A_i)^{1/2} / n$.

\subsection{Selecting the rank}
\label{sec-rk}

Our method requires a preliminary rank estimate. Our results state roughly that, after backprojection, the linearly transformed spiked model becomes a spiked model. So we believe we may be able to adapt some popular methods for selecting the number of components in spiked models. There are many such methods, and it is not our goal to recommend a particular one. One popular method in applied work is a permutation method called parallel analysis \citep{buja:eyob:1992,dobriban2017factor}, for which we have proposed improvements \citep{dobriban2017deterministic}. For other methods, see \cite{kritchman2008determining,passemier2012determining}, and also \cite{yao2015large}, Ch.\ 11, for a review. 

If the method is strongly consistent, in the sense that the number of components is almost surely correctly  estimated, then it is easy to see that the entire proof works. Specifically, the optimal singular value shrinkers can be obtained using the same orthonormalization method, and they can also be estimated consistently. Thus, for instance the methods from \cite{passemier2012determining,dobriban2017deterministic} are applicable if the spike strengths are sufficiently large.

\section{Out-of-sample prediction}
\label{oos_pred_alg}

In Sec.\ \ref{deriv_opt}, we derived the EBLP for predicting $X_i$ from $Y_i = A_i X_i + \ep_i$, $i=1,\dots,n$. We found the optimal coefficients $\eta_k$ for the predictor $\sum_{k=1}^r \eta_k \langle \tilde B_i, \hat{u}_k\rangle \hat{u}_k$, where the $\hat{u}_k$ are the empirical PCs of the normalized back-projected data $\tilde B_i = \hat M^{-1} A_i^\top Y_i$.

Now suppose we are given another data point, call it $Y_0 = A_0 X_0 + \ep_0$, drawn from the same model, but independent of $Y_1,\dots,Y_n$, and we wish to predict $X_0$ from an expression of the form $\sum_{k=1}^r \eta_k \langle\tilde B_0, \hat{u}_k\rangle \hat{u}_k$. 

At first glance, this problem appears identical to the one already solved. However, there is a subtle difference: the new data point is \textit{independent of the empirical PCs} $\hat{u}_1,\dots,\hat{u}_r$. It turns out that this independence forces us to use a \textit{different} set of coefficients $\eta_k$ to achieve optimal prediction.

We call this the problem of \textit{out-of-sample prediction}, and the optimal predictor the \textit{out-of-sample EBLP}. To be clear, we will refer to the problem of predicting $Y_1,\dots,Y_n$ as \textit{in-sample prediction}, and the optimal predictor as the \textit{in-sample EBLP}. We call $(Y_1,A_1),\dots, (Y_n,A_n)$ the \textit{in-sample observations}, and $(Y_0,A_0)$ the \textit{out-of-sample observation}. 

One might object that solving the out-of-sample problem is unnecessary, since we can always convert the out-of-sample problem into the in-sample problem. We could enlarge the in-sample data to include $Y_0$, and let $\hat{u}_k$ be the empirical PCs of this extended data set. While this is true, it is often not practical for several reasons. First, in on-line settings where a stream of data must be processed in real-time, recomputing the empirical PCs for each new observation may not be feasible. Second, if $n$ is quite large, it may not be viable to store all of the in-sample data $Y_1,\dots, Y_n$; the $r$ vectors $\hat{u}_1,\dots,\hat{u}_r$ require an order of magnitude less storage.

In this section we will first present the steps of the out-of-sample EBLP. Then we will provide a rigorous derivation.  We will also show that the AMSEs for in-sample and out-of-sample EBLP with respect to squared $W$-norm loss are identical, where $W$ is the inverse square root of the effective noise covariance. This is a rather surprising result that gives statistical justification for the use of out-of-sample EBLP, in addition to the computational considerations already described.

\subsection{Out-of-sample EBLP}
\label{out_sample_alg}

The out-of-sample denoising method can be stated simply, similarly to the in-sample algorithm in Sec.\ \ref{in_sample_alg}. 
We present the steps below.

\begin{enumerate}

\item \emph{Input}:
The top $r$ in-sample empirical PCs $\hat{u}_1,\dots,\hat{u}_r$. Estimates of the eigenvalues $\hat\ell_1,\dots,\hat\ell_r$ and cosines $\hat c_1,\dots,\hat c_r$. An estimate $\hat \Sigma_{\tilde{\ep}}$ of the noise covariance $\Sigma_{\tilde{\ep}}$ of the normalized backprojected noise vectors $\tilde{\ep}_i = M^{-1} A_i^\top \ep_i$. The diagonal matrix $\hat M^{-1}$ which is the inverse of an estimate of the covariance matrix of the noise $\ep_i$, and an out-of-sample observation $(Y_0,A_0)$. 

\item 
Construct the vector $\tilde B_0 =\hat M^{-1} A_0^\top Y_0$.

\item
Compute estimators of the out-of-sample coefficients $\eta_1,\dots,\eta_r$. These are given by the formula $\hat \eta_k = \frac{\hat \ell_k \hat c_k^2}{\hat \ell_k \hat c_k^2 + \hat d_k}$, where $\hat d_k = \hat{u}_k^\top \hat \Sigma_{\tilde{\ep}} \hat{u}_k$.

\item \emph{Output}:
Return the vector $\hat{X}_0 = \sum_{k=1}^r \hat \eta_k \langle \tilde B_0 , \hat{u}_k \rangle \hat{u}_k$.

\end{enumerate}

\subsection{Deriving out-of-sample EBLP}

We now derive the out-of-sample EBLP described in Sec.\ \ref{out_sample_alg}. Due to the independence between the $(Y_0,A_0)$ and the empirical PCs $\hat{u}_k$, the derivation is much more straightforward than was the in-sample EBLP. Therefore, we present the entire calculation in the main body of the paper.

\subsubsection{Covariance of $M^{-1} A_i^\top Y_i$}
Let $\tilde B_i = M^{-1} A_i^\top Y_i = M^{-1} D_i X_i + M^{-1} A_i^\top \varepsilon_i$, with $X_i = \sum_{j=1}^r \ell_j^{1/2} z_{ij} u_j$ and $D_i = A_i^\top A_i$.  Let $R_i = X_i + M^{-1} A_i^\top \varepsilon_i = X_i + \tilde{\ep}_i$; so $\tilde B_i = R_i + E_i X_i$, with $E_i = I_p - M^{-1}A_i^\top A_i$.

Observe that
$$
    \text{Cov}(\tilde B_i) = \text{Cov}(R_i) + \text{Cov}(E_i X_i) 
        + \mathbb{E}R_i (E_i X_i)^\top + \mathbb{E} (E_i X_i)^\top R_i
$$
and also that
$$
    \mathbb{E}R_i (E_i X_i)^\top 
        = \mathbb{E} X_i X_i^\top E_i 
        + \mathbb{E} \tilde{\ep}_i X_i^\top E_i = 0
$$
since $\mathbb{E}E_i = 0$ and $\mathbb{E} \varepsilon_i = 0$, and they are independent of $X_i$; similarly $ \mathbb{E} (E_i X_i)^\top R_i = 0$ as well. Consequently,
$$
    \text{Cov}(\tilde B_i) = \text{Cov}(R_i) + \text{Cov}(E_i X_i).
$$


Let $c_j = \mathbb{E}E_{ij}^2$. Then
\begin{align*}
  \mathbb{E} (E_i X_i) (E_i X_i)^\top
    &= \sum_{j=1}^r \ell_j^{1/2}
    \left(
    \begin{array}{c c c c  }
    c_1 u_{j1}^2 &                  &      &                      \\
                      &c_2 u_{j2}^2 &      &                      \\
                      &                  & \ddots &               \\ 
                      &                  &       &   c_p u_{jp}^2 
    \end{array}
    \right) 
    \nonumber \\
\end{align*}
which goes to zero in operator norm as $n,p\to\infty$, by the incoherence property of the $u_k$'s, and because $c_j$ are uniformly bounded under the assumptions of Theorem \ref{spec}. Therefore $\| \Sigma_{\tilde  B} - (\Sigma_X + \Sigma_{\tilde{\ep}})\|_{op} \to 0$.

\subsubsection{Out-of-sample coefficients and AMSE}
We will compute the optimal (in sense of AMSE) coefficients for out-of-sample prediction. We have normalized, back-projected observations $\tilde B_i = M^{-1} D_i X_i + \tilde{\ep}_i$, with $X_i = \sum_{j=1}^r \ell_j^{1/2} z_{ij} u_j$ and $\tilde{\ep}_i = M^{-1} A_i^\top \ep_i$.

We are looking for the coefficients $\eta_1,\dots,\eta_r$ so that the estimator
\beq
\label{oos_pred}
    \hat{X}_0^\eta = \sum_{j=1}^r \eta_j \langle \tilde B_0, \hat{u}_j \rangle \hat{u}_j
\eeq
has minimal AMSE. Here, $\hat{u}_j$ are the empirical PCs based on the in-sample data $(Y_1,A_1),\dots,(Y_n,A_1)$ (that is, the top $r$ eigenvectors of $\sum_{j=1}^n \tilde  B_i \tilde  B_i^\top$), whereas $(Y_0,A_0)$ is an out-of-sample datapoint.

It is easily shown that the contribution of $\eta_k$ to the overall MSE is:
$$
    \ell_k + \eta_k^2 \mathbb{E}(\hat{u}_k^\top \tilde  B_0)^2
           - 2 \eta_k \ell_k^{1/2}
             \mathbb{E}z_{0k}(\hat{u}_k^\top \tilde  B_0)(\hat{u}_k^\top u_k).
$$
It is also easy to see that the interaction terms obtained when expanding the MSE vanish.

To evaluate the quadratic coefficient above, first take the expectation over $Y_0$ and $A_0$ only, which gives:
\begin{align*}
    \mathbb{E}_0 (\hat{u}_k^\top \tilde  B_0)^2 
        = \hat{u}_k^\top \Sigma_{\tilde  B} \hat{u}_k
    &\sim \hat{u}_k^\top  \left(\sum_{j=1}^r \ell_j u_j u_j^\top 
                           + \Sigma_{\tilde{\ep}}\right) 
        \hat{u}_k 
        \nonumber \\
    & \sim \ell_k c_k^2 
           + \hat{u}_k^\top \Sigma_{\tilde{\ep}} \hat{u}_k 
\end{align*}

Note that when the original noise $\ep_i$ is white (i.e.\ $\Sigma_\ep = I_p$), we can estimate $d_{k} \equiv \hat{u}_k^\top \Sigma_{\tilde{\ep}} \hat{u}_k$ using the approximation $\Sigma_{\tilde{\ep}} \sim M^{-1}$, as in Sec.\ \ref{sec-est-W}. Defining the estimator $\hat{d}_k = \hat{u}_k^\top M^{-1} \hat{u}_k$ (or $\hat{u}_k^\top \hat{M}^{-1} \hat{u}_k$, where $\hat{M} = \sum_{i=1}^n A_i^\top A_i / n$), we therefore have $|\hat d_k - d_k| \to 0$.

Now turn to the linear term. We have $\hat{u}_k^\top \tilde  B_0 = \sum_{j=1}^r \ell_j^{1/2} z_{0j} \hat{u}_k^\top M^{-1} D_0 u_j + \hat{u}_k^\top \varepsilon_0$; using $\mathbb{E}[M^{-1}D_0] = I_p$ and using the almost sure convergence results, it follows after some simple calculation that $\ell_k^{1/2}\mathbb{E}[z_{0k}\hat{u}_k^\top\tilde  B_0 \hat{u}_k^\top u_k] \to \ell_k c_k^2$. Consequently, the mean-squared error of the out-of-sample predictor (as a function of $\eta_k$) is asyptotically equivalent to:
$$
    \sum_{k=1}^r \left\{
           \ell_k + \eta_k^2 (\ell_k c_k^2 + d_k) - 2 \eta_k \ell_k c_k^2 
     \right\} .
$$
This is minimized at 
$  \eta_k^* = \frac{\ell_k c_k^2}{\ell_k c_k^2 + d_k}
$
and the MSE is asymptotically equivalent to:
$$
    \sum_{k=1}^r \left( \ell_k - \frac{\ell_k^2 c_k^4}{\ell_k c_k^2 + d_k} \right).
$$

This finishes the derivation of the optimal coefficients for out-of-sample prediction.

\subsection{The whitened model}
Following the approach described in Sec.\ \ref{W_norm}, we can optimally predict $X_0$ using the $W$-loss, for any positive semi-definite matrix $W$. This is equivalent to performing optimal prediction of the signal $W X_0$ based on the observations $W \tilde B_0 = W M^{-1} D_0 X_0 + W \tilde{\ep}_0$ in the usual Frobenius sense.

We can always transform the data so that the effective noise $W \tilde{\ep} = W M^{-1} A_0^\top \tilde{\ep}_0$ has identity covariance; that is, take $W = \Sigma_{\tilde{\ep}}^{-1/2}$.

In this setting, the parameters $\hat{u}_k^\top W \Sigma_{\tilde{\ep}}^{-1/2} W  \hat{u}_k = \hat{u}_k^\top \hat{u}_k = 1$, and so $d_k = 1$. Consequently, the limiting AMSE is
\begin{align}
\label{oos-amse-white}
    \sum_{k=1}^r \left( \tilde{\ell}_k 
                       - \frac{\tilde \ell_k^2 c_k^4}{\tilde{\ell}_k c_k^2 + 1} 
                 \right)
\end{align}
where $\tilde{\ell}_k$ are the eigenvalues of the whitened model $W X_i$, assuming the model \eqref{eq-wx}. Using the formulas \eqref{cos_inner} and \eqref{cos_outer} for $c_k$ and $\tilde{c}_k$ as functions of $\tilde{\ell}_k$, it is straightforward to check that formula \eqref{oos-amse-white} is equal to $\sum_{k=1}^r \tilde{\ell}_k (1 - c_k^2 \tilde{c}_k^2)$, which is the in-sample AMSE with $W$-loss; we will show this in Sec.\ \ref{oos-proof}.  That is, the AMSE for whitened observations are identical for in-sample and out-of-sample EBLP. 

Thus, we state the following theorem:

\begin{theorem}[Out-of-sample EBLP]
\label{oos-prop}
Suppose our observations have the form $Y_i = A_iX_i + \ep_i$, $i=1,\ldots,n$, under the conditions of Thm.\ \ref{spec}, and suppose in addition that \eqref{eq-wx} holds, with $W = \Sigma_{\tilde \ep}^{-1/2}$ and $\tilde{\ep}_i = M^{-1}A_i^\top \ep_i$.

Given an out-of-sample observation $Y_0,A_0$, consider a predictor of $X_0$ of the form \eqref{oos_pred}. Then, for the optimal choice of $\eta_k$, the minimum asymptotic out-of-sample MSE achieved by this predictor in $\Sigma_{\tilde{\ep}}^{-1/2}$-norm equals the corresponding expression for in-sample MSE.

Thus, asymptotically, out-of-sample denoising is not harder than in-sample denoising.
\end{theorem}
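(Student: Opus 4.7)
The theorem reduces to an algebraic identity, since the main computational work---deriving the optimal out-of-sample coefficients $\eta_k^* = \ell_k c_k^2 / (\ell_k c_k^2 + d_k)$ and the resulting asymptotic MSE---has already been done in the preceding derivation. My plan is to carry out the equality of the two AMSE expressions termwise in $k$.

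First, I would specialize to the whitened model, where $W = \Sigma_{\tilde\ep}^{-1/2}$ makes the effective noise isotropic, so that $d_k = \hat u_k^\top W \Sigma_{\tilde\ep} W \hat u_k = 1$ and the spike strengths are the $\tilde\ell_k$ of \eqref{eq-wx}. Under this reduction the whitened problem fits the standard spiked model, with the closed-form spike cosines \eqref{cos_inner} and \eqref{cos_outer} available. The out-of-sample AMSE becomes \eqref{oos-amse-white}, while the in-sample $W$-AMSE from Sec.\ \ref{sec:svshrink} is $\sum_k \tilde\ell_k(1 - c_k^2 \tilde c_k^2)$. Subtracting $\tilde\ell_k$ from each, the claim is equivalent to showing, for each $k$,
\[
    \frac{\tilde\ell_k c_k^2}{\tilde\ell_k c_k^2 + 1} \;=\; \tilde c_k^2 .
\]

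Second, I would verify this identity by direct substitution. In the supercritical case $\tilde\ell_k > \sqrt\gamma$, the formulas \eqref{cos_inner}--\eqref{cos_outer} give $c_k^2 = (\tilde\ell_k^2-\gamma)/[\tilde\ell_k(\tilde\ell_k+\gamma)]$ and $\tilde c_k^2 = (\tilde\ell_k^2-\gamma)/[\tilde\ell_k(\tilde\ell_k+1)]$. A short computation yields $\tilde\ell_k c_k^2 = (\tilde\ell_k^2-\gamma)/(\tilde\ell_k+\gamma)$ and $\tilde\ell_k c_k^2 + 1 = \tilde\ell_k(\tilde\ell_k+1)/(\tilde\ell_k+\gamma)$, whose ratio simplifies to exactly $\tilde c_k^2$. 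In the subcritical case $\tilde\ell_k \le \sqrt\gamma$, both $c_k$ and $\tilde c_k$ vanish and both summands collapse to $\tilde\ell_k$, so the equality holds trivially.

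There is no real obstacle here---the ``surprising'' cancellation between the out-of-sample penalty term $\tilde\ell_k c_k^2 + 1$ in the denominator and the in-sample cosine product $c_k^2 \tilde c_k^2$ is built into the BBP phase transition geometry. The only thing worth being careful about is matching conventions: the $\tilde\ell_k$, $c_k$ and $\tilde c_k$ appearing in the two AMSE formulas must both refer to the whitened model, and the out-of-sample derivation from the previous subsection must be invoked with $d_k = 1$. Once that bookkeeping is done, the theorem follows from the one-line algebraic identity above.
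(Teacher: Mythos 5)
Your proposal is correct and follows essentially the same route as the paper: after reducing to the whitened model with $d_k=1$, both arguments verify the AMSE equality termwise by substituting the closed-form cosine formulas \eqref{cos_inner}--\eqref{cos_outer}; your identity $\tilde\ell_k c_k^2/(\tilde\ell_k c_k^2+1)=\tilde c_k^2$ is the reciprocal of the paper's $1/\tilde c_k^2 = 1 + 1/(\tilde\ell_k c_k^2)$. Your explicit treatment of the subcritical case is a small tidying the paper leaves implicit, but the argument is the same.
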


The remainder of the proof of Thm.\ \ref{oos-prop} is contained in Sec.\ \ref{oos-proof}.

\section{Matrix denoising and missing data}
\label{mx_comp}
A well-studied problem to which our analysis applies is the problem of missing data, where coordinates are discarded from the observed vectors. Here the operators $D_i = A_i^\top A_i$ place zeros in the unobserved entries.

Without additive noise, recovering the matrix $X = [X_1,\dots,X_n]^\top$ is known as \emph{matrix completion}, and has been widely studied in statistics and signal processing. There are many methods with guarantees of exact recovery for certain classes of signals \citep{candes2009exact,candes2010power,jain2013,keshavan2010matrix,recht2011simpler, jain2013}.


Many methods for matrix completion assume that the target matrix $X$ is low-rank. This is the case for the linearly-transformed model as well, since the rows $X_i^\top$ of $X$ all lie in the $r$-dimensional subspace spanned by $u_1,\dots,u_r$. In the linearly-transformed model, the low-rank target matrix $X$ is itself random, and the analysis we provide for the performance of EBLP is dependent on this random structure.

Our approach differs from most existing methods. Our methods have the following advantages:

\begin{enumerate}
\item {\bf Speed.} Typical methods for matrix completion are based on solving optimization problems such as nuclear norm minimization \citep{candes2009exact, candes2010power}. These require iterative algorithms, where an SVD is computed at each step. In contrast, when an upper bound on the rank of the target matrix is known a priori our methods require only one SVD, and are thus much faster. Some of the methods for rank estimation in the spiked model discussed in Sec. \ref{sec-rk}, such as \cite{dobriban2017deterministic} and \cite{kritchman2008determining}, require only one SVD as well; we believe that these methods can be adapted to the linearly-transformed spiked model, though this is outside the scope of the current paper.

\item {\bf Robustness to high levels of noise.} Most matrix completion methods have guarantees of numerical stability: when the observed entries are accurate to a certain precision, the output will be accurate to almost the same precision. However, when the noise level swamps the signal, these stability guarantees are not informative. While many matrix completion methods can be made more robust by incorporating noise regularization, EBLP is designed to directly handle the high-noise regime. In Sec.\ \ref{simu}, we show that our method is more robust to noise than regularized nuclear norm minimization.


\item {\bf Applicability to uneven sampling.} While many matrix completion methods assume that the entries are observed with equal probability, other methods allow for uneven sampling across the rows and columns. Our method of EBLP allows for a different probability in each column of $X$. In Sec.\ \ref{sec-uneven} we compare our method to competing methods when the column sampling probabilities exhibit varying degrees of non-uniformity. In particular, we compare to the OptShrink method for noisy matrix completion \citep{nadakuditi2014optshrink}, which is nearly identical to EBLP when the sampling is uniform, but is not designed for uneven sampling. We also compare to \emph{weighted} nuclear norm minimization, designed to handle the uneven sampling.


\item {\bf Precise performance guarantees.} Our shrinkage methods have precise asymptotic performance guarantees for their mean squared error.  The errors can be estimated from the observations themselves.
\end{enumerate}

In addition to these advantages, our method has the seeming shortcoming that unlike many algorithms for matrix completion, it never yields exact recovery. However, our methods lead to \emph{consistent} estimators in the low-noise regime. In our model low noise corresponds to large spikes $\ell$. It is easy to see that taking $\ell \to\infty$ we obtain an asymptotic MSE of $\E\|X_i - \hat X_i\|^2 = O(1)$, whereas $\E\|X_i\|^2 = \ell$. Thus the correlation $\text{corr}(\hat X_i,X_i)\to 1$ in probability, and we get consistent estimators. Thus we still have good performance in low noise.



%
%
%

\subsection{Simulations}
\label{simu}


In this section, we illustrate the finite-sample properties of our proposed EBLP with noise whitening. We compare this method to three other methods found in the literature. First is the OptSpace method of \cite{keshavan2010matrix}. This algorithm is designed for uniform sampling of the matrix and relatively low noise levels, although a regularized version for larger noise has been proposed as well \citep{keshavan2010regularization}. As we will see, OptSpace (without regularization) typically performs well in the low-noise regime, but breaks down when the noise is too high. We use the MATLAB code provided by Sewoong Oh on his website \url{http://swoh.web.engr.illinois.edu/software/optspace/code.html}. We note that, like EBLP, OptSpace makes use of a user-provided rank.

The second method is nuclear norm-regularized least squares (NNRLS), as described in \cite{candes2010noise}. In the case of uniform sampling, we minimize the loss function $\frac{1}{2}\| X_\Omega - Y_\Omega\|^2 + w \cdot \| X\|_*$, where $\|\cdot\|_*$ denotes the nuclear norm and $X_\Omega$ denotes the vector of $X$'s values on the set of observed entries $\Omega$. Following the recommendation in \cite{candes2010noise} we take $w$ to be the operator norm of the pure subsampled noise term; that is, $w = \| E_\Omega \|$, where $E$ is the matrix of noise. With this choice of parameter, when the input data is indistinguishable from pure noise the estimator returned is the zero matrix. When the noise is white noise with variance $\sigma^2$, then $w = \sigma (\sqrt{p} + \sqrt{n}) \sqrt{|\Omega| / (pn)}$ at noise variance $\sigma^2$. If the noise is colored, we determine $w$ by simulation; we note that the Spectrode method of \cite{dobriban2015efficient} might offer an alternative means of determining $w$. To solve the minimization, we use the accelerated gradient method of \cite{ji2009agm}.

When the sampling probabilities differ across the columns of $X$, we compare to a weighted nuclear norm minimization. This minimizes the loss function $\frac{1}{2}\| X_\Omega - Y_\Omega\|^2 + w \cdot \|  X C_i\|_*$, where $C$ is the diagonal matrix with entries $C_{ii} = \sqrt{p}_i$, and $p_i$ is the probability that column $i$ is sampled. Again, we choose $w$ so that if there is no signal (i.e.\ $X=0$), then the zero matrix is returned. This method has been widely studied \citep{srebro2010collaborative,negahban2011estimation,klopp2014noisy,chen2015completing}.

The third method is OptShrink \citep{nadakuditi2014optshrink}. OptShrink assumes the sampling of the matrix is uniform; when this is the case, the method is essentially identical to EBLP without whitening. However, for non-uniform sampling we find the EBLP outperforms OptShrink, especially as the noise level increases. In Sec.\ \ref{sec-colored}, we also compare EBLP with whitening to OptShrink (which does not perform whitening) with colored noise; we find that whitening improves performance as the overall noise level increases. When using EBLP and OptShrink with data that is not mean zero, we estimate the mean using the available-case estimator, and subtract it before shrinkage.

In Sec.\ \ref{sec-in-oos}, we compare in-sample and out-of-sample EBLP. We demonstrate a very good agreement between the RMSEs, as predicted by Thm.\ \ref{oos-prop}, especially at high sampling rates.


In Secs.\ \ref{sec-sparsity}, \ref{sec-uneven} and \ref{sec-colored}, we used the following experimental protocol. The signals $X_i$ are drawn from a rank 10 model, with eigenvalues $1,2,\dots,10$, and random mean. Except for Sec.\ \ref{sec-sparsity}, the PCs $u_1,\dots,u_{10}$ were chosen to span a completely random 10-dimensional subspace of $\mathbb{R}^{300}$. We used the aspect ratio $\gamma = 0.8$, corresponding to a sample size of $n = 375$. The random variables $z_{ik}$ were taken to be Gaussian, as was the additive noise. The matrices $A_i$ are random coordinate selection operators, with each coordinate chosen with a given probability. When each entry of the matrix has probability $\delta$ of being selected, we will call $\delta$ the \emph{sampling rate}.

We measure the accuracy of a predictor $\hat{X}$ of the matrix $X$ using the root mean squared error, defined by
\begin{math}
    \|\hat{X} - X\|_F / \|X\|_F.
\end{math}
For each experiment, we plot the RMSEs of the different algorithms for forty runs of the experiment at increasing noise levels $\sigma$. The code for these experiments, as well as a suite of MATLAB codes for singular value shrinkage and EBLP, can be found online at \url{https://github.com/wleeb/opt-pred}.

\subsubsection{Sparsity of the PCs}
\label{sec-sparsity}

\begin{figure}[h]
\centering
  \includegraphics[scale=.33]{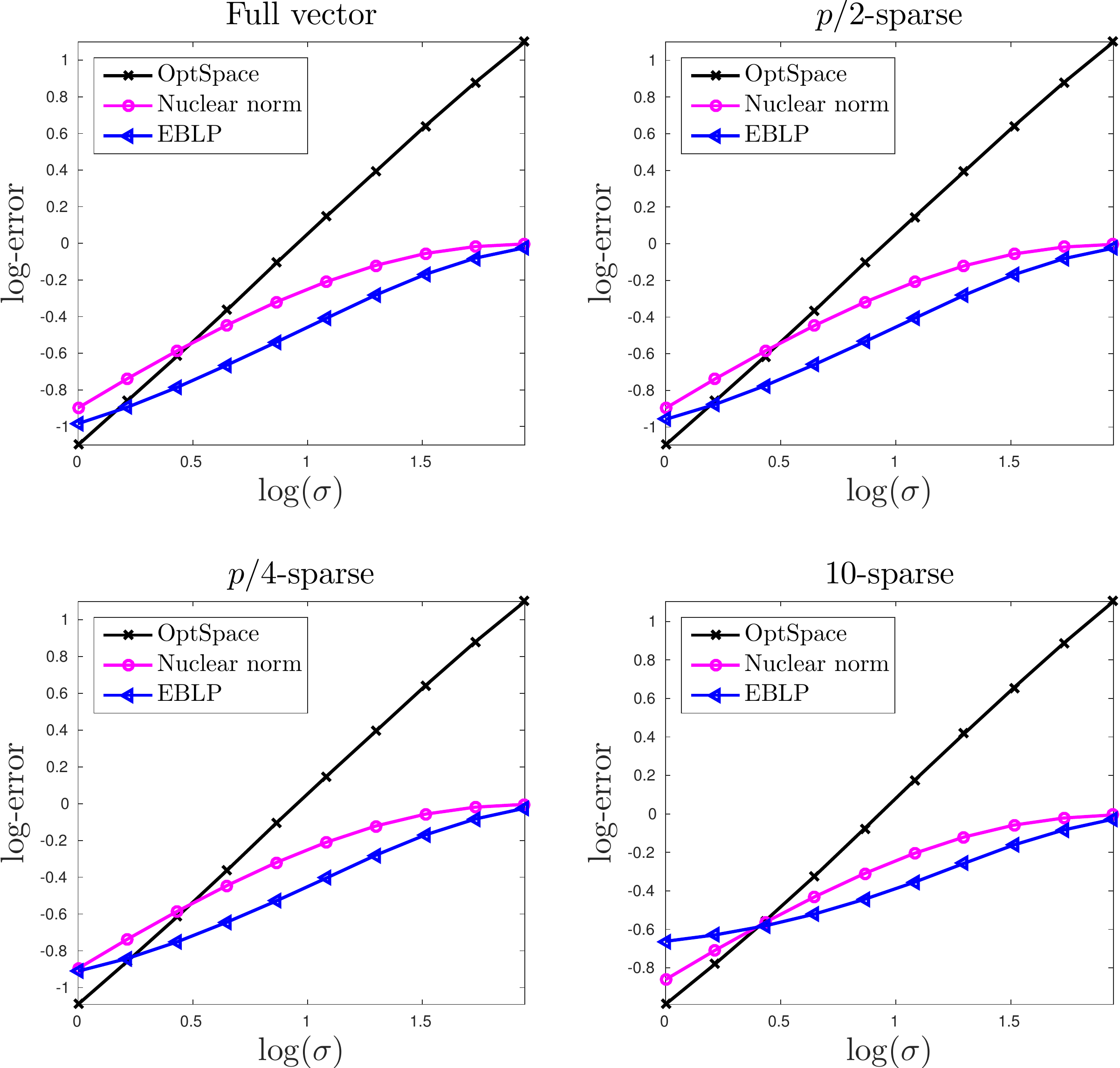}
\caption{
Log-RMSEs against log-noise for matrix completion. Each plot shows a different amount of sparsity in the PCs $u_1,\dots,u_{10}$.
}
\label{fig-sparsity}
\end{figure}

We compare the matrix completion algorithms when the PCs $u_1,\dots,u_{10}$ have different amounts of sparsity. We say that a vector is $m$-sparse if only $m$ coordinates are non-zero; we consider the cases where all the PCs are 10-sparse, $p/4$-sparse, $p/2$-sparse, and dense. We show the results in Fig.\ \ref{fig-sparsity}. Note that EBLP outperforms OptSpace and NNRLS at high noise levels, while it does worse than OptSpace at low noise levels in all sparsity regimes, and worse than both competing methods at low noise levels when the PCs are sparse.

\subsubsection{Uneven sampling}
\label{sec-uneven}

In this experiment, each coordinate is assigned a different probability of being selected, where the probabilities range linearly from $\delta$ to $1-\delta$ for $\delta \in (0,1)$. In addition to NNRLS and OptSpace, we also compare EBLP to OptShrink \citep{nadakuditi2014optshrink}, which assumes uniform sampling. With uniform sampling, the two procedures are nearly identical. However, EBLP performs better when the sampling is non-uniform.

\begin{figure}[h]
\centering
  \includegraphics[scale=.33]{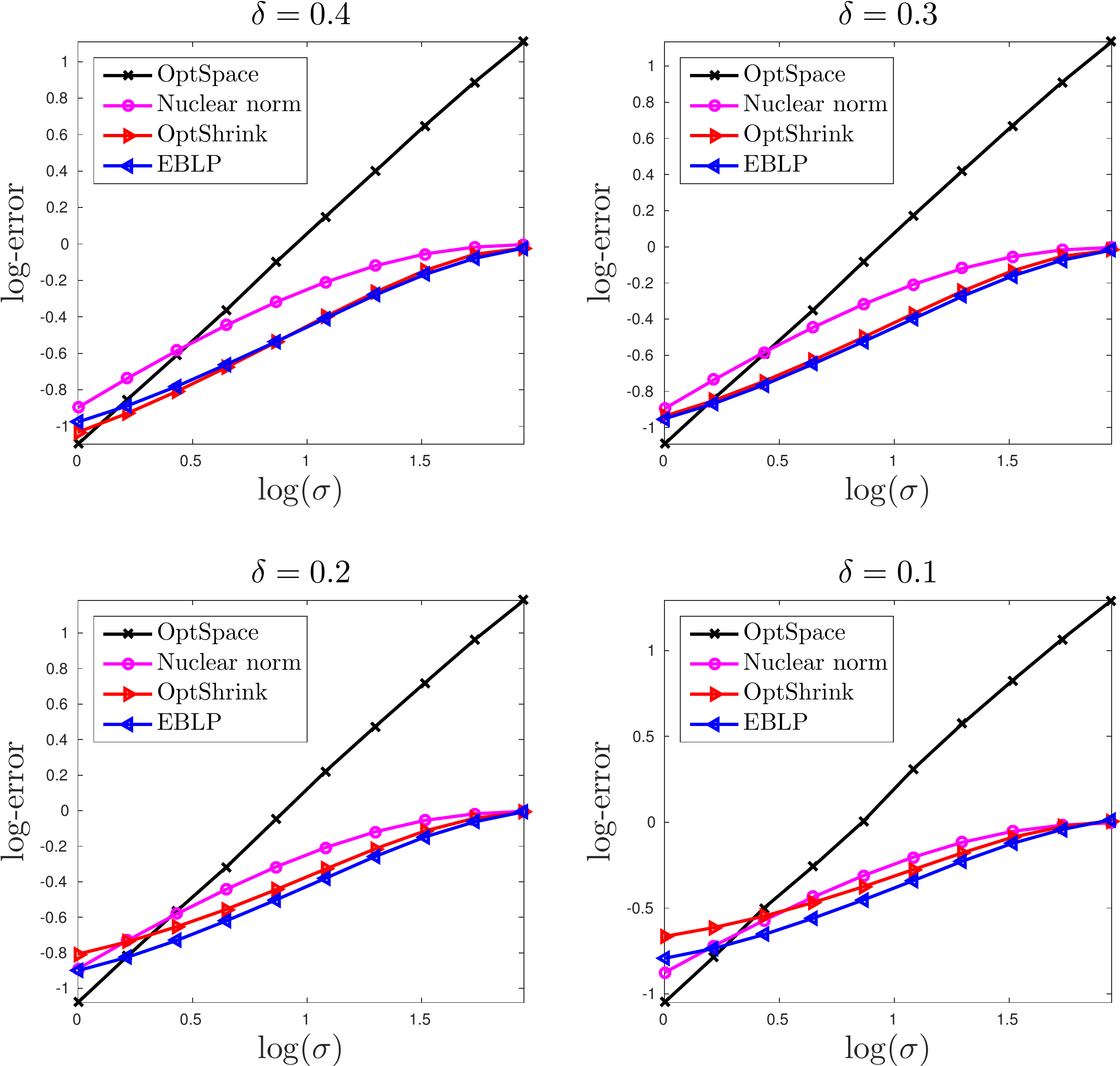}
\caption{
Log-RMSEs against log-noise for matrix completion. Each plot shows a different unevenness of sampling across the coordinates, with sampling probabilities ranging linearly from $\delta$ to $1-\delta$.
}
\label{fig-sparsity}
\end{figure}

\subsubsection{Colored noise}
\label{sec-colored}

We use colored noise whose covariance has condition number $\kappa > 1$. The noise covariance's eigenvalues increase linearly with the coordinates while having overall norm $p = 300$. In each experiment the noise is then multiplied by $\sigma$ to increase the overall variance of the noise while maintaining the condition number. We subsample uniformly with probability 0.5. Again, we compare EBLP with whitening to NNRLS, OptSpace, and OptShrink (which does not whiten). We observe that at high noise levels, EBLP with whitening outperforms OptShrink, while OptShrink performs better at low noise levels; and this effect increases with larger $\kappa$.

\begin{figure}[h]
\centering
  \includegraphics[scale=0.33]{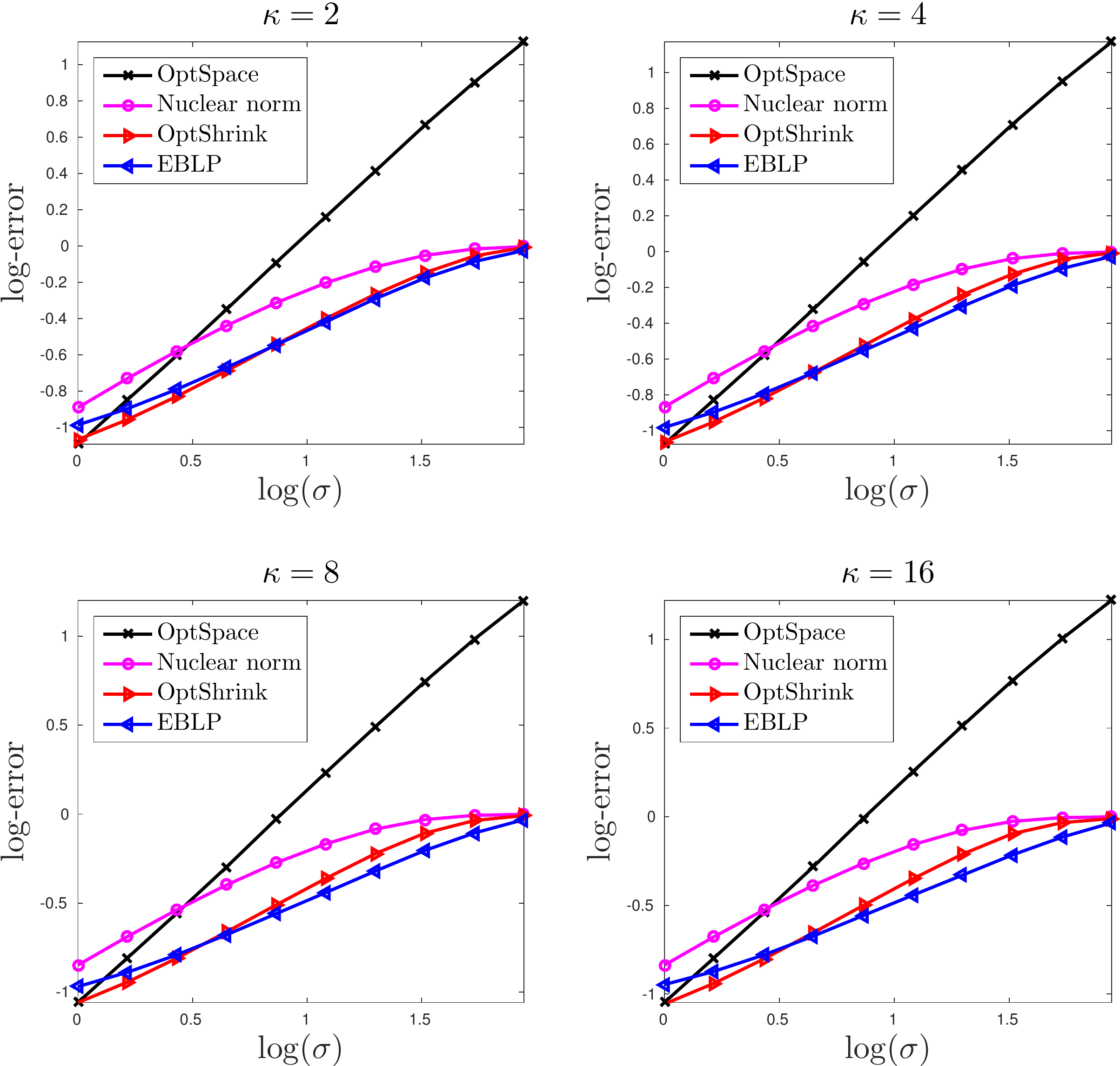}
\caption{
Log-RMSEs against log-noise for matrix completion. Each plot shows a different condition number $\kappa$ of the noise covariance matrix, reflecting different amounts of heterogeneity in the noise.
}
\label{fig-uneven}
\end{figure}

\subsubsection{In-sample vs.\ out-of-sample EBLP}
\label{sec-in-oos}

In this experiment, we compare the performance of in-sample and out-of-sample EBLP. Thm.\ \ref{oos-prop} predicts that asymptotically, the MSE of the two methods are identical. We illustrate this result in the finite-sample setting.

We fixed a dimension $p = 500$ and sampling rate $\delta$, and generated random values of $n > p$ and $\ell > 0$. For each set of values, we randomly generated two rank 1 signal matrices of size $n$-by-$p$, $X_{in}$ and $X_{out}$, added Gaussian noise, and subsampled these matrices uniformly at rate $\delta$ to obtain the backprojected observations $\tilde{B}_{in}$ and $\tilde{B}_{out}$. We apply the in-sample EBLP on $\tilde{B}_{in}$ to obtain $\hat{X}_{in}$, and using the singular vectors of $\tilde{B}_{in}$, we apply the out-of-sample EBLP to $\tilde{B}_{out}$ to obtain $\hat{X}_{out}$.

In Fig.\ \ref{fig-oos}, we show scatterplots of the RMSEs for the in-sample and out-of-sample data for each value of $n$ and $\ell$. We also plot the line $x=y$ for reference. The errors of in-sample and out-of-sample EBLP are very close to each other, though the finite sample effects are more prominent for small $\delta$.

\begin{figure}[h]
\centering
  \includegraphics[scale=0.33]{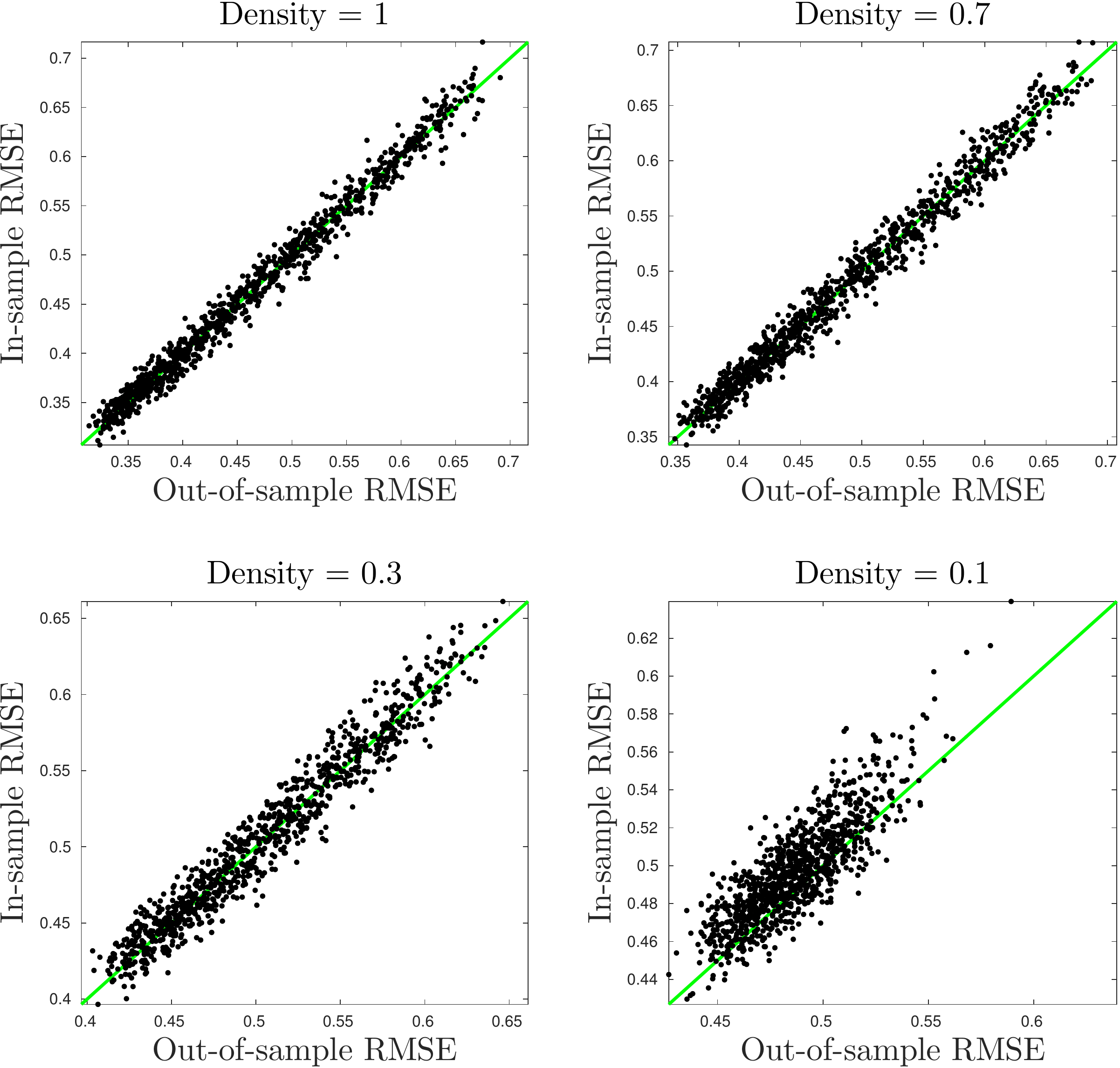}
\caption{
Scatterplots of the RMSEs of in-sample EBLP against out-of-sample EBLP for different sampling densities.
}
\label{fig-oos}
\end{figure}

\section{Conclusion}

In this paper we considered the linearly transformed spiked model, and developed asymptotically optimal EBLP methods for predicting the unobserved signals in the commutative case of the model, under high-dimensional asympotics. For missing data, we showed in simulations that our methods are faster, more robust to noise and to unequal sampling than well-known matrix completion methods.

There are many exciting opportunities for future research. One problem is to extend our methods beyond the commutative case. This is challenging because the asymptotic spectrum of the backprojected matrix $B$ becomes harder to characterize, and new proof methods are needed. Another problem is to understand the possible benefits of whitening. We saw that whitening enables fast optimal shrinkage, but understanding when it leads to improved denoising remains an open problem.

\section*{Acknowledgements}
The authors thank Joakim And\'{e}n, Tejal Bhamre, Xiuyuan Cheng, David Donoho, and Iain Johnstone for helpful discussions on this work. The authors are grateful to Matan Gavish for valuable suggestions on an earlier version of the manuscript. Edgar Dobriban was supported in part by NSF grant DMS-1407813, and by an HHMI International Student Research Fellowship. William Leeb was supported by the Simons Collaboration on Algorithms and Geometry. Amit Singer was partially supported by Award Number R01GM090200 from the NIGMS, FA9550-17-1-0291 from AFOSR, Simons Foundation Investigator Award and Simons Collaboration on Algorithms and Geometry, and the Moore Foundation Data-Driven Discovery Investigator Award.

%
%
%

\section{Proofs}

\subsection{Proof of Thm.\ \ref{spec}}
\label{spec_pf}

We present the proof of Thm.\ \ref{spec} for the backprojected matrix $B$. The proof for the normalized matrix $\tilde B$ is identical and omitted for brevity. The proof spans multiple sections, until Sec.\ \ref{pf_qf_lem3}.   To illustrate the idea, we first prove the single-spiked case, when $r=1$. The proof of the multispiked extension is provided in Sec.\ \ref{pf_multi}. 

Since $A_i^\top A_i = M+E_i$,  we have the following decomposition for the backprojected observations $B_i$:
$$
B_i =  A_i^\top Y_i  = M X_i + A_i^\top \varepsilon_i + E_i X_i.
$$
The first key observation is that after backprojection, we still have an \emph{approximate} spiked model. The new signal component is $MX_i$, the new noise is $ A_i^\top \varepsilon_i $. The error term $ E_i X_i$ has an asymptotically negligible contribution in operator norm, as shown below. The proof is provided in Sec.\ \ref{pf_op_norm_6}.

\begin{lem}
Let $E^*$ be the matrix formed by the vectors $n^{-1/2} E_i X_i$. Then the operator norm $\|E^*\|\to 0$ a.s.
\label{op_norm_6}
\end{lem}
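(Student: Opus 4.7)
The plan is to exploit the explicit entry-wise structure of $E^*$ and reduce the operator-norm bound to three elementary factors. I would first treat the rank-$1$ case, where $X_i = \ell^{1/2} z_i u$, so that the $(i,j)$ entry of $E^*$ equals $n^{-1/2} \ell^{1/2} z_i u_j E_{ij}$. This yields the matrix factorization
\[
E^* \;=\; \frac{\ell^{1/2}}{\sqrt{n}}\, D_z\, \mathcal{E}\, D_u,
\]
where $\mathcal{E}$ is the $n \times p$ matrix with entries $E_{ij}$, $D_z = \textnormal{diag}(z_1,\ldots,z_n)$, and $D_u = \textnormal{diag}(u_1,\ldots,u_p)$. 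Submultiplicativity of the operator norm then gives
\[
\|E^*\|_{\op} \;\le\; \frac{\ell^{1/2}}{\sqrt{n}}\, \bigl(\max_{i \le n} |z_i|\bigr)\, \|\mathcal{E}\|_{\op}\, \|u\|_\infty.
\]

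Next I would bound each factor separately. The bound $\|u\|_\infty \to 0$ is immediate from the delocalization condition in Assumption~\ref{aaa0}. Since the rows of $\mathcal{E}$ are iid with mean zero and the entries $E_{ij}$ have bounded $(6+\phi)$-th moment, a standard Bai--Yin-type result for random matrices with independent rows of bounded-operator-norm covariance yields $\|\mathcal{E}\|_{\op} = O(\sqrt{n})$ almost surely. Finally, a Borel--Cantelli argument based on the moment assumption on $z_{ij}$ gives $\max_i |z_i| = o(n^{1/m + \epsilon})$ a.s.\ for any $\epsilon > 0$ under the polynomial moment assumption, and $\max_i |z_i| = O(\sqrt{\log n})$ a.s.\ under the sub-Gaussian assumption. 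Combining these,
\[
\|E^*\|_{\op} \;=\; O\bigl(\|u\|_\infty \cdot \max_i |z_i|\bigr),
\]
which vanishes almost surely by the matching delocalization rate in Assumption~\ref{aaa0}: in the polynomial case, $\|u\|_\infty \cdot n^{1/m + \epsilon} = o(n^{-(1+c)/m + \epsilon}) \to 0$ for $\epsilon$ sufficiently small; in the sub-Gaussian case, $\|u\|_\infty \sqrt{\log n} \to 0$ directly.

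The general rank-$r$ case reduces to the rank-$1$ argument by linearity. Writing
\[
E^* \;=\; \frac{1}{\sqrt{n}}\sum_{k=1}^{r} \ell_k^{1/2}\, D_{z_k}\, \mathcal{E}\, D_{u_k},
\]
with $D_{z_k} = \textnormal{diag}(z_{1k},\ldots,z_{nk})$ and $D_{u_k} = \textnormal{diag}(u_{k1},\ldots,u_{kp})$, the triangle inequality and the rank-$1$ estimate applied to each of the $r$ (fixed-number) summands gives the result. I expect the main technical obstacle to be securing the sharp $O(\sqrt{n})$ estimate for $\|\mathcal{E}\|_{\op}$: the rows of $\mathcal{E}$ are iid, but entries within a row need not be independent, since Assumption~\ref{aaa222} only assumes independence of the coordinates of the backprojected noise $\ep_i^*$, not of $E_{ij}$. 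The key step is thus to verify, under the commutativity hypothesis and the moment bounds, that the per-row covariance of $\mathcal{E}$ has bounded operator norm so that standard sample-covariance concentration applies; once this is in hand, the rest of the argument is immediate.
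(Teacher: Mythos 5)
Your decomposition and outer scaffolding match the paper's proof exactly: you write $E^* = n^{-1/2}\ell^{1/2} D_z\,\mathcal{E}\,D_u$, which is the same as the paper's Hadamard-product identity $a^\top E^* c = n^{-1/2}\ell^{1/2}(a\odot z)^\top E\,(c\odot u)$; the submultiplicativity bound $\|E^*\| \le \ell^{1/2}\|n^{-1/2}\mathcal{E}\|\,\|z\|_\infty\|u\|_\infty$, the Borel--Cantelli estimate of $\|z\|_\infty$ (both polynomial and sub-Gaussian cases), the appeal to Assumption~\ref{aaa0}, and the rank-$r$ reduction by linearity all line up with the paper (which treats $r=1$ here and defers multispike to Sec.~\ref{pf_multi}).

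The step that does not hold up is the bound $\|\mathcal{E}\|_{\op} = O(\sqrt n)$. You attribute it to ``a Bai--Yin-type result for random matrices with independent rows of bounded-operator-norm covariance,'' and later say the remaining work is to ``verify that the per-row covariance of $\mathcal{E}$ has bounded operator norm so that standard sample-covariance concentration applies.'' In the proportional regime $p \asymp n$ this is not sufficient. Iid rows with bounded per-row covariance $\Sigma$ do not by themselves give $\|\mathcal{E}\mathcal{E}^\top/n\|_{\op} = O(1)$: for instance, if each row is $\sqrt{p}\,e_{J_i}$ with $J_i$ uniform on $\{1,\dots,p\}$, then $\Sigma = I_p$ but $\|\mathcal{E}^\top\mathcal{E}/n\|_{\op} \asymp \log n/\log\log n \to \infty$. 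Largest-eigenvalue control in this regime requires additional structure within the rows — entry-wise independence with moment bounds, or a linear image of such. The paper's route is precisely this: it adapts the proof of Bai--Silverstein's Theorem~5.1/5.8 and Corollary~6.6 (largest eigenvalue of a sample covariance matrix), modifying the truncation step to allow non-identical distributions under the $(6+\phi)$-moment bound. That argument uses independence of $E_{ij}$ across \emph{both} $i$ and $j$, not merely across $i$; you correctly note that Assumption~\ref{aaa222} grants independence only for $\ep^*_{ij}$, so this independence of the $E_{ij}$ is used implicitly. Your proposed replacement condition (bounded covariance) is weaker than what is actually needed and would not close the lemma; what you would need to verify is the entry-wise independence (or an equivalent concentration hypothesis on the rows), after which the paper's modification of the Bai--Silverstein truncation argument carries through.
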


Since the claims of our theorem concern the limits of the spectral distribution, singular values, and angles between population and sample singular vectors, all of which are continuous functions with respect to the operator norm, it follows that we can equivalently prove the theorem for 
$$
B_i^* = \ell^{1/2} z_i \cdot M u + A_i^\top  \ep_i.
$$

Let us denote by $\nu=M u/\xi^{1/2}$ the normalized backprojected signal, where $\xi  = |M u|^2 \to \tau$.  We will extend the technique of \cite{benaych2012singular} to characterize the spiked eigenvalues in this model. We denote the normalized vector $Z = n^{-1/2}\tilde Z$, with $\tilde Z = (z_1,\ldots,z_n)^\top$, the normalized noise $N = n^{-1/2}\mathcal{E}^*$, where $\mathcal{E}^* = (\ep_1^*,\ldots,\ep_n^*)^\top = (A_1^\top \ep_1,\ldots,A_n^\top \ep_n)^\top$ and the normalized backprojected data matrix $\tilde B^* = n^{-1/2}B^*$, where $B^* = (B_1^*,\ldots,B_n^*)^\top$. Then, our model is 
\beq
\label{sig_noise_2}
\tilde B^* = (\xi\ell)^{1/2}  \cdot Z \nu^\top + N. 
\eeq

 We will assume that $n,p \to \infty$ such that $p/n \to \gamma>0$. For simplicity of notation, we will first assume that $n \le p$, implying that $\gamma \ge 1$. It is easy to see that everything works when $n \ge p$.

By Lemma 4.1 of \cite{benaych2012singular}, the singular values of $ \tilde B^* $ that are not singular values of $N$ are the positive reals $t$ such that the 2-by-2 matrix
\begin{align*}
M_n(t) = 
\begin{bmatrix} 
t \cdot Z^\top (t^2 I_n - NN^\top)^{-1} Z & 
Z^\top (t^2 I_n - NN^\top)^{-1} N\nu \\ 
\nu^\top N^\top (t^2 I_n - NN^\top)^{-1} Z & 
t \cdot \nu^\top (t^2 I_p - N^\top N)^{-1} \nu
\end{bmatrix} 
-  
\begin{bmatrix} 0 & (\xi\ell)^{-1/2} \\  (\xi\ell)^{-1/2} & 0 \end{bmatrix}
\end{align*}

is not invertible, i.e., $\det[M_n(t)]=0$. We will find almost sure limits of the entries of $M_n(t)$, to show that it converges to a deterministic matrix $M(t)$. Solving the equation $\det[M(t)] = 0$ will provide an equation for the almost sure limit of the spiked singular values of $\tilde B^*$.  For this we will prove the following results: 

\begin{lem}[The noise matrix] 
\label{noise_lem}
The noise matrix $N$ has the following properties:
\benum
\item The eigenvalue distribution of $N^\top N$ converges almost surely (a.s.) to the Marchenko-Pastur distribution $F_{\gamma,H}$ with aspect ratio $\gamma \ge 1$. 
\item The top eigenvalue of $N^\top N$ converges a.s.\ to the upper edge $b_H^2$ of the support of $F_{\gamma,H}$. 
\eenum
\end{lem}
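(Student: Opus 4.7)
The plan is to establish the two parts of this lemma by appealing to generalized Marchenko-Pastur theory for matrices with independent but not identically distributed entries, combined with Bai-Yin-type sharp edge bounds. By Assumption \ref{aaa222}, the entries of $\mathcal{E}^*$ are jointly independent with mean zero, and the empirical distribution $H_p$ of their variances converges almost surely to $H$. Thus $N = n^{-1/2}\mathcal{E}^*$ is a matrix with independent centered entries whose aggregate variance profile is governed by $H$, which places us in a setting where standard random matrix results apply.

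For part 1, I would first condition on the collection $\{A_i\}_{i=1}^n$. Conditionally, $N$ has independent centered entries with deterministic variances $\sigma_{ij}^2 = \text{Var}((\ep_i^*)_j)$, which depend both on $i$ (through the random $A_i$) and on $j$. Under this conditional setup, the classical generalized Marchenko-Pastur theorem (see, e.g., Bai-Silverstein or Silverstein 1995) implies that the empirical spectral distribution of $N^\top N$ converges to $F_{\gamma,H}$, provided the aggregate empirical variance distribution tends to $H$ and a Lindeberg-type uniform integrability condition holds on the entries. The $(6+\phi)$-th moment assumption on $\ep^*_{ij}$ yields the latter immediately via truncation. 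Unconditioning then uses the iid structure of the $A_i$: by a strong law argument, $H_p$ (which is itself a function of the $\{A_i\}$) converges almost surely to the deterministic limit $H$, so the conditional a.s.\ convergence upgrades to unconditional a.s.\ convergence.

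For part 2, I would prove the ``no eigenvalues outside the bulk'' property $\lambda_{\max}(N^\top N) \to b_H^2$ a.s. The lower bound is automatic from part 1 together with the ``maximal noise variance'' assumption, which forces $\sup \operatorname{supp}(H_p) \to \sup \operatorname{supp}(H) = b_H$. The upper bound is the nontrivial part: one first truncates entries at a slowly growing level (using the $6+\phi$ moment bound to control the tail contribution), then applies either a Bai-Yin style moment method or a Stieltjes-transform argument of Bai-Silverstein type to show that no eigenvalue exceeds $b_H^2 + \delta$ for any $\delta>0$, almost surely, for all large $n$. The edge-universality literature for sample covariance matrices with a variance profile provides the required template.

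The main obstacle is the non-separable random variance profile: because the $\sigma_{ij}^2$ depend on $i$ through the random matrices $A_i$, the profile is not of the convenient product form $\sigma_i^2 \tau_j^2$, and the variances themselves are random rather than deterministic. The conditioning argument above is the cleanest way around this, but one must verify that the relevant constants appearing in the MP fixed-point equation depend on the $A_i$'s only through $H_p$ (so the a.s.\ limit is deterministic) and that the Lindeberg and edge-control arguments go through uniformly in the realization of the $A_i$'s. The uniform lower bound on the diagonal of $M$ combined with the commutativity condition (Assumption \ref{aaa111}) and the moment bounds are precisely what is needed to carry out this verification.
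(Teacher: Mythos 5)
Your conditioning step is where this departs from the paper, and it introduces problems the paper deliberately avoids. Assumption~\ref{aaa222} asserts that the vectors $\ep_i^* = A_i^\top\ep_i$ have independent entries \emph{unconditionally}, i.e.\ after averaging over the randomness in both $A_i$ and $\ep_i$, and that the rows $\ep_1^*,\ldots,\ep_n^*$ are iid. The associated variance profile $\sigma_j^2 = \Var[(\ep_i^*)_j]$ therefore depends only on the column index $j$, and $H_p$ is its empirical distribution over $j=1,\ldots,p$. The paper's proof works directly in this unconditional setting: it applies Bai--Silverstein Thm.~4.3 for the bulk (noting the standard truncation argument removes the identical-distribution requirement), and for the edge it invokes the modification of Bai--Silverstein Cor.~6.6 already established within the proof of Lemma~\ref{op_norm_6}, which is exactly where the $6+\phi$ moment bound and $\sup\supp(H_p)\to\sup\supp(H)$ hypotheses are used.

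By conditioning on $\{A_i\}$ you convert this into the harder situation of a doubly-indexed profile $\sigma_{ij}^2$, and you also lose the guarantee of entrywise independence: conditionally on $A_i$, the covariance of $\ep_i^*$ is $A_i^\top\Sigma_\ep A_i$, which need not be diagonal even though $A_i^\top A_i$ is. Assumption~\ref{aaa222} does not promise conditional independence, so the ``classical generalized MP theorem'' you want to apply conditionally is not obviously in scope. Your own final paragraph correctly identifies the non-separable random profile as the obstacle, but the fix is not to verify uniformity in the realization of the $A_i$'s; it is to \emph{not condition} in the first place, since the unconditional law of $\mathcal{E}^*$ is exactly the iid-rows, independent-entries, $j$-indexed-variance setting where the standard machinery applies directly. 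Apart from this, your high-level ingredients (truncation via the $6+\phi$ moment, Lindeberg, bulk + no-eigenvalue-outside-support for the edge) are the right ones and match the paper's; the paper just packages the edge argument as a reuse of the Cor.~6.6 modification from Lemma~\ref{op_norm_6} rather than re-deriving it.
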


This is proved in Sec.\ \ref{pf_noise_lem}. For brevity we write $b  = b_H$.  Since $\tilde B^*$ is a rank-one perturbation of $N$, it follows that the eigenvalue distribution of $\tilde B^{*\top} \tilde B^*$ also converges to the MP law $F_{\gamma,H}$. This proves the first claim of Thm \ref{spec}. 

Moreover, since $NN^\top$ has the same $n$ eigenvalues as the nonzero eigenvalues of $N^\top N$,  the two facts in Lemma \ref{noise_lem} imply that when $t>b$, $n^{-1}\tr (t^2 I_n - NN^\top)^{-1} \to \int (t^2-x)^{-1} d\underline F_{\gamma,H}(x) = -\underline m(t^2)$. Here $\underline F_{\gamma,H}(x)  = \gamma F_{\gamma,H}(x)  + (1-\gamma)\delta_0$ and $\underline m = \underline m_{\gamma,H}$ is the Stieltjes transform of $\underline F_{\gamma,H}$. Clearly this convergence is uniform in $t$. As a special note, when $t$ is a singular value of the random matrix $N$, we formally define $(t^2 I_p - N^\top N)^{-1} =0$ and $(t^2 I_n - NN^\top)^{-1} =0$. When $t>b$, the complement of this event happens a.s. In fact, from Lemma \ref{noise_lem} it follows that $(t^2 I_p - N^\top N)^{-1}$ has a.s. bounded operator norm. Next we control the quadratic forms in the matrix $M_n$.

\begin{lem}[The quadratic forms] 
\label{qf_lem}
When $t>b$, the quadratic forms in the matrix $M_n(t)$ have the following properties:
\benum
\item $Z^\top (t^2 I_n - NN^\top)^{-1} Z - n^{-1}\tr (t^2 I_n - NN^\top)^{-1}\to 0$ a.s.
\item $Z^\top (t^2 I_n - NN^\top)^{-1} N\nu \to 0$ a.s.
\item $\nu^\top (t^2 I_p - N^\top N)^{-1} \nu \to -m(t^2)$ a.s., where $m = m_{\gamma,H}$ is the Stieltjes transform of the Marchenko-Pastur distribution $F_{\gamma,H}$. 
\eenum

Moreover the convergence of all three terms is uniform in $t>b+c$, for any $c>0$.
\end{lem}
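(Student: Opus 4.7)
The plan is to prove the three claims separately. Claims 1 and 2 follow from standard quadratic- and linear-form concentration, exploiting the fact that $Z$ is independent of $N$ and $\nu$. Claim 3 is the main step, requiring a resolvent deterministic equivalent combined with the generic-signal Assumption \ref{aaa}. In all three cases, Lemma \ref{noise_lem} supplies the key a.s.\ operator-norm bound $\|(t^2 I_n - NN^\top)^{-1}\|_{\mathrm{op}} \le C(c)$, uniformly on $t \ge b + c$, which controls the resolvents. Uniformity in $t$ on any $[b+c,\infty)$ then follows from the monotonicity of both sides in $t^2$ combined with pointwise a.s.\ convergence on a countable dense subset.

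For claim 1, let $A(t) = (t^2 I_n - NN^\top)^{-1}$. Conditional on $N$, the components of $\sqrt{n}\,Z$ are iid with mean zero, unit variance, and higher moments guaranteed by Assumption \ref{aaa0}. The Bai--Silverstein / Hanson--Wright quadratic-form inequality then yields
$$
\mathbb{E}\bigl[\,|Z^\top A(t) Z - n^{-1}\tr A(t)|^{2q}\ \bigm|\ N\,\bigr]
\le C_q\,\|A(t)\|_{\mathrm{op}}^{2q}\, n^{-q},
$$
for $q$ large enough, and Borel--Cantelli delivers almost sure convergence. For claim 2, set $w(t) = A(t)N\nu$; by Lemma \ref{noise_lem} the norm $\|w(t)\|$ is a.s.\ uniformly bounded on $t \ge b + c$. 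Conditional on $N$, $Z^\top w(t)$ has mean zero and conditional variance $\|w(t)\|^2/n \to 0$, and a higher-moment Borel--Cantelli argument yields $Z^\top w(t) \to 0$ a.s.

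Claim 3 is the main technical step. Since $N^\top N$ is a generalized sample covariance whose rows have independent entries with diagonal population covariance $P$ (Assumption \ref{aaa222}), its ESD converges to $F_{\gamma,H}$. The Bai--Silverstein / Ledoit--P\'ech\'e resolvent deterministic-equivalent theory, combined with the operator-norm bound from Lemma \ref{noise_lem}, gives that for any deterministic vector $v$ of bounded norm,
$$
v^\top (zI_p - N^\top N)^{-1} v - v^\top \bigl(zI_p - c(z) P\bigr)^{-1} v \to 0 \quad \mathrm{a.s.},
$$
where $c(z) = 1 - \gamma - \gamma z\,m_{\gamma,H}(z)$. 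Because $P$ is diagonal, $v^\top(zI_p - c(z)P)^{-1}v = -c(z)^{-1}\,v^\top(P - (z/c(z))I_p)^{-1}v$. Setting $v = \nu$ and invoking the generic-signal Assumption \ref{aaa} (in its unnormalized-$B$ form, with $u_j$ replaced by $\nu_j = Mu_j/\|Mu_j\|$) reduces this quadratic form to the deterministic limit $-c(z)^{-1} m_H(z/c(z))$. Silverstein's fixed-point identity $m_{\gamma,H}(z) = c(z)^{-1} m_H(z/c(z))$ then shows the limit equals $-m_{\gamma,H}(z)$; substituting $z = t^2$ completes claim 3.

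The main obstacle is claim 3: one must (i) upgrade the deterministic equivalent from the normalized-trace level to the quadratic-form level, which requires the uniform resolvent bounds of Lemma \ref{noise_lem} and the row-independence in Assumption \ref{aaa222}, and (ii) identify the resulting deterministic quadratic form with $-m_{\gamma,H}(t^2)$ by combining the generic-signal assumption with Silverstein's self-consistent equation. The passage between the normalized ($\tilde B$) and unnormalized ($B$) settings, swapping $u_j$ for $\nu_j$ and $I_p$ for $M$ where needed, is routine bookkeeping.
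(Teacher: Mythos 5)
Your treatment of Parts 1 and 2 matches the paper's: Part 1 is the Bai--Silverstein quadratic-form concentration applied conditionally on $N$, and Part 2 reduces to controlling $Z^\top w$ where $w = (t^2 I_n - NN^\top)^{-1}N\nu$ has a.s.\ bounded norm, followed by a moment bound and Borel--Cantelli. For Part 3, your overall strategy is also the same as the paper's: treat the random matrix resolvent, not the vector, as the source of randomness, and invoke a deterministic equivalent for the bilinear form $\nu^\top (z I_p - N^\top N)^{-1}\nu$ together with Assumption~\ref{aaa}. You identify the limit by first obtaining the deterministic equivalent $\nu^\top(zI_p - c(z)P)^{-1}\nu$ and then unwinding through Silverstein's self-consistent equation $m_{\gamma,H}(z) = c(z)^{-1} m_H(z/c(z))$, whereas the paper cites Theorem~1 of \cite{bai2007asymptotics} which delivers the limit $-m_{\gamma,H}(z)$ directly. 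Both routes are legitimate and your algebra checks out.

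The genuine gap is the passage from complex $z$ to real $t^2$. The deterministic-equivalent results you invoke (Bai--Miao--Pan, Ledoit--P\'ech\'e) are proved for $z$ with $\Im(z)$ bounded away from $0$, while the lemma asserts a.s.\ convergence at real points $t^2 > b^2$ on the real axis, outside the bulk. You cannot simply substitute $z = t^2$; this is precisely what the paper identifies as the key technical novelty of this lemma. The paper handles it by (a) going back into the proof of \cite{bai2007asymptotics} to check that the error bounds there are of the form $n^{-k}v^{-l}$ and therefore tolerate $v = \Im(z) = n^{-\alpha} \downarrow 0$ for small $\alpha$, and (b) using the a.s.\ operator-norm control $\|(t^2 I_p - N^\top N)^{-1}\| \le C$ (from Lemma~\ref{noise_lem}) to bound $|\nu^\top((t^2+iv)I_p - N^\top N)^{-1}\nu - \nu^\top(t^2 I_p - N^\top N)^{-1}\nu| \le v\,\|(t^2 I_p - N^\top N)^{-1}\|^2$, which vanishes as $v\to 0$. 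You mention that the operator-norm bound ``controls the resolvents'' but never say how it closes the gap to the real axis; as written the argument stops on the upper half-plane. Either reproduce the paper's rate analysis, or use a Vitali/Montel normal-families argument on a domain $\{\Re z > b^2 + \varepsilon\}$ with the a.s.\ uniform boundedness of the resolvent bilinear forms --- the paper does something of this flavor for Lemma~\ref{qf_lem2} via Lemma~\ref{holo_derivative_conv} --- but one of these must be made explicit for Part 3 to be complete.
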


This is proved in Sec.\ \ref{pf_qf_lem}.  The key technical innovation is the proof of the third part. Most results for controlling quadratic forms $x^\top A x$ are concentration bounds for random $x$. Here $x=\nu$ is fixed, and the matrix $A=(t^2 I_p - N^\top N)^{-1}$ is random instead. For this reason we adopt the ``deterministic equivalents'' technique of  \cite{bai2007asymptotics} for quantities $x^\top (z I_p - N^\top N)^{-1} x$, with the key novelty that we can take the imaginary part of the complex argument to zero. The latter observation is nontrivial, and mirrors similar techniques used recently in universality proofs in random matrix theory \citep[see e.g., the review by][]{erdos2012universality}.

Lemmas \ref{noise_lem} and \ref{qf_lem} will imply that for $t>b$, the limit of $M_n(t)$ is 
$$M(t) = 
\begin{bmatrix} 
-t \cdot \underline m(t^2) & 
-(\tau\ell)^{-1/2} \\ 
-(\tau\ell)^{-1/2} &
-t \cdot m(t^2) 
\end{bmatrix}.
$$

By the Weyl inequality, $\sigma_2(\tilde B^*) \le \sigma_2( (\xi\ell)^{1/2}  \cdot Z \nu^\top) + \sigma_1(N) = \sigma_1(N)$. Since $\sigma_1(N) \to b$ a.s. by Lemma \ref{noise_lem}, we obtain that  $\sigma_2(\tilde B^*) \to b$ a.s.  Therefore for any $\ep>0$, a.s. only $\sigma_1(\tilde B^*)$ can be a singular value of $\tilde B^*$ in $(b+\ep,\infty)$ that is not a singular value of $N$.

It is easy to check that $D(x) = x\cdot\underline m(x) m(x)$ is strictly decreasing on $(b^2,\infty)$.  Hence, denoting $h = \lim_{t\downarrow b} 1/D(t^2)$, for $\tau\ell>h$, the equation $D(t^2)=1/(\tau \ell)$ has a unique solution $t\in (b,\infty)$. By Lemma A.1 of \cite{benaych2012singular}, we conclude that for $\tau\ell>h$, $\sigma_1(\tilde B^*) \to t$ a.s., where $t$ solves the equation $\det[M(t)]=0$, or equivalently, 
$$
t^2 \cdot \underline m(t^2) m(t^2)  = \frac1{\tau\ell}.
$$
If $\tau\ell \le h$, then we note that $\det[M_n(t)]\to\det[M(t)]$ uniformly on $t>b+\ep$. Therefore, if $\det[M_n(t)]$ had a root $\sigma_1(\tilde B^*)$ in $(b+\ep,\infty)$, $\det[M(t)]$ would also need to have a root there, which is a contradiction. Therefore, we conclude $\sigma_1(\tilde B^*) \le b+\ep$ a.s., for any $\ep>0$. Since $\sigma_1(\tilde B^*) \ge \sigma_2(\tilde B^*) \to b$, we conclude that $\sigma_1(\tilde B^*) \to b$ a.s., as desired. This finishes the spike convergence claim in Thm.\ \ref{spec}.

Next, we turn to proving the convergence of the angles between the population and sample eigenvectors. Let $\hat Z$ and $\hat u$ be the singular vectors associated with the top singular value $\sigma_1(\tilde B^*)$ of $\tilde B^*$. Then, by Lemma 5.1 of \cite{benaych2012singular}, if $\sigma_1(\tilde B^*)$ is not a singular value of $X$, then the vector $\eta = (\eta_1,\eta_2)= (\nu^\top \hat u, Z^\top \hat Z)$ belongs to the kernel of the matrix $M_n(\sigma_1(\tilde B^*))$. By the above discussion, this 2-by-2 matrix is of course singular, so this provides one linear equation for the vector $\eta$ (with $R=(t^2 I_n - NN^\top)^{-1}$)
$$t \eta_1 \cdot Z^\top R Z 
+ \eta_2[ Z^\top R N \nu- (\xi \ell)^{-1/2}]=0.$$

By the same lemma cited above, it follows that we have the \emph{norm identity} (with $t = \sigma_1(\tilde B^*)$)
\beq
\label{norm_id}
t^2 \eta_1^2 \cdot Z^\top R^2 Z 
+ \eta_2^2 \cdot \nu^\top N^\top R^2 N \nu 
+ 2 t \eta_1 \eta_2  \cdot Z^\top R^2 N \nu 
= (\xi \ell)^{-1}.
\eeq

This follows from taking the norm of the equation $t \eta_1 \cdot R Z +\eta_2  \cdot R N \nu = (\xi \ell)^{-1/2} \hat Z$ (see Lemma 5.1 in \cite{benaych2012singular}). We will find the limits of the quadratic forms below.

\begin{lem}[More quadratic forms] 
\label{qf_lem2}
The quadratic forms in the norm identity have the following properties:
\benum
\item $Z^\top (t^2 I_n - NN^\top)^{-2} Z - n^{-1}\tr (t^2 I_n - NN^\top)^{-2}\to 0$ a.s.
\item $Z^\top (t^2 I_n - NN^\top)^{-2} N\nu \to 0$ a.s.
\item $\nu^\top N^\top (t^2 I_n - N N^\top)^{-2} N \nu \to  m(t^2) + t^2 m'(t^2)$ a.s., where $m$ is the Stieltjes transform of the Marchenko-Pastur distribution $ F_{\gamma,H}$. 
\eenum
\end{lem}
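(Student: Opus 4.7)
The plan is to extend the techniques of Lemma \ref{qf_lem} to the squared-resolvent quadratic forms. Parts (1) and (2) reduce to standard concentration of quadratic forms in $Z$, while part (3) follows from an algebraic identity reducing it to Lemma \ref{qf_lem}(3) and its derivative.

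For parts (1) and (2), I would condition on $N$ and invoke a trace-lemma concentration bound for quadratic forms in vectors with independent entries (e.g., Bai--Silverstein, Lemma B.26). Set $K = (t^2 I_n - NN^\top)^{-2}$; by Lemma \ref{noise_lem}, $\sigma_1(N)^2 \to b^2 < t^2$ a.s., so $\|K\|_{\op}$ stays bounded, as does $\|N\nu\|$. For (1), $\mathbb{E}[Z^\top K Z \mid N] = n^{-1}\tr K$, while the conditional variance is of order $n^{-2}\tr K^2 \to 0$; combined with the moment hypothesis on $z_{ij}$ from Assumption \ref{aaa0} and Borel--Cantelli, this yields the desired a.s.\ statement. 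For (2), $\mathbb{E}[Z^\top K N\nu \mid N] = 0$ and the conditional second moment is at most $n^{-1}\|K\|_{\op}^2\|N\nu\|^2 \to 0$, giving the a.s.\ limit zero.

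Part (3) is the main content. The algebraic identity $N^\top N = t^2 I_p - (t^2 I_p - N^\top N)$ yields
\begin{align*}
\nu^\top N^\top (t^2 I_n - NN^\top)^{-2} N\nu
&= \nu^\top N^\top N (t^2 I_p - N^\top N)^{-2}\nu \\
&= t^2 \, \nu^\top (t^2 I_p - N^\top N)^{-2}\nu - \nu^\top (t^2 I_p - N^\top N)^{-1}\nu.
\end{align*}
By Lemma \ref{qf_lem}(3), the second term tends a.s.\ to $m(t^2)$. For the first, the map $z \mapsto \nu^\top (zI_p - N^\top N)^{-1}\nu$ is holomorphic on $\mathbb{C}\setminus[0,\sigma_1(N)^2]$, and the deterministic-equivalents proof of Lemma \ref{qf_lem}(3) operates naturally on $z \in \mathbb{C}^+$, furnishing a.s.\ locally uniform convergence to $-m(z)$ on a neighborhood of $t^2$. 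Since $(zI_p - N^\top N)^{-2} = -\tfrac{d}{dz}(zI_p - N^\top N)^{-1}$, Weierstrass's theorem on termwise differentiation of locally uniformly convergent holomorphic functions gives $\nu^\top (t^2 I_p - N^\top N)^{-2}\nu \to m'(t^2)$ a.s., and the limit in (3) equals $t^2 m'(t^2) + m(t^2)$.

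The main technical obstacle is justifying the complex-analytic extension of Lemma \ref{qf_lem}(3) to an open neighborhood of $t^2$, which legitimizes the differentiation step. This extension is built into the Bai--Silverstein deterministic-equivalents framework already used; alternatively, one could redo that argument verbatim with the squared resolvent $(zI - N^\top N)^{-2}$ in place of $(zI - N^\top N)^{-1}$, which introduces no essentially new ideas but slightly longer bookkeeping.
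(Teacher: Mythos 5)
Your proposal is correct and takes essentially the same route as the paper: parts (1) and (2) by the same trace-lemma/Borel--Cantelli arguments used in Lemma~\ref{qf_lem}, and part (3) via the identity $\nu^\top N^\top (t^2 I_n - NN^\top)^{-2}N\nu = t^2\nu^\top(t^2 I_p - N^\top N)^{-2}\nu - \nu^\top(t^2 I_p - N^\top N)^{-1}\nu$ followed by differentiating the convergent holomorphic quadratic forms. The only cosmetic difference is that the paper invokes a Vitali-type statement (Lemma~\ref{holo_derivative_conv}, pointwise convergence plus a uniform bound implies convergence of derivatives), whereas you phrase it as locally uniform convergence plus Weierstrass, which is equivalent here.
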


The proof is in Sec.\ \ref{pf_qf_lem2}. Again, the key novelty is the proof of the third claim. The standard concentration bounds do not apply, because $u$ is non-random. Instead, we use an argument from complex analysis constructing a sequence of functions $f_n(t)$ such that their derivatives are $f_n'(t) = \nu^\top N^\top (t^2 I_n - N N^\top)^{-2} N \nu$, and deducing the convergence of $f_n'(t)$ from that of $f_n(t)$. 

Lemma \ref{qf_lem2} implies that $n^{-1}\tr (t^2 I_n - NN^\top)^{-2} \to \int (t^2-x)^{-2} d\underline F_{\gamma,H}(x) = \underline m'(t^2)$ for $t>b$.  Solving for $\eta_1$ in terms of $\eta_2$ from the first equation, plugging in to the second, and taking the limit as $n \to \infty$, we obtain that $\eta_2^2 \to c_2$, where 

$$c_2  \left(\frac{\underline m'(t^2)}{\tau\ell\underline m(t^2)^2} + m(t^2)+t^2 m'(t^2)\right) = \frac{1}{\tau\ell}.$$

Using $D(x) = x\cdot m(x)\underline m(x)$, we find $c_2 =  \underline m(t^2)/[D'(t^2)\tau \ell]$, where
$t$ solves \eqref{sv_eq}.
From the first equation, we then obtain  $\eta_1^2 \to c_1$, where $c_1 =  m(t^2)/[D'(t^2)\tau \ell]$, where
$t$ is as above.
This finishes the proof of Thm.\ \ref{spec} in the single-spiked case. The proof of the multispiked case is a relatively simple extension of the previous argument, so we present it in Sec.\ \ref{pf_multi}.

\subsection{Proof of Lemma \ref{op_norm_6}}
\label{pf_op_norm_6}

Since $X_i = \ell^{1/2} z_i u$, 
the $(i,j)$-th entry of $E^*$ is  $n^{-1/2} \ell^{1/2} z_i \cdot E_{ij} u(j)$. Now, denoting by $\odot$ elementwise products
$$\|E^{*}\| = \sup_{\|a\|=\|c\|=1} a^\top E^{*} c = n^{-1/2} \ell^{1/2} \sup_{\|a\|=\|c\|=1}  (a \odot z)^\top E (c \odot u). $$
We have $\|a \odot z\| \le \|a\|  \max |z_i| = \max |z_i|$ and $\|c \odot u\| \le \|c\|  \max |u_i|=\max |u_i|$, hence
$$\|E^{*}\| \le  \ell^{1/2}\cdot \|n^{-1/2}E\| \max |z_i| \max |u_i|.$$

Below we will argue that $\|n^{-1/2}E\| \le C$, a.s., so that the the operator norm is a.s. bounded. (The constant $C$ can change from line to line.) Once we have established that, we have that, almost surely, 

$$\|E^{*}\| \le  C\| z\| _\infty \| u\| _\infty.$$

Then, our main claim follows as long as 

\beq\label{z}
\| z\| _\infty \| u\| _\infty \to_{a.s.} 0.
\eeq

This holds under several possible sets of assumptions: 
\benum

\item {\bf Polynomial moment assumption}. Suppose $\E|z_i|^m \le C$ for some $m>0$ and  $C < \infty$. Then, since $z$ has iid standardized entries, we can derive that 
$$Pr(\max |z_i|\ge a) \le \E \max |z_i|^{m}/a^{m} \le nC/a^{m}.$$
To ensure that these probabilities are summable, we take $m$ such that $n/a^{m} = C/n^{1+\phi'}$ for some small $\phi'>0$. This is equivalent to $a = Cn^{(2+\phi')/m}$. It follows that $Pr(\max |z_i|\le C n^{(2+\phi')/m})$ a.s., for any $\phi'>0$.

Therefore, the required delocalization condition on $u$ is that 
$$\| u\|_{\infty} \cdot n^{(2+\phi')/m} \to_{a.s} 0. $$
Therefore, it is enough to assume that $$\| u\|_{\infty} \le C n^{-(2/m+c')},$$
for any small constant $c'>0$. Since $\|u\| = 1$, we have that $\| u\|_{\infty}  \ge n^{-1/2}$, so this is only feasible for $m>4$. Since $n$ is proportional to $p$, we can replace $n$ by $p$ in our assumption.

\item {\bf Sub-gaussian assumption}. Suppose the $z_i$ are sub-gaussian in the sense that $\E \exp(t |z_i|^2) \le C$ for some $t>0$ and  $C < \infty$ \citep[e.g.,][]{vershynin2010introduction}. Then, since $z$ has iid standardized entries, we can derive that 
$$Pr(\max |z_i|\ge a) \le \E \max\exp( t|z_i|^{2})/\exp(t a^{2}) \le nC/\exp(t a^{2}) .$$
To ensure that these probabilities are summable, we take $m$ such that $n/\exp(t a^{2}) = C/n^{1+\phi'}$ for some small $\phi'>0$. This is equivalent to 

$$a = \sqrt{\frac{(2+\phi') \log n}{t}}.$$

Ignoring constants that do not depend on $n$, we can say that $a = C \sqrt{\log n}.$ Since $n$ is proportional to $p$, we can replace $n$ by $p$ in our assumption.

It follows that $Pr(\max |z_i|\le C  \sqrt{\log n})$ a.s., for some $C>0$.

Therefore, the required delocalization condition on $u$ is that 
$$\| u\|_{\infty} \cdot \sqrt{\log n} \to_{a.s} 0. $$
\eenum

It remains to show that $\|n^{-1/2}E\|$ is a.s. bounded. We will see below that for this it is enough that the $6+\ep$-th moment of all $E_{ij}$ is uniformly bounded for some $\ep>0$.  We will only give a proof sketch, as this consists of a small modification of Thm.\ 6.3 of \cite{bai2009spectral}. Their proof essentially goes through, except that one needs a slightly different, simpler, argument for the initial truncation step.  

For this, we follow the same steps as the original proof of Cor. 6.6 from \cite{bai2009spectral}. The first step (on p. 128 of \cite{bai2009spectral}) is a truncation, which relies on Thm.\ 5.8, a result controlling the operator norm of matrices with iid entries. The proof of this result is not provided in the book, since it is almost the same as Thm.\ 5.1. Therefore, we will show how to adapt the argument for Thm.\ 5.1 to our assumptions. 

The first step of that proof (on p. 94) is a truncation of the entries of $E$ at a threshold $cn^{1/2}$. Let $\tilde E_{ij} = E_{ij} I(|E_{ij}|\le cn^{1/2})$, and let $\tilde E_{ij}$ the corresponding matrix. We need to show that $P(E\neq \tilde E,\, i.o.)=0$. Let $A_n = \bigcup_{(i, j)\le (n,p)} I\left(|E_{ij}| \ge cn^{1/2}\right)$. Then $\{E\neq \tilde E\}=A_n$. By the Borel-Cantelli lemma, it is enough to show:

$$\sum_n P\left[A_n\right]
= \sum_n P\left[\bigcup_{(i, j)\le (n,p)} I\left(|E_{ij}| \ge cn^{1/2}\right)\right]<\infty.$$

We can bound 
$$ P[A_n]\le
 np \cdot \max_{i,j} P\left(|E_{ij}| \ge cn^{1/2}\right) 
 \le np \cdot \max_{i,j} \E |E_{ij}|^k/[c^kn^{k/2}].$$ 
 
By taking the exponent $k = 6+\ep$, we see from the assumption $\E E_{ij}^{6+\ep}<C$ that this bound is summable. Thus the first step of the proof of Thm.\ 5.1 adapts to our setting. Then, similarly to remark 5.7 on p.104, we obtain Thm.\ 5.8 under the present conditions. This shows that the first step of the proof of Cor. 6.6 goes through.
 
Continuing with the proof of Cor. 6.6 under the present conditions, we need on p. 129 that the conditions of Thm 5.9 be met. It is immediate to see that they are met. In the remaining part of the argument, as stated on p. 129, ``no assumptions [beyond independence and moment assumptions] need to be made on the relationship between the $E$-s for different $n$.''  Since our assumptions ensure that after truncation we are in the same setting as that stated in Thm.\ 6.3, including the independence and moment assumptions, the rest of the argument applies verbatim. This finishes the proof of the sufficiency of finite $6+\ep$ moment for $E$. This also finishes the argument that $\|n^{-1/2}E\|$ is a.s. bounded, and thus that of the main lemma.

\subsection{Proof of Lemma \ref{noise_lem}}
\label{pf_noise_lem}
 Recall that $N = n^{-1/2}\mathcal{E}^*$, where $\mathcal{E}^*$ has rows $\ep_i^* = A_i^\top  \ep_i$. According to our assumptions, the rows are iid, with independent entries having a distribution of variances $H_p$. Recall that we assumed that $H_p \Rightarrow H$. Hence the eigenvalue distribution of $N^\top N$ converges to the Marchenko-Pastur distribution $F_{\gamma,H}$. This follows essentially by Thm.\ 4.3 of \cite{bai2009spectral}. While that result is stated for identically distributed random variables, it is easy to see that under our higher moment assumptions this is not needed, as the truncation steps on p. 70 of \cite{bai2009spectral} go through; the situation is analogous to the modification of \citep[][Cor.\ 6.6]{bai2009spectral} from Lemma \ref{op_norm_6}. 

Moreover, since $\smash{\E{\ep_{ij}^{*(6+\ep)}}<\infty}$ and $\smash{\sup \supp(H_p) \to \sup \,\text{supp}(H)}$, the largest eigenvalue of $N^\top N$ converges a.s. to the upper edge $b^2$ of the support of $F_{\gamma,H}$, by the modification of \citep[][Cor.\ 6.6]{bai2009spectral} presented in the proof of Lemma \ref{op_norm_6}.

\subsection{Proof of Lemma \ref{qf_lem}}
\label{pf_qf_lem}

\textbf{Part 1}: 
For $Z^\top (t^2 I_n - NN^\top)^{-1} Z$, note that $Z$ has iid entries ---with mean 0 and variance $1/n$---that are independent of $N$. We will use the following result: 

\begin{lem}[Concentration of quadratic forms, consequence of Lemma B.26 in \citet{bai2009spectral}] Let $x \in \RR^k$ be a random vector with i.i.d. entries and $\EE{x} = 0$, for which $\EE{(\sqrt{k}x_i)^2} = 1$ and $\smash{\sup_i \EE{(\sqrt{k}x_i)^{4+\phi}}}$ $ < C$ for some $\phi>0$ and $C <\infty$. Moreover, let $A_k$ be a sequence of random $k \times k$ symmetric matrices independent of $x$, with a.s. uniformly bounded eigenvalues. Then the quadratic forms $x^\top A_k x $ concentrate around their means:  $\smash{x^\top A_k x - k^{-1} \tr A_k \rightarrow_{a.s.} 0}$.
\label{quad_form}
\end{lem}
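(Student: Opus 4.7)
The plan is to reduce to the Bai--Silverstein moment inequality (Lemma B.26 of \cite{bai2009spectral}) and then pass from an $L^p$ bound to almost sure convergence via Markov's inequality and the Borel--Cantelli lemma. The $\phi$ extra moments beyond the fourth are precisely what is needed to make the resulting tail bounds summable in $k$, upgrading convergence in probability to a.s.\ convergence.

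First I would condition on $A_k$, which is legitimate by the assumed independence of $x$ and $A_k$. Writing $\tilde x = \sqrt{k}\,x$, the entries $\tilde x_i$ are i.i.d.\ with mean zero, unit variance, and uniformly bounded $(4+\phi)$-th moment, and
\[
    x^\top A_k x - k^{-1}\tr A_k \;=\; k^{-1}\bigl(\tilde x^\top A_k \tilde x - \tr A_k\bigr).
\]
For each realization of $A_k$, Lemma B.26 of \cite{bai2009spectral}, applied with exponent $p = 2+\phi/2 > 2$ (chosen so that $2p = 4+\phi$ matches the available moment), gives
\[
    \E\!\left[|\tilde x^\top A_k \tilde x - \tr A_k|^p \,\big|\, A_k\right]
    \le C_p\Bigl[\bigl(\E|\tilde x_1|^4 \cdot \tr A_k^2\bigr)^{p/2} + \E|\tilde x_1|^{2p}\cdot \tr(A_k^2)^{p/2}\Bigr].
\]

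Next I invoke the a.s.\ uniform spectral bound on $A_k$: on a probability-one event we have $|\lambda_i(A_k)| \le B$ for all $i,k$, hence $\tr A_k^2 \le B^2 k$ and $\tr (A_k^2)^{p/2} = \sum_i |\lambda_i(A_k)|^p \le B^p k$. Substituting, including the $k^{-p}$ factor from the rescaling, and taking the outer expectation yields
\[
    \E\,|x^\top A_k x - k^{-1}\tr A_k|^p \;\le\; C\,k^{-p/2}.
\]
Since $p/2 = 1+\phi/4 > 1$, these bounds are summable in $k$, so Markov's inequality combined with Borel--Cantelli delivers $x^\top A_k x - k^{-1}\tr A_k \to 0$ almost surely.

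The only subtlety is the randomness of $A_k$, since Lemma B.26 is stated for deterministic matrices. Conditioning handles this cleanly, and the a.s.\ uniform eigenvalue bound converts the conditional estimate into a deterministic, $k$-independent constant; were the spectral bound only in probability, one would need a preliminary truncation, but with the stated a.s.\ bound this step is routine.
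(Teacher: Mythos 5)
Your proposal is correct and follows exactly the route the paper intends: the paper states this lemma as a consequence of Lemma B.26 in \cite{bai2009spectral} without writing out the reduction, and your argument — condition on $A_k$, apply the Bai–Silverstein moment inequality with exponent $p = 2+\phi/2$ so that $2p = 4+\phi$ matches the assumed moment, use the a.s.\ uniform spectral bound to get $\tr A_k^2 = O(k)$ and $\tr (A_k^2)^{p/2} = O(k)$, and then Markov plus Borel–Cantelli with the summable rate $k^{-(1+\phi/4)}$ — is precisely the standard way to fill in that citation. Your parenthetical remark about the a.s.\ spectral bound is also the right observation; formally one replaces $A_k$ by $A_k\,\mathbb{1}(\|A_k\|\le B)$, which changes nothing on a probability-one event and makes the conditional bound deterministic, but as you say this is routine.
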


 We apply this lemma with $x = Z$, $k=p$ and $A_p =  (t^2 I_n - NN^\top)^{-1}$. To get almost sure convergence, here it is required that $z_i$ have finite $4+\phi$-th moment. This shows the concentration of $Z^\top (t^2 I_n - NN^\top)^{-1} Z$. 

\textbf{Part 2}: 
To show $Z^\top (t^2 I_n - NN^\top)^{-1} N\nu$ concentrates around  0, we note that $w=(t^2 I_n - NN^\top)^{-1} N\nu$ is a random vector independent of $Z$, with a.s. bounded norm. Hence, conditional on $w$: 
\begin{align*}
Pr(|Z^\top w|\ge a|w) &\le  a^{-4} \E|Z^\top w|^4 = a^{-4} [\sum_{i} \E Z_{ni}^4w_i^4 +\sum_{i\neq j} \E Z_{ni}^2  \E Z_{nj}^2w_i^2w_j^2] \\
&\le a^{-4}  \E Z_{n1}^4 (\sum_i w_i^2)^2 = a^{-4} n^{-2} \E Z_{1}^4 \cdot \|w\|_2^4
\end{align*}

For any $C$ we can write
$$Pr(|Z^\top w|\ge a) \le Pr(|Z^\top w|\ge a|\|w\|\le C) +Pr(\|w\|> C).$$
For sufficiently large $C$, the second term, $Pr(\|w\|>C)$ is summable in $n$. By the above bound, the first term is summable for any $C$. Hence, by the Borel-Cantelli lemma, we obtain $|Z^\top w|\to 0$ a.s. This shows the required concentration.

\textbf{Part 2}: 
Finally we need to show that $\nu^\top (t^2 I_p - N^\top N)^{-1} \nu$ concentrates around  a definite value. This is probably the most interesting part, because the vector $\nu$ is not random. Most results for controlling expressions of the above type are designed for random $\nu$; however here the matrix $N$ is random instead. For this reason we will adopt a different approach.
 
 Under our assumption we have $\nu^\top(P -zI_p)^{-1} \nu \to m_H(z)$, for $z = t^2 + i v$ with $v>0$ fixed, where $P$ is the diagonal matrix with $(j,j)$-th entry $\Var{ \ep_{ij}^*}$. Therefore, Thm 1 of \cite{bai2007asymptotics} shows that $\nu^\top (z I_p - N^\top N)^{-1} \nu \to - m(z)$
a.s., where $m(z)$ is the Stieltjes transform of the Marchenko-Pastur distribution $F_{\gamma,H}$. 

A close examination of their proofs reveals that their result holds when $v \to 0$ sufficiently slowly, for instance $v = n^{-\alpha}$ for $\alpha = 1/10$.  The reason is that all bounds in the proof have the rate $n^{-k}v^{-l}$ for some small $k,l>0$, and hence they converge to 0 for $v$ of the above form. 

 For instance, the very first bounds in the proof of Thm 1 of \cite{bai2007asymptotics} are in Eq. (2.2) on page 1543. The first one states a bound of order $O(1/n^r)$. The inequalities leading up to it show that the bound is in fact  $O(1/(n^r v^{2r}))$. Similarly, the second inequality, stated with a bound of order $O(1/n^{r/2})$ is in fact  $O(1/(n^{r/2} v^{r}))$. These bounds go to zero when $v = n^{-\alpha}$ with small $\alpha>0$. In a similar way, the remaining bounds in the theorem have the same property.

To get the convergence for real $t^2$ from the convergence for complex $z = t^2 + iv$, we note that 
\begin{align*}
|\nu^\top (z I_p - N^\top N)^{-1} \nu - \nu^\top (t^2 I_p - N^\top N)^{-1} \nu| & 
= v |\nu^\top (z I_p - N^\top N)^{-1} (t^2 I_p - N^\top N)^{-1} \nu| \le \\
&\le v \|(t^2 I_p - N^\top N)^{-1}\|^2 \cdot u^\top u.
\end{align*}
As discussed above, when $t>b$, the matrices  $(t^2 I_p - N^\top N)^{-1}$ have a.s. bounded operator norm. Hence, we conclude that if $v\to0$, then a.s.
$$\nu^\top (z I_p - N^\top N)^{-1} \nu - \nu^\top (t^2 I_p - N^\top N)^{-1} \nu \to0.$$

 Finally, $m(z) \to m(t^2)$ by the continuity of the Stieltjes transform for all $t^2>0$ \citep{bai2009spectral}. We conclude that $\nu^\top (t^2 I_p - N^\top N)^{-1} \nu\to -m(t^2)$ a.s. This finishes the analysis of the last quadratic form. 

\subsection{Proof of Lemma \ref{qf_lem2}}
\label{pf_qf_lem2}
\textbf{Parts 1 and 2}: 
The proofs of Part 1 and 2 are exactly analogous to those in Lemma \ref{qf_lem}. Indeed, the same arguments work despite the change from $(t^2 I_p - N^\top N)^{-1}$ to $(t^2 I_p - N^\top N)^{-2}$, because the only properties we used are its independence from $Z$, and its a.s. bounded operator norm. These also hold for $(t^2 I_p - N^\top N)^{-2}$, so the same proof works. 

\textbf{Part 3}: 
We start with the identity $\nu^\top N^\top (t^2 I_n - N N^\top)^{-2} N \nu  = - \nu^\top (t^2 I_p - N^\top N)^{-1} \nu + t^2 \nu^\top (t^2 I_p - N^\top N)^{-2} \nu$. Since in Lemma \ref{qf_lem} we have already established $\nu^\top (t^2 I_p - N^\top N)^{-1} \nu \to -m(t^2)$, we only need to show the convergence of  $\nu^\top (t^2 I_p - N^\top N)^{-2} \nu$. 

For this we will employ the following \emph{derivative trick} \citep[see e.g.,][]{dobriban2015high}. We will construct a function with two properties: (1) its derivative is the quantity $\nu^\top (t^2 I_p - N^\top N)^{-2} \nu$ that we want, and (2) its limit is convenient to obtain. The following lemma will allow us to get our answer by interchanging the order of limits: 
\begin{lem}[see Lemma 2.14 in \cite{bai2009spectral}] Let $f_1, f_2,\ldots $ be analytic on a domain $D$ in the complex plane, satisfying $|f_n(z)| \le M$ for every $n$ and $z$ in $D$. Suppose that there is an analytic function $f$ on $D$ such that $f_n(z) \to f(z)$ for all $z \in D$. Then it also holds that $f_n'(z) \to f'(z)$ for all $z \in D$.
\label{holo_derivative_conv}
\end{lem}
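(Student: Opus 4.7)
The plan is to deduce this from two classical ingredients of complex analysis: Montel's theorem (to upgrade pointwise convergence to locally uniform convergence) and Cauchy's integral formula (to transfer locally uniform convergence from $f_n$ to $f_n'$). No probabilistic input is needed.

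First, I would use the uniform bound $|f_n(z)| \le M$ on $D$ to conclude that the family $\{f_n\}$ is normal on $D$ by Montel's theorem. Thus every subsequence of $\{f_n\}$ admits a further subsequence converging locally uniformly on compact subsets of $D$ to some holomorphic function $g$ on $D$. By the pointwise hypothesis $f_n(z) \to f(z)$ for every $z \in D$, any such subsequential limit $g$ must satisfy $g \equiv f$ on $D$. Since every subsequence has a further subsequence converging locally uniformly to the same limit $f$, a standard compactness argument in the compact-open topology shows that the full sequence $f_n$ itself converges to $f$ locally uniformly on $D$. In particular, $f$ is holomorphic on $D$ and the convergence is uniform on every compact $K \subset D$.

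Second, fix $z_0 \in D$ and choose $r > 0$ small enough that the closed disk $\overline{B(z_0,r)} \subset D$. Let $\gamma$ denote the positively oriented boundary circle. By Cauchy's integral formula for derivatives,
\[
f_n'(z_0) = \frac{1}{2\pi i}\oint_\gamma \frac{f_n(w)}{(w-z_0)^2}\,dw, \qquad f'(z_0) = \frac{1}{2\pi i}\oint_\gamma \frac{f(w)}{(w-z_0)^2}\,dw.
\]
Since $\gamma$ is compact and $f_n \to f$ uniformly on $\gamma$, and since the kernel $(w-z_0)^{-2}$ is bounded on $\gamma$, we may pass to the limit under the contour integral to obtain $f_n'(z_0) \to f'(z_0)$, as required.

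The only mildly subtle step is the Montel-to-full-sequence upgrade, and even that is entirely routine once uniqueness of the subsequential limit is noted. Everything else is an immediate application of Cauchy's formula, so there is no substantial obstacle. I would remark in passing that the same argument in fact gives locally uniform convergence of $f_n'$ to $f'$, and inductively of all higher derivatives, but only the pointwise statement is needed for the application to $\nu^\top (t^2 I_p - N^\top N)^{-2} \nu$ in the proof of Lemma~\ref{qf_lem2}.
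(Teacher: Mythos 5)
Your proof is correct. The paper does not actually prove this lemma --- it is cited directly as Lemma 2.14 of \cite{bai2009spectral} and used as a black box --- so there is no internal argument to compare against. Your Montel-plus-Cauchy route is the standard proof (essentially Vitali's theorem): uniform boundedness gives normality; the pointwise limit $f$ forces every locally uniform subsequential limit to coincide with $f$; the subsequence principle in the metrizable compact-open topology upgrades this to locally uniform convergence of the full sequence; and Cauchy's integral formula for derivatives transfers locally uniform convergence from $f_n$ to $f_n'$. Your closing remark that the convergence of $f_n'$ is in fact locally uniform (and inductively for all higher derivatives) is also correct, though only the pointwise statement is used in the application to $\nu^\top (t^2 I_p - N^\top N)^{-2}\nu$.
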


Accordingly, consider the function $f_p(r) = - \nu^\top (r I_p - N^\top N)^{-1} \nu$. Its derivative is $f_p'(r) = \nu^\top (r I_p - N^\top N)^{-2} \nu$. Let $\mathcal {S} : = \{x+ iv: x>b+\ep\}$ for a sufficiently small $\ep>0$, and let us work on the set of full measure where $\|N^\top N\|<b+\ep/2$ eventually, and where $f_p(r)\to m(r)$. By inspection, $f_p$ are analytic functions on $\mathcal {S}$ bounded as $|f_p|\le 2/\ep$. Hence, by Lemma \ref{holo_derivative_conv}, $f_p'(r)\to m'(r)$.

In conclusion, $\nu^\top N^\top (t^2 I_p - N^\top N)^{-2} N \nu \to m(t^2) + t^2 m'(t^2)$, finishing the proof.

\subsection{Proof of Thm.\ \ref{spec} - Multispiked extension}
\label{pf_multi}


Let us denote by $\nu_i=M u_i/\xi_i^{1/2}$ the normalized backprojected signals, where $\xi_i = \|M  u_i\|^2 \to \tau_i$. For the proof we start as in Sec.\ \ref{spec_pf}, showing that we can equivalently work with $B_i^* =  \sum_{k=1}^r (\xi_k \ell_k)^{1/2} z_{ik} \nu_k +\ep^*_i.$  Defining the $r\times r$ diagonal matrices $L$, $\Delta$ with diagonal entries $\ell_k$, $\xi_k$ (respectively), and the $n \times r$, $p \times r$ matrices $Z ,\mathcal{V}$, with columns $Z_k = n^{-1/2}(z_{1k}, \ldots, z_{nk})^\top$ and $\nu_k$ respectively, we can thus work with 
$$\tilde B^* =  Z (\Delta L)^{1/2}\mathcal{V}^\top + N.$$
The matrix $M_n(t)$ is now $2r\times 2r$, and has the form
\begin{align*}M_n(t) = 
\begin{bmatrix} 
t \cdot Z^\top (t^2 I_n - NN^\top)^{-1} Z & 
Z^\top (t^2 I_n - NN^\top)^{-1} N\mathcal{V} \\ 
\mathcal{V}^\top N^\top (t^2 I_n - NN^\top)^{-1} Z & 
t \cdot \mathcal{V}^\top (t^2 I_p - N^\top N)^{-1} \mathcal{V}
\end{bmatrix} 
-  
\begin{bmatrix} 0_r & (\Delta L)^{-1/2} \\  (\Delta L)^{-1/2} & 0_r \end{bmatrix}.
\end{align*}
It is easy to see that Lemma \ref{noise_lem} still holds in this case. To find the limits of the entries of $M_n$, we need the following additional statement. 

\begin{lem}[Multispiked quadratic forms] 
\label{qf_lem3}
The quadratic forms in the multispiked case have the following properties for $t>b$:
\benum
\item $Z_k^\top R^\alpha Z_j \to 0$ a.s. for $\alpha=1,2$, if $k\neq j$.
\item $\nu_k^\top (t^2 I_p - N^\top N)^{-\alpha} \nu_j \to 0$ a.s. for $\alpha=1,2$, if $k\neq j$.
\eenum
\end{lem}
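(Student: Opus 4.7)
The plan is to handle the two parts separately, reusing almost all the machinery developed for the single-spiked case in Lemmas \ref{qf_lem} and \ref{qf_lem2}; the only genuinely new ingredient is the bilinear (off-diagonal) extension of the deterministic-equivalent argument from \cite{bai2007asymptotics}.

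For Part 1, I would treat $Z_j$ and $Z_k$ as independent random vectors with iid zero-mean entries of variance $1/n$, both independent of $N$ (and hence of $R^\alpha = (t^2 I_n - NN^\top)^{-\alpha}$). Conditioning on $R^\alpha$, the bilinear form $Z_k^\top R^\alpha Z_j$ has conditional mean zero and conditional fourth moment that can be bounded by $C \|R^\alpha\|_F^4 / n^4 \le C \|R^\alpha\|^{4}/n^2$. Since $\|R\|$ is a.s.\ uniformly bounded for $t > b$ (by Lemma \ref{noise_lem}), conditional Markov gives $\Pr(|Z_k^\top R^\alpha Z_j| \ge a \mid R^\alpha) \le C/(a^4 n^2)$, which is summable in $n$. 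Borel--Cantelli, combined with a standard truncation that removes the event $\{\|R\| > C\}$, yields the a.s.\ convergence. This mirrors the argument used in Part 2 of Lemma \ref{qf_lem} almost verbatim.

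For Part 2 with $\alpha = 1$, the diagonal case was shown to follow from Theorem 1 of \cite{bai2007asymptotics} under the generic signal assumption \ref{aaa}. I would extend that result to bilinear forms $\nu_j^\top (zI_p - N^\top N)^{-1} \nu_k$ with $j \ne k$. The argument of \cite{bai2007asymptotics} controls $x^\top (zI_p - N^\top N)^{-1} y - x^\top (-z[I_p + \underline{m}(z) P]^{-1}) y$; after taking limits, the limiting bilinear form is proportional to $\nu_j^\top (P - z'I_p)^{-1} \nu_k$ for a suitable $z'$. Assumption \ref{aaa} (in its form appropriate for the unnormalized back-projected matrix, where $u_k$ is replaced by $\nu_k$ and $I_p$ by $M$) yields $\nu_j^\top (P - z'I_p)^{-1} \nu_k \to 0$ for $j \ne k$, so the same limit transfer used in Lemma \ref{qf_lem} Part 3 (first for $z = t^2 + iv$ with $v \downarrow 0$ at rate $n^{-\alpha}$, then passing to real $t^2$ using the a.s.\ bounded operator norm of the resolvent for $t > b$) gives $\nu_j^\top (t^2 I_p - N^\top N)^{-1} \nu_k \to 0$ a.s.

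For Part 2 with $\alpha = 2$, I would apply exactly the derivative trick of Lemma \ref{qf_lem2} Part 3. Define $f_{p,jk}(r) = -\nu_j^\top (rI_p - N^\top N)^{-1} \nu_k$, which is analytic on $\mathcal{S} = \{x + iv : x > b + \epsilon\}$ and uniformly bounded there (on the a.s.\ event that $\|N^\top N\| < b + \epsilon/2$ eventually). By the preceding paragraph, $f_{p,jk} \to 0$ pointwise on $\mathcal{S}$. Lemma \ref{holo_derivative_conv} then yields $f_{p,jk}'(r) = \nu_j^\top (rI_p - N^\top N)^{-2} \nu_k \to 0$ a.s.\ on $\mathcal{S}$, giving the claim at $r = t^2$.

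The main obstacle will be the bilinear off-diagonal extension of the deterministic-equivalent result of \cite{bai2007asymptotics}, which is stated there for quadratic forms $x^\top G x$. The bounds in the original proof, however, apply termwise to $x^\top G y$ with only cosmetic notational changes (e.g.\ replacing $x^\top A x$ by the polarization identity $\tfrac{1}{4}[(x+y)^\top A (x+y) - (x-y)^\top A(x-y)]$), so the same rates $O(n^{-k} v^{-l})$ are retained and the passage $v \downarrow 0$ along $v = n^{-\alpha}$ still goes through. Once this extension is in hand, both parts follow from the single-spike arguments with no essential modifications.
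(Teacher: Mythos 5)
Your proposal is correct and tracks the paper's proof closely. The only difference is in Part 1, where you re-derive the convergence $Z_k^\top R^\alpha Z_j \to 0$ from scratch via a conditional fourth-moment bound and Borel--Cantelli, whereas the paper simply invokes Proposition 4.1 of \cite{couillet2011random} (bilinear forms of independent random vectors against uniformly bounded random matrices vanish a.s.); your moment calculation is in fact the standard proof of that cited result, so there is no substantive divergence. For Part 2 you and the paper do the same thing: extend the deterministic-equivalent bounds of \cite{bai2007asymptotics} to the off-diagonal bilinear form (the paper asserts the bounds "hold unchanged"; your polarization remark is one clean way to see this), identify the limit as $\nu_j^\top(P - z I_p)^{-1}\nu_k$, which vanishes for $j \ne k$ by the generic-signal assumption, pass from $z = t^2 + iv$ to real $t^2$ using the a.s.\ bounded resolvent, and then obtain the $\alpha=2$ case from the $\alpha=1$ case by the holomorphic-derivative trick.
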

This lemma is proved in Sec.\ \ref{pf_qf_lem3}, using similar techniques as those in Lemma \ref{noise_lem}.  Defining the $r\times r$ diagonal matrices $T$ with diagonal entries $\tau_k$, we conclude that for $t>b$, $M_n(t)\to M(t)$ a.s., where now
$$M(t) = 
\begin{bmatrix} 
-t \cdot \underline m(t^2) I_r & 
-(T L)^{-1/2} \\ 
-(T L)^{-1/2} &
-t \cdot m(t^2) I_r
\end{bmatrix}.
$$
As before,  by Lemma A.1 of \cite{benaych2012singular}, we get that for $\tau_k\ell_k>1/D(b^2)$, $\sigma_k(\tilde B^*) \to t_k$ a.s., where $ t_k^2 \cdot \underline m(t_k^2) m(t_k^2)  = 1/(\tau_k\ell_k)$. This finishes the spike convergence proof.
 
To obtain the limit of the angles for $\hat u_k$ for a $k$ such that $\ell_k>\tau_k D(b^2)$, consider the left singular vectors $\hat Z_k$ associated to $\sigma_k(\tilde B^*)$. Define the $2r$-vector 
$$\alpha= 
\begin{bmatrix} 
 \beta_1\\
 \beta_2
\end{bmatrix}
= 
\begin{bmatrix} 
 (\Delta L)^{1/2}\mathcal{V}^\top \hat u_k \\ 
 (\Delta L)^{1/2}Z^\top \hat Z_k 
\end{bmatrix}.
$$
The vector $\alpha$ belongs to the kernel of $M_n(\sigma_k(\tilde B^*))$. As argued by \cite{benaych2012singular}, the fact that the projection of $\alpha$ into the orthogonal complement of $M(t_k)$ tends to zero, implies that $\alpha_j\to 0$ for all $j\notin \{k,k+r\}$. This proves that $\nu_j^\top \hat u_k \to 0$ for $j\neq k$, and the analogous claim for the left singular vectors. 

The linear equation $M_n(\sigma_k(\tilde B^*))\alpha=0$ in the $k$-th coordinate, where $k \le r$, reads (with $t=\sigma_k(\tilde B^*)$):
$$ t \alpha_k Z_k^\top R Z_k  - \alpha_{r+k} (\xi_k \ell_k)^{-1/2} 
+ \sum_{i\neq k} M_n(\sigma_k(\tilde B^*))_{ik} \alpha_k=0.$$
Only the first two terms are non-negligible due to the behavior of $M_n$, so we obtain $ t \alpha_k Z_k^\top R Z_k  = \alpha_{r+k} (\xi_k \ell_k)^{-1/2} +o_p(1)$. 
Moreover taking the norm of the equation $\hat Z_k = R( t Z \beta_1 + N\mathcal{V} \beta_2)$ (see Lemma 5.1 in \cite{benaych2012singular}), we get
$$t^2 \sum_{i,j \le r} \alpha_i \alpha_j  Z_i^\top R^2 Z_j 
+ \sum_{i,j \le r}  \alpha_{k+i} \alpha_{k+j}  \nu_i^\top N^\top R^2 N \nu_j 
+ \sum_{i,j \le r}  \alpha_i \alpha_{k+j}  Z_i R^2 N \nu_j =1.$$
From Lemma \ref{qf_lem3} and the discussion above, only the terms $\alpha_k^2 Z_k^\top R^2 Z_k$ and $\alpha_{r+k}^2 \nu_k^\top N^\top R^2 N \nu_k$ are non-negligible, so we obtain 
$$t^2\alpha_k^2 Z_k^\top R^2 Z_k + 
\alpha_{r+k}^2 \nu_k^\top N^\top R^2 N \nu_k =1 + o_p(1).$$
Combining the two equations above, 
$$\alpha_{r+k}^2\left[ \frac{Z_k^\top R^2 Z_k}{\xi_k \ell_k (Z_k^\top R Z_k)^2} + \nu_k^\top N^\top R^2 N \nu_k \right]=1 + o_p(1).$$
Since this is the same equation as in the single-spiked case, we can take the limit in a completely analogous way. This  finishes the proof.

\subsection{Proof of Lemma \ref{qf_lem3}}
\label{pf_qf_lem3}

\textbf{Part 1}: The convergence $Z_k^\top R^\alpha Z_j \to 0$ a.s. for $\alpha=1,2$, if $k\neq j$, follows directly from the following well-known lemma, cited from \cite{couillet2011random}:

\begin{lem}[Proposition 4.1 in \cite{couillet2011random}]
Let $x_n \in \mathbb{R}^n $ and $y_n \in \mathbb{R}^n $ be independent sequences of random vectors, such that for each $n$ the coordinates of $x_n$ and  $y_n$ are independent random variables. Moreover, suppose that the coordinates of $x_n$ are identically distributed with mean 0, variance $C/n$ for some $C>0$ and fourth moment of order $1/n^2$. Suppose the same conditions hold for $y_n$, where the distribution of the coordinates of $y_n$ can be different from those of $x_n$. Let $A_n$ be a sequence of $n \times n$ random matrices such that $\|A_n\|$ is uniformly bounded. Then $x_n^\top A_n y_n \to_{a.s.} 0$.
\end{lem}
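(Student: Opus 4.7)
The plan is to prove this by a conditioning argument followed by a fourth-moment bound and Borel--Cantelli. Since the lemma does not assume $A_n$ is independent of $(x_n, y_n)$ but only that $\|A_n\|$ is uniformly bounded (almost surely), the first step is to reduce to a deterministic-$A_n$ setting. On the almost-sure event where $\|A_n\| \le K$ for some finite $K$ and all large $n$, we may condition on the sigma-field generated by $(A_n)$, so that conditionally $A_n$ is deterministic with operator norm bounded by $K$, and $(x_n, y_n)$ retain their independence structure. (If one prefers, one can alternatively define $\tilde A_n = A_n \mathbf{1}\{\|A_n\| \le K\}$ and work with $\tilde A_n$, showing the truncated version converges and that the difference is negligible with high probability.)

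Next I would establish that $Q_n := x_n^\top A_n y_n$ has conditional mean zero and small conditional variance. Writing $Q_n = \sum_{i,j} (A_n)_{ij} x_{n,i} y_{n,j}$ and using independence together with mean-zero entries,
\[
\E[Q_n \mid A_n] = 0, \qquad \E[Q_n^2 \mid A_n] = \frac{C_x C_y}{n^2} \sum_{i,j} (A_n)_{ij}^2 = \frac{C_x C_y}{n^2} \|A_n\|_F^2 \le \frac{C_x C_y}{n} \|A_n\|^2,
\]
which is $O(1/n)$ on the bounded-norm event. This alone gives convergence in probability but is not summable enough for almost sure convergence, so the main step is to push to the fourth moment.

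The main work is the fourth-moment bound. Expanding
\[
\E[Q_n^4 \mid A_n] = \sum (A_n)_{i_1 j_1}(A_n)_{i_2 j_2}(A_n)_{i_3 j_3}(A_n)_{i_4 j_4}\, \E[x_{n,i_1}x_{n,i_2}x_{n,i_3}x_{n,i_4}]\,\E[y_{n,j_1}y_{n,j_2}y_{n,j_3}y_{n,j_4}],
\]
the only nonzero contributions arise when the $i$-indices and $j$-indices each pair up or coincide. Using the hypothesis that second moments are $C/n$ and fourth moments are $O(1/n^2)$, one checks that the dominant contribution comes from the pair-pair combinatorial patterns and yields a bound of the form
\[
\E[Q_n^4 \mid A_n] \le \frac{C'}{n^2}\|A_n\|_F^4 + \frac{C''}{n^2}\bigl(\|A_n\|_F^2 \|A_n\|^2 + \text{lower order}\bigr) \le \frac{\tilde C}{n^2} \|A_n\|^4.
\]
The only slightly delicate point is bookkeeping the combinatorics of the eight-index sum so that every $(A_n)_{ij}^2$ factor is absorbed into an operator-norm bound; each resulting sum is at most $n^2 \|A_n\|^4$, killed by the $1/n^4$ from the four second-moment factors.

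Finally, Markov's inequality gives $P(|Q_n| > \ep \mid A_n) \le \tilde C K^4 / (\ep^4 n^2)$ on the event $\{\|A_n\| \le K\}$, which is summable in $n$. Borel--Cantelli then yields $Q_n \to 0$ a.s.\ on that event; letting $K \to \infty$ along the a.s.\ event where $\|A_n\|$ is uniformly bounded completes the proof. The conceptual obstacle is entirely in the combinatorial accounting of the fourth moment; everything else is standard.
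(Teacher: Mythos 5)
The paper never proves this statement: it is quoted as Proposition 4.1 of \cite{couillet2011random} and used as a black box in the multispiked quadratic-form lemma, so your blind proof cannot be compared to an internal argument. What you propose is the standard self-contained route for such ``trace lemmas'' (condition on $A_n$, compute a fourth moment, Markov plus Borel--Cantelli), in the same spirit as the Bai--Silverstein quadratic-form lemma the paper cites elsewhere, and the skeleton is sound: the nonzero contributions to $\E[Q_n^4\mid A_n]$ do come only from paired or coincident indices, each moment factor is $O(1/n^2)$ per vector, and every pattern sum of the four $A_n$-entries is bounded by $n^2\|A_n\|^4$ (aligned pairs give $\|A_n\|_F^4\le n^2\|A_n\|^4$; crossed pairs give $\|A_n^\top A_n\|_F^2\le n\|A_n\|^4$, etc.), yielding $\E[Q_n^4\mid A_n]=O(\|A_n\|^4/n^2)$, which is summable.

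Two points need attention. First, independence of $A_n$ from $(x_n,y_n)$ is not cosmetic: without it the lemma is false (take $A_n=x_ny_n^\top/(\|x_n\|\,\|y_n\|)$, which has operator norm $1$ yet $x_n^\top A_ny_n=\|x_n\|\,\|y_n\|\to\sqrt{C_xC_y}\neq 0$). Your opening step --- conditioning on $\sigma(A_n)$ and asserting that $(x_n,y_n)$ ``retain their independence structure'' --- is precisely that independence assumption in disguise, so you should state it as a hypothesis rather than present the reduction as free; it does hold in the source and in the paper's application, where $A_n$ is a resolvent power of the noise matrix $N$ and $x_n,y_n$ are the vectors $Z_k,Z_j$ independent of $N$. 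Second, your displayed intermediate bound has an exponent slip: the dominant term should read $C'\|A_n\|_F^4/n^4$, not $C'\|A_n\|_F^4/n^2$; as written the chain ending in $\tilde C\|A_n\|^4/n^2$ is not a valid inequality, since $\|A_n\|_F^2$ can be as large as $n\|A_n\|^2$. Your own sentence about the $1/n^4$ from the four second-moment factors contains the correct accounting, and with that correction the Markov and Borel--Cantelli steps (applied unconditionally, after taking expectation over $A_n$ on the truncation event, then letting $K\to\infty$) complete a valid proof.
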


\textbf{Part 2}: To show $\nu_k^\top (t^2 I_p - N^\top N)^{-\alpha} \nu_j \to 0$ a.s. for $\alpha=1,2$, if $k\neq j$, the same technique cannot be used, because the vectors $u_k$ are deterministic. However, it is straightforward to check that the method of \cite{bai2007asymptotics} that we adapted in proving Part 3 of Lemma \ref{qf_lem} extends to proving $\nu_k^\top (t^2 I_p - N^\top N)^{-1} \nu_j \to 0$. Indeed, it is easy to see that all their bounds hold unchanged. In the final step, as a deterministic equivalent for $\nu_k^\top (t^2 I_p - N^\top N)^{-1} \nu_j \to 0$,  one obtains $\nu_k^\top (t^2 I_p - P)^{-1} \nu_j$, where $P$ is the diagonal matrix with $(j,j)$-th entry $\Var{\ep_{ij}^*}$, and this bilinear form tends to 0 by our assumption, showing $\nu_k^\top (t^2 I_p - N^\top N)^{-1} \nu_j \to 0$. Then $\nu_k^\top (t^2 I_p - N^\top N)^{-2} \nu_j \to 0$ follows from the derivative trick employed in Part 3 of Lemma \ref{qf_lem2}. This finishes the proof.

\subsection{The uniform model and the simple form of BLP}
\label{unif_mod}

Here we introduce the \textit{uniform model}, a special case of the linearly transformed model. 
In the uniform model, we have that $\E A_i^\top A_i = m I_p$, and moreover that $\E \ep_i \ep_i^\top  = I_p$. 
This is useful for justifying the simpler form of the BLP that we are using:

\begin{prop}
\label{simple_blp}
In the uniform model, under the assumptions of Theorem \ref{spec}, suppose also that the first eight moments of  the entries of $A_i^\top A_i$ are uniformly bounded. Then the BLP $\hat X_i^{BLP}$ given in \eqref{blp} is asymptotically equivalent to $\hat X_i^0 = \sum_k \ell_k/(1+m\ell_k) \cdot u_k u_k^\top A_i^\top Y_i$, in the sense that $\E |\hat X_i^{BLP} - \hat X_i^0|^2 \to 0$.

\end{prop}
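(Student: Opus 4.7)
The plan is a two-step argument: a Woodbury reduction of the BLP to an inverse of an $r \times r$ matrix, followed by a perturbation step that replaces a random matrix inside that inverse by its mean using the delocalization of the spike eigenvectors.

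I would first write $\Sigma_X = \mathcal{U} L \mathcal{U}^\top$ with $\mathcal{U} = [u_1,\ldots,u_r]$ and $L$ the $r \times r$ diagonal matrix with entries $\ell_k$. Setting $Q_i := \mathcal{U}^\top A_i^\top A_i \mathcal{U}$, the Woodbury identity applied to $(I_{q_i} + A_i \mathcal{U} L \mathcal{U}^\top A_i^\top)^{-1}$, combined with $\Sigma_\ep = I_p$ in the uniform model, yields after a direct simplification
$$\Sigma_X A_i^\top(A_i \Sigma_X A_i^\top + I_p)^{-1} = \mathcal{U}(L^{-1} + Q_i)^{-1} \mathcal{U}^\top A_i^\top,$$
so $\hat X_i^{BLP} = \mathcal{U}(L^{-1}+Q_i)^{-1} \mathcal{U}^\top A_i^\top Y_i$. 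Since $\ell_k/(1+m\ell_k) = (1/\ell_k + m)^{-1}$ and $\E Q_i = m \mathcal{U}^\top \mathcal{U} = m I_r$ (orthonormality of the $u_k$, plus the uniform-model hypothesis $\E A_i^\top A_i = m I_p$), the target predictor rewrites as $\hat X_i^0 = \mathcal{U}(L^{-1} + m I_r)^{-1} \mathcal{U}^\top A_i^\top Y_i$. The problem thus reduces to controlling the effect of replacing $Q_i$ by its mean $m I_r$ inside the $r \times r$ inverse.

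Next, I would use the identity $A^{-1}-B^{-1} = A^{-1}(B-A)B^{-1}$ together with the PSD bound $L^{-1} \preceq L^{-1}+Q_i$ (so that $\|(L^{-1}+Q_i)^{-1}\|_{\op} \le \ell_1$, and likewise for the deterministic matrix $L^{-1}+mI_r$) to obtain
$$\|\hat X_i^{BLP} - \hat X_i^0\|^2 \le \ell_1^4 \|Q_i - m I_r\|_{\op}^2 \|\mathcal{U}^\top A_i^\top Y_i\|^2.$$
By Cauchy-Schwarz it is then enough to prove (i) $\E\|\mathcal{U}^\top A_i^\top Y_i\|^4 = O(1)$ and (ii) $\E\|Q_i - mI_r\|_{\op}^4 \to 0$. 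For (i) I would decompose $\mathcal{U}^\top A_i^\top Y_i = Q_i \mathcal{U}^\top X_i + \mathcal{U}^\top A_i^\top \ep_i$ and use the mutual independence of $A_i, X_i, \ep_i$ along with the 8th-moment assumption on the entries of $A_i^\top A_i$ and the signal moment conditions of Theorem \ref{spec}. For (ii), since $r$ is fixed, $\|\cdot\|_{\op} \le \|\cdot\|_F$, so it suffices to show $\E[((Q_i)_{jk} - m\delta_{jk})^4] \to 0$ for each $j,k$.

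The main obstacle will be this final entrywise fourth-moment estimate. Writing $(Q_i)_{jk} - m\delta_{jk} = \sum_l (D_i(l) - m) u_j(l) u_k(l)$, where $D_i(l)$ are the diagonal entries of the (diagonal) matrix $A_i^\top A_i$, the fourth moment becomes a quadruple sum. When the $D_i(l)$ are independent across $l$, as is natural in the typical uniform-model applications (missing data with independent coordinate selection, etc.), only the two-pair and four-equal partitions survive, and a standard pairing argument combined with the delocalization $\|u_k\|_\infty \to 0$ from the signal assumption of Theorem \ref{spec} gives a bound of order $\max_k \|u_k\|_\infty^4$, which vanishes. Without such coordinatewise independence, the hard part will be to extract enough cancellation from higher cross-moments (or to exploit structural features of the specific application) to keep the bound summable.
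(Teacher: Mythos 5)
Your argument is correct and takes a genuinely different, cleaner route than the paper's. The paper works spike-by-spike: it sets $v_k = A_i u_k$, uses the rank-one Sherman--Morrison formula to peel off each spike from $G = (\sum_j \ell_j v_j v_j^\top + I)^{-1}$, introduces scalar error terms $m_k = v_k^\top G Y_i - (1+m\ell_k)^{-1}v_k^\top Y_i$, and reduces $\E m_k^2 \to 0$ to a collection of scalar moment bounds such as $\E|V^\top v_k|^8 \to 0$ and $\E(u_j^\top E_i u_k)^8 \to 0$. Your approach instead does a single block Woodbury reduction: since $\Sigma_X = \mathcal{U}L\mathcal{U}^\top$ and $\Sigma_\ep = I_p$, the identity $V_i^\top(V_iLV_i^\top + I)^{-1} = (V_i^\top V_i L + I)^{-1}V_i^\top$ with $V_i = A_i\mathcal{U}$ gives $\hat X_i^{BLP} = \mathcal{U}(L^{-1}+Q_i)^{-1}\mathcal{U}^\top A_i^\top Y_i$ directly, after which the resolvent identity and the PSD bound $L^{-1}+Q_i \succeq L^{-1}$ reduce everything to a single operator-norm quantity $\|Q_i - mI_r\|_{\op}$. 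The Cauchy--Schwarz split into $\E\|Q_i - mI_r\|_{\op}^4 \to 0$ and $\E\|\mathcal{U}^\top A_i^\top Y_i\|^4 = O(1)$ is exactly what you want, since these two factors are not independent (both depend on $A_i$) but Cauchy--Schwarz handles that. Both proofs bottom out at essentially the same entrywise computation — moments of $u_j^\top E_i u_k = \sum_l E_i(l)u_j(l)u_k(l)$ controlled by the delocalization of $u_k$ and boundedness of the moments of $E_{ij}$ — but yours packages the linear-algebraic reduction much more tightly and avoids the nested rank-one manipulations.

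Two remarks on the final step. First, your worry about coordinatewise independence of the $E_i(l) = D_i(l) - m$ is not a gap relative to the paper: although the assumption list for Theorem~\ref{spec} states only moment bounds on $E_{ij}$, the proof of Lemma~\ref{op_norm_6} bounds $\|n^{-1/2}E\|$ via Bai--Silverstein's Theorems~5.1/5.8, which require independence of the entries of $E$ across columns, so this independence is part of the paper's working hypotheses; your pairing argument is thus on exactly the same footing as the paper's own moment expansions. Second, when filling in $\E\|\mathcal{U}^\top A_i^\top Y_i\|^4 = O(1)$ you should exploit the independence of $A_i$ from $(z_i,\ep_i)$ to factor $\E[\|Q_i\|_{\op}^4\,\|L^{1/2}z_i\|^4] = \E\|Q_i\|_{\op}^4 \cdot \E\|L^{1/2}z_i\|^4$, since the signal assumption only supplies moments of $z_{ij}$ up to some $m>4$ (so a blind Cauchy--Schwarz requiring 8th moments of $z_i$ would not be available); with this factoring the eight-moment hypothesis is consumed only by $A_i^\top A_i$, consistent with the proposition's statement.
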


The proof is given below:

Recall that we observe $Y_i = A_iX_i+\ep_i$. The BLP has the form $\hat X_i = \Sigma_X A_i^\top(A_i \Sigma_X A_i^\top + \Sigma_\ep)^{-1} Y_i$. Now, $\Sigma_X = \sum_k \ell_k \cdot u_k u_k^\top$, while $\Sigma_\ep = I_p$ in the uniform case. With $G =  ( A_i \sum_k \ell_k \cdot u_k u_k^\top A_i^\top +I_p)^{-1}$, we get that $\hat X_i = \sum_k \ell_k \cdot u_k u_k^\top A_i^\top G Y_i$.

\subsection{Asymptotic BLP after backprojection.}

Our goal is to show that $\hat X_i$ is equivalent to $\hat X_i^0 = \sum_k \ell_k/(1+m\ell_k) \cdot u_k u_k^\top A_i^\top Y_i$, in the sense that $\E |\hat X_i - \hat X_i^0|^2 \to 0$.

Let us denote $v_k =A_i u_{k}$,  and $G_k = (\sum_{j\neq k}\ell_j  v_{j} v_{j}^\top+I)^{-1} $. 

Then $\hat X_i  = \sum_k\ell_k u_k v_k^\top G Y_{i}  $ and $\hat X_i^0  = \sum_k \ell_k/(1+m\ell_k)  \cdot  u_k v_k^\top Y_{i}$. Let us also define $m_k =v_{k}^\top GY_{i}  -1 /(1+m\ell_k) \cdot v_{k}^\top Y_i  $. Then, clearly,  
$$
\hat X_i - \hat X_i^0 = \sum_km_k\cdot\ell_k u_k 
$$
Therefore it is enough to show that $\E m_k^2 \to 0$. 

Using the rank one perturbation formula $u^\top (uu^\top +T)^{-1} = u^\top T^{-1}/(1+ u^\top T^{-1}u)$, we can write
$$
v_k^\top G = v_{k}^\top \left[\sum_{j}\ell_j  v_{j}  v_{j}^\top+I\right]^{-1}
=v_{k}^\top G_k 
/\left(1+  \ell_kv_{k}^\top G_k  v_{k}\right).
$$

In addition, by using the  formula $(VV^\top +I)^{-1} = [I- V (V^\top V +I)^{-1}V^\top]$ for $V=[\ell_1^{1/2} v_{1}, \ldots, \ell_r^{1/2} v_{r}]$ (excluding $v_{k}$), we conclude that 
$$
v_{k}^\top  G_k Y_{i} =
v_{k}^\top \left[\sum_{j\neq k}\ell_j  v_{j}  v_{j}^\top+ I \right]^{-1}Y_{i} 
=  v_{k}^\top Y_{i}  -  v_{k}^\top V (V^\top V +I)^{-1}V^\top Y_{i}.
$$

We thus have
\begin{align*}
m_k &
= \frac{v_{k}^\top G_k Y_{i}}{1+\ell_k v_{k}^\top G_kv_{k}}  
-  \frac{v_{k}^\top Y_{i}}{1+m \ell_k} \\
& = v_{k}^\top Y_{i} \left(\frac{1}{1+\ell_k v_{k}^\top G_kv_{k}}-\frac{1}{1+m \cdot\ell_k}\right)
- \frac{v_{k}^\top V (V^\top V +I)^{-1}V^\top Y_{i}}{1+\ell_k v_{k}^\top G_kv_{k}}.
\end{align*}

Thus, it is enough to show
\benum
\item $\E (v_{k}^\top Y_i)^2\cdot (v_{k}^\top G_kv_{k}-m)^2 \to0$
\item $\E [v_{k}^\top V (V^\top V +I)^{-1}V^\top Y_i]^2 \to 0$
\eenum

We prove these in turn below:

\benum

\item

First, for (1): 

By using the  formula $(VV^\top +I)^{-1} =[I- V (V^\top V + I)^{-1}V^\top]$, we see
$$
v_k^\top G_k v_k =  v_{k}^\top \left[\sum_{j\neq k}\ell_j  v_{j}  v_{j}^\top+I \right]^{-1}v_{k} 
=  v_{k}^\top v_{k} -  v_{k}^\top V (V^\top V + I)^{-1}V^\top v_{k}.
$$
But $x^\top (V^\top V + I)^{-1} x \le x^\top x$, because the eigenvalues of $(V^\top V + I)^{-1}$ are all at most unity. 
Thus, it is enough to show $\E (v_{k}^\top Y_i)^2\cdot |V^\top v_k|^4 \to0$ and $\E (v_{k}^\top Y_i)^2\cdot (v_k^\top v_k -m)^2 \to0$. For this, it is enough to show that $\E |V^\top v_k|^8 \to0$, $\E (v_{k}^\top Y_i)^4 \to0$, and $\E(v_k^\top v_k -m)^4 \to0$.

First we show $\E |V^\top v_k|^8 \to0$. The entries of the $r-1$-dimensional vector $V^\top v_{k}$ are  $ \ell_j^{1/2}v_{j}^\top v_{k} =  \ell_j^{1/2}u_{j}^\top D_i u_{k}$ for $j\neq k$. 
But we have that $\E (u_{j}^\top D_i u_{k})^8 \to 0$. Indeed, since $D_i = m I +E_i$, and $u_k^\top u_j\to 0$, we only need to show that  $\E (u_{j}^\top E_i u_{k})^8 \to 0.$ Since we assumed that $E_{ij}$ has bounded 8th moments, this follows by expanding the moment. The details are omitted for brevity. Thus, under the assumptions made on $E_i$, $\E|V^\top v_k|^8\to0$.

Second, we show that  $\E (v_{k}^\top Y_i)^8 \to0$. Indeed, $v_{k}^\top Y_i=u_k^\top A_i^\top (A_i\sum_j \ell_j^{1/2} z_{ij} u_j +\ep_i)$. We can take the expectation over $z_{ij}$ and $\ep_i$, because they are independent from $v_k$. We obtain $\E (v_{k}^\top Y_i)^{8} =  \sum_j \ell_j^4 \E (u_k^\top D_i u_j)^{8}+\E (u_k^\top D_i u_k)^4$. Similarly to above, each term converges to zero. 

Third, and finally, we show $\E(v_k^\top v_k -m)^4 \to0$. Indeed, $v_k^\top v_k -m = u_{k}^\top D_i u_{k} - m \cdot u_k^\top u_k = u_{k}^\top (D_i  - mI) u_{k}$, so 
 the claim is equivalent to $\E (u_k^\top E_i u_k)^4 \to 0$, which follows as above. 
This finishes the proof.

\item 

By using $x^\top (V^\top V + I)^{-1} x \le x^\top x$, it is enough to show $\E [v_{k}^\top V V^\top Y_i]^2 \to 0$. As above, this can be achieved by taking the expectation over $z_{ij}$ and $\ep_i$ first, and then controlling $v_{k}^\top V$. The details are omitted for brevity. 
\eenum

This finishes the proof that BLP is equivalent to simple linear denoising.

\subsection{Proof of Lemma \ref{hat_M}}
\label{hat_M_pf}
We will show that the operator norm of $\hat{M}^{-1} - M^{-1}$ converges to 0; since the operator norm of the data matrix $B$ converges almost surely by the main theorem, the result follows. This is equivalent to showing that $\sup_i|M_i^{-1} - \hat{M}_i^{-1}| \to 0$, which, since the $M_i$ are uniformly bounded away from 0, will follow if we show $\sup_i|M_i - \hat{M}_i| \to 0$ almost surely.

To show this, observe that by the Central Absolute Moment Inequality \citep[see, for example,][]{mukhopadhyay2000probability} and the moment condition on $D_{ij}$, there is an absolute constant $C$ such that
$$
    \mathbb{E}|\hat{M}_i - M_i|^{4+\phi} \le C n^{-(2+\phi/2)}
$$

for all $i=1,\dots,p$. Therefore, for any $a > 0$,
\begin{align*}
    \text{Pr}(\sup_i|\hat{M}_i - M_i|  \ge a) 
    & \le \sum_{i=1}^n \text{Pr}(|\hat{M}_i - M_i|  \ge a)
        \nonumber \\
    & \le C \cdot n \cdot \frac{\mathbb{E}|\hat{M}_i - M_i|^{4+\phi}}{a^{4+\phi}}
        \nonumber \\
    & \le C a^{-(4+\phi)} n^{-(1+\phi/2)}.
\end{align*}
Since this is summable, it follows that $\sup_i|\hat{M}_i - M_i| \to 0$ almost surely, as desired.

\subsection{Derivation of optimal singular values and AMSE}
\label{sv-derived}
The derivation of the optimal singualar values and the AMSE is a summary of what is found in \cite{gavish-donoho-2017}. We provide it here for the reader's convenience.

\begin{prop}
There exist orthonormal bases $a_1,\dots,a_p \in \mathbb{R}^p$ and $a_1^\prime,\dots,a_n^\prime \in \mathbb{R}^n$ in which $n^{-1/2}X$ and $n^{-1/2}\hat{X}$ are jointly in block-diagonal form, with $r$ 2-by-2 blocks. More precisely, there are orthogonal matrices $A_1$ and $A_2$ such that $A_1 \hat X A_2^\top = \oplus_{i=1}^r C_i$ and $A_1 X A_2^\top = \oplus_{i=1}^r D_i$, where $C_i$ and $D_i$ are 2-by-2 matrices given by
\begin{align*}
    C_i =         
    \left(
    \begin{array}{c c}
    \lambda_i  c_i \tilde{c}_i & \lambda_i  c_i \tilde{s_i}     \\
    \lambda_i  \tilde{c}_i s_i& \lambda_i  s_i\tilde{s}_i        
    \end{array}
    \right), \,\,\,\,\,\,\,\,\,
    D_i = 
    \left(
    \begin{array}{c c}
    \ell_i^{1/2} &  0\\
      0         &  0
    \end{array}
    \right).
\end{align*}
\end{prop}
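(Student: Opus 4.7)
The plan is to construct the promised orthonormal bases explicitly from the singular vectors of $n^{-1/2}X$ and $n^{-1/2}\hat{X}$, leveraging the asymptotic orthogonality established in Thm.\ \ref{spec}. Write $n^{-1/2}X = \sum_{k=1}^r \ell_k^{1/2} Z_k u_k^\top$, where $Z_k = n^{-1/2}(z_{1k},\ldots,z_{nk})^\top$, and $n^{-1/2}\hat{X} = \sum_{k=1}^r \lambda_k \hat{v}_k \hat{u}_k^\top$, where $\hat{v}_k, \hat{u}_k$ are the top left and right singular vectors of $\tilde{B}$. The hypotheses give orthonormality of $\{u_k\}$, $\{\hat{u}_k\}$, $\{\hat{v}_k\}$, and by the law of large numbers $Z_k^\top Z_l = n^{-1}\sum_i z_{ik}z_{il} \to \delta_{kl}$ a.s. The essential input from Thm.\ \ref{spec} is the cross-orthogonality $u_j^\top \hat{u}_k \to 0$ and $Z_j^\top \hat{v}_k \to 0$ a.s.\ for $j \neq k$.

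For each $k$, set $V_k = \mathrm{span}(Z_k, \hat{v}_k) \subset \mathbb{R}^n$ and $U_k = \mathrm{span}(u_k, \hat{u}_k) \subset \mathbb{R}^p$. The cross-orthogonality relations imply that the $V_k$ are pairwise orthogonal across $k$, and the same for the $U_k$. In each $V_k$, choose the orthonormal pair $\{Z_k, Z_k^\perp\}$ with sign so that $\hat{v}_k = \tilde{c}_k Z_k + \tilde{s}_k Z_k^\perp$ and $\tilde{s}_k \geq 0$; analogously pick $\{u_k, u_k^\perp\}$ in $U_k$ with $\hat{u}_k = c_k u_k + s_k u_k^\perp$. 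Concatenate these pairs over $k$ and extend by Gram-Schmidt to full orthonormal bases $a_1,\ldots,a_p$ and $a_1',\ldots,a_n'$. Then the computation is direct: the term $\ell_k^{1/2} Z_k u_k^\top$ lives in $V_k \times U_k$ and becomes $\ell_k^{1/2}\, e_1 e_1^\top$ in that block, giving $D_k$; cross-contributions from $l \neq k$ vanish because $Z_l \perp V_k$ and $u_l \perp U_k$. Likewise $\lambda_k \hat{v}_k \hat{u}_k^\top$ lives in $V_k \times U_k$, and in the chosen basis equals $\lambda_k (\tilde{c}_k, \tilde{s}_k)^\top (c_k, s_k)$, matching $C_k$ (up to the labeling convention between $c_k$ and $\tilde{c}_k$).

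The main obstacle is that the cross-orthogonality holds only in the $n,p \to \infty$ limit, so the joint block-diagonal form is exact only asymptotically; in finite samples the off-block entries are $o(1)$ error terms. This is sufficient for the downstream AMSE computation in Sec.\ \ref{sec:svshrink}, which needs only the limiting structure in order to minimize $\|C_k - D_k\|_F^2$ block by block and sum, yielding the optimal singular values $\lambda_k^* = \ell_k^{1/2} c_k \tilde{c}_k$ and the limiting MSE $\sum_{k=1}^r \ell_k(1 - c_k^2 \tilde{c}_k^2)$. A minor bookkeeping point is the sign convention on $Z_k^\perp$ and $u_k^\perp$ (chosen to ensure $\tilde{s}_k, s_k \geq 0$), which is routine and does not affect the block-diagonal conclusion.
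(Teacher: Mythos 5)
Your proposal is correct and is, in substance, the argument the paper intends: the paper's own ``proof'' is a one-line citation to \cite{gavish-donoho-2017}, whereas you spell out the elementary linear algebra. The key ingredients you isolate are exactly the right ones: orthonormality of $\{u_k\}$, $\{\hat u_k\}$, $\{\hat v_k\}$; the law-of-large-numbers near-orthonormality of the $Z_k$; and the cross-orthogonality $u_j^\top\hat u_k \to 0$, $Z_j^\top\hat v_k\to 0$ ($j\neq k$) from Thm.\ \ref{spec}. Using these to build orthonormal pairs spanning each $U_k = \mathrm{span}(u_k,\hat u_k)$ and $V_k = \mathrm{span}(Z_k,\hat v_k)$, and then extending to full bases, is precisely the Gavish--Donoho construction.

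You are also right to flag what the paper leaves implicit: for finite $n,p$ the subspaces $U_k$ (resp.\ $V_k$) are only approximately pairwise orthogonal, so the joint $2\times 2$ block-diagonal form holds up to $o(1)$ off-block entries rather than exactly. The proposition is used only to compute $\L_\infty(\lambda_1,\dots,\lambda_r) = \lim_{n,p\to\infty} n^{-1}\|\hat X - X\|_F^2$, and since the off-block contributions vanish in the limit, your construction is adequate for that downstream purpose. It would be cleaner to state the proposition as an asymptotic decomposition (or to pass to the limiting sines and cosines before invoking it), but this is a presentational point rather than a gap in your argument.

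One small bookkeeping item: your $C_k = \lambda_k(\tilde c_k,\tilde s_k)^\top(c_k,s_k)$ is the transpose of the $C_i$ displayed in the statement, reflecting a swap between the roles of the left-singular-vector angles $(\tilde c,\tilde s)$ and the right-singular-vector angles $(c,s)$. Since $D_i$ is symmetric, $\|C_i - D_i\|_F^2$ is unchanged under this transpose, so the optimal $\lambda_i^* = \ell_i^{1/2} c_i\tilde c_i$ and the AMSE $\sum_i \ell_i(1 - c_i^2\tilde c_i^2)$ are unaffected; still, it is worth fixing the row/column labeling so that rows index the left ($\mathbb{R}^n$) side and columns the right ($\mathbb{R}^p$) side consistently with $A_1\in O(n)$, $A_2\in O(p)$.
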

\begin{proof}
The proof, which is elementary linear algebra, is essentially contained in \cite{gavish-donoho-2017}.
\end{proof}

Since the rank $r$ is fixed, and since the sines and cosines converge almost surely, it follows immediately that the quantity 
$$
    \L_\infty(\lambda_1,\dots,\lambda_r) = \lim_{p,n\to\infty} n^{-1}\|\hat{X} - X \|_F^2
$$
is well-defined, where $\hat{X}$ is the estimator such that $n^{-1} \hat{X}^\top\hat{X}$ has eigenvalues $\lambda_1,\dots,\lambda_r$; and furthermore, since the squared Frobenius norm decomposes over blocks, we have:
$$
    \L_\infty(\lambda_1,\dots,\lambda_r) = \sum_{i=1}^r \|C_i - D_i \|_F^2
$$
Consequently, the optimal $\lambda_i$ is found by optimizing a single spike for a 2-by-2 block:
\begin{align}
\label{eq-tstar}
    \lambda_i^* = \operatorname*{arg\,min}_\lambda \|C_i - D_i\|_F^2.
\end{align}

To solve for $\lambda_i^*$ and find the AMSE, we write out the error explicitly:
\begin{align*}
\|C_i - D_i\|_F^2 = \lambda_i^2 - 2 \ell_i^{1/2} c_i \tilde{c}_i + \ell_i
\end{align*}
which is minimized at $\lambda_i^* = \ell_i^{1/2}c_i \tilde{c}_i$, and has minimum value $\ell_i(1 - c_i \tilde{c}_i)$. The total AMSE is therefore $\sum_{i=1}^r\ell_i(1 - c_i \tilde{c}_i)$.

\subsection{Proof of Thm.\ \ref{oos-prop}}
\label{oos-proof}

All that remains to show is that the out-of-sample $W$-AMSE is equal to the in-sample $W$-AMSE, when $W = \Sigma_{\ep}^{-1/2}$. That is, we must show
\begin{align*}
    \sum_{i=1}^r 
        \left( \tilde{\ell}_i
        - \frac{\tilde{\ell}_i^2 c_i^4}{\tilde{\ell}_i c_i^2 + 1} \right)
    = \sum_{k=1}^r \tilde{\ell}_i (1 - c_i^2 \tilde{c}_i^2).
\end{align*}
We will prove equality of the individual summands; denoting $\ell = \tilde{\ell}_i$, $c = c_i$ and $\tilde{c} = \tilde{c}_i$, this means showing 
\begin{math}
    \ell - \frac{ \ell^2 c_k^4}{\ell c^2 + 1} 
    =  \ell (1 - c^2 \tilde{c}^2).
\end{math}
Straightforward algebraic manipulation shows that this is equivalent to showing
\begin{math}
    1 / \tilde{c}^2 = 1 + 1 / (\ell c^2).
\end{math}
Substituting formulas \eqref{cos_inner} and \eqref{cos_outer}, we have:
\begin{align*}
    1 / \tilde{c}^2 = \frac{1 + 1/\ell}{1 - \gamma / \ell^2}
                    = \frac{\ell + 1}{\ell - \gamma/\ell}
                    = 1 + \frac{1 + \gamma/\ell}{\ell - \gamma/\ell}
                    = 1 + \frac{1}{\ell}\frac{1 + \gamma/\ell}{1 - \gamma/\ell^2}
    = 1 + 1 / (\ell c^2)
\end{align*}
as desired.

{\small
\setlength{\bibsep}{0.0pt plus 0.0ex}
\bibliographystyle{plainnat-abbrev}
\bibliography{references}
}

\end{document}